\theoremstyle{plain}
\newtheorem{theorem}{Theorem}[section]
\newtheorem{lemma}[theorem]{Lemma}
\newtheorem{proposition}[theorem]{Proposition}
\newtheorem{corollary}[theorem]{Corollary}
\newtheorem{question}[theorem]{Question}
\theoremstyle{definition}
\newtheorem{definition}[theorem]{Definition}
\newtheorem*{notation}{Standing Notations}
\newtheorem{remark}[theorem]{Remark}
\newtheorem*{acknowledgments}{Acknowledgments}
\newcommand{\bZ}{\mathbbm{Z}}\newcommand{\bQ}{\mathbbm{Q}}
\newcommand{\bC}{\mathbbm{C}}
\newcommand{\z}{\zeta}
\title[{Birational classification of fields of invariants for groups of order $128$}]
{Birational classification of fields of invariants\\ for groups of order $128$}
\author{Akinari Hoshi}
\keywords{Noether's problem, rationality problem, birational classification, 
unramified Brauer group, unramified cohomology group, 
retract rationality, isoclinism family}
\thanks{This work was partially supported by JSPS KAKENHI Grant Number 25400027. 
A part of this work was done during a visit to 
National Taiwan University,  the National Center for Theoretic Sciences 
(Taipei Office), whose support is gratefully acknowledged.}
\subjclass[2010]{Primary  12F12, 13A50, 14E08, 14F22, 16K50, 20C10.}
\begin{document}
\maketitle

\begin{abstract}
Let $G$ be a finite group 
acting on the rational function field $\bC(x_g : g\in G)$ by 
$\bC$-automorphisms $h(x_g)=x_{hg}$ for any $g,h\in G$. 
Noether's problem asks whether the invariant field $\bC(G)=k(x_g : g\in G)^G$ 
is rational (i.e. purely transcendental) over $\bC$. 
By Fischer's theorem, $\bC(G)$ is rational over $\bC$ when $G$ 
is a finite abelian group. 
Saltman and Bogomolov, respectively, showed that for any prime $p$ 
there exist groups $G$ of order $p^9$ and of order $p^6$ such that 
$\bC(G)$ is not rational over $\bC$ 
by showing the non-vanishing of the unramified Brauer group: 
${\rm Br}_{\rm nr}(\bC(G))\neq 0$, 
which is an avatar of the birational invariant $H^3(X,\bZ)_{\rm tors}$ 
given by Artin and Mumford where $X$ is a smooth projective complex variety 
whose function field is $\bC(G)$. 
For $p=2$, 
Chu, Hu, Kang and Prokhorov proved that 
if $G$ is a $2$-group of order $\leq 32$, then $\bC(G)$ is rational over $\bC$. 
Chu, Hu, Kang and Kunyavskii showed that 
if $G$ is of order $64$, then $\bC(G)$ is rational over $\bC$ 
except for the groups $G$ belonging to the two isoclinism families  
$\Phi_{13}$ with ${\rm Br}_{\rm nr}(\bC(G))=0$ and 
$\Phi_{16}$ with ${\rm Br}_{\rm nr}(\bC(G))\simeq C_2$.  
Bogomolov and B\"ohning's theorem claims that if $G_1$ and $G_2$ 
belong to the same isoclinism family, then $\bC(G_1)$ and $\bC(G_2)$ 
are stably $\bC$-isomorphic. 
We investigate the birational classification of $\bC(G)$ for groups $G$ 
of order $128$ with ${\rm Br}_{\rm nr}(\bC(G))\neq 0$. 
Moravec showed that 
there exist exactly $220$ groups $G$ of order $128$ with 
${\rm Br}_{\rm nr}(\bC(G))\neq 0$ forming $11$ isoclinism families $\Phi_j$. 
We show that if $G_1$ and $G_2$ belong to 
$\Phi_{16}$, $\Phi_{31}$, $\Phi_{37}$, $\Phi_{39}$, $\Phi_{43}$, 
$\Phi_{58}$, $\Phi_{60}$ or $\Phi_{80}$ (resp. $\Phi_{106}$ or $\Phi_{114}$), 
then $\bC(G_1)$ and $\bC(G_2)$ are stably $\bC$-isomorphic 
with ${\rm Br}_{\rm nr}(\bC(G_i))\simeq C_2$. 
Explicit structures of non-rational fields $\bC(G)$ are given 
for each cases including also the case $\Phi_{30}$ with 
${\rm Br}_{\rm nr}(\bC(G))\simeq C_2\times C_2$. 
\end{abstract}

\section{Introduction}
Let $k$ be a field and 
$G$ be a finite group acting on the rational function field 
$k(x_g : g\in G)$ by $k$-automorphisms $h(x_g)=x_{hg}$ for any $g,h\in G$. 
We denote the fixed field $k(x_g : g\in G)^G$ by $k(G)$. 
Emmy Noether \cite{Noe13, Noe17} asked 
whether $k(G)$ is rational (= purely transcendental) over $k$. 
This is called Noether's problem for $G$ over $k$, 
and is related to the inverse Galois problem,
to the existence of generic $G$-Galois extensions over $k$, and
to the existence of versal $G$-torsors over $k$-rational field extensions
(see Saltman \cite{Sal82a}, Swan \cite{Swa83}, 
Manin and Tsfasman \cite{MT86}, 
Garibaldi, Merkurjev and Serre \cite[Section 33.1, page 86]{GMS03}), 
Colliot-Th\'el\`ene and Sansuc \cite{CTS07}). 

\begin{theorem}[{Fischer \cite{Fis15}, see also Swan \cite[Theorem 6.1]{Swa83}}]\label{thFis}
Let $G$ be a finite abelian group with exponent $e$. 
Assume that {\rm (i)} either {\rm char} $k=0$ or {\rm char} $k=p$ with 
$p$ ${\not |}$ $e$, and 
{\rm (ii)} $k$ contains a primitive $e$-th root of unity. 
Then $k(G)$ is $k$-rational. In particular, $\bC(G)$ is $\bC$-rational.
\end{theorem}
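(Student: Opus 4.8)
The plan is to reduce Noether's problem for an abelian $G$ to a question about multiplicative (toric) actions, which can be settled by integral linear algebra. First I would diagonalize the regular permutation action. Since $G$ is abelian, its order and its exponent $e$ have the same prime divisors, so $p\nmid e$ forces $p\nmid|G|$; hence $kG$ is semisimple and, as $\mu_e\subset k$, all of its irreducible characters are one-dimensional with values in $k$. Concretely I would put $y_\chi=\sum_{g\in G}\chi(g)^{-1}x_g$ for $\chi$ in the character group $\widehat{G}$. The transition matrix is essentially the character table of $G$; its rows are the $|G|$ distinct characters $g\mapsto\chi(g)^{-1}$, which are $k$-linearly independent by Dedekind's independence of characters, so it is invertible over $k$ and $F:=k(x_g:g\in G)=k(y_\chi:\chi\in\widehat{G})$, with the new generators again algebraically independent over $k$. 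A one-line computation gives $h(y_\chi)=\chi(h)\,y_\chi$ for all $h\in G$ and $\chi\in\widehat{G}$, so the $G$-action on $F$ is now purely diagonal by roots of unity.

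Next I would read off the $G$-invariant Laurent monomials. For $a=(a_\chi)\in\bZ^{\widehat{G}}$ set $y^a=\prod_\chi y_\chi^{a_\chi}$; then $h(y^a)=\bigl(\prod_\chi\chi(h)^{a_\chi}\bigr)y^a$, so $y^a$ is $G$-invariant precisely when $\prod_\chi\chi^{a_\chi}=1$ in $\widehat{G}$, i.e.\ when $a$ lies in the kernel $L$ of the summation homomorphism $\pi\colon\bZ^{\widehat{G}}\twoheadrightarrow\widehat{G}$, $e_\chi\mapsto\chi$. Because $\pi$ is onto with finite cokernel $\widehat{G}$, the sublattice $L\subset\bZ^{\widehat{G}}$ has full rank $m:=|\widehat{G}|=|G|$. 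Choosing a $\bZ$-basis $a^{(1)},\dots,a^{(m)}$ of $L$ and setting $u_i=y^{a^{(i)}}$, the elements $u_i$ are $G$-invariant and algebraically independent over $k$, so $k(u_1,\dots,u_m)$ is a $k$-rational subfield of $F^G=k(G)$ of the correct transcendence degree $m$.

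The crux, and the step I expect to be the main obstacle, is showing that these obvious invariants already exhaust the invariant field, i.e.\ $k(G)=k(u_1,\dots,u_m)$. I would argue by matching degrees. On one hand, the $G$-action on $F$ is faithful (it is just the original regular action), so Artin's theorem gives $[F:k(G)]=|G|=m$. On the other hand, putting $L$ into Smith normal form relative to $\bZ^{\widehat{G}}$ produces coordinates $z_1,\dots,z_m$ with $F=k(z_1,\dots,z_m)$ and, after choosing the corresponding basis of $L$, $k(u_1,\dots,u_m)=k(z_1^{d_1},\dots,z_m^{d_m})$ for positive integers $d_1,\dots,d_m$ with $\prod_i d_i=[\bZ^{\widehat{G}}:L]=|\widehat{G}|=m$; since each $T^{d_i}-z_i^{d_i}$ is irreducible over the relevant intermediate field (a rational function field with a transcendental $d_i$-th power adjoined), a telescoping tower of extensions gives $[F:k(u_1,\dots,u_m)]=\prod_i d_i=m$. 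As $k(u_1,\dots,u_m)\subseteq k(G)$ and the two extension degrees coincide, the inclusion must be an equality, so $k(G)$ is $k$-rational; and since $\mu_e\subset\bC$ holds automatically, $\bC(G)$ is $\bC$-rational.

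As a remark, the same conclusion can be reached without Smith normal form by inducting on the number of cyclic factors of $G$: after diagonalizing, adjoining the eigenvector $y_\chi$ attached to a generating character of order $n$ contributes exactly one transcendental $y_\chi^{\,n}$ together with the invariant ratios $y_{\chi^j}/y_\chi^{\,j}$, and thereby reduces the problem to the quotient of $G$ by that cyclic factor. I would nonetheless present the lattice version above, since it cleanly isolates the one place where $\mu_e\subset k$ and $p\nmid|G|$ enter — namely the diagonalization — from the purely combinatorial rationality of the quotient torus.
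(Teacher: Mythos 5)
Your argument is correct, and it is essentially the classical proof: the paper itself gives no proof of this theorem, citing Fischer and Swan \cite[Theorem 6.1]{Swa83}, and the cited argument is exactly your two-step reduction — diagonalize the regular representation via the characters (which is where $\mu_e\subset k$ and $p\nmid e\Rightarrow p\nmid |G|$ are used) and then observe that the invariant field of the resulting diagonal action is generated by the Laurent monomials attached to a full-rank sublattice of $\bZ^{\widehat{G}}$, hence is purely transcendental. Your degree count via Smith normal form, matched against $[F:F^G]=|G|$ from Artin's theorem, is a clean and complete way to justify the one step usually left implicit, namely that the monomial invariants already exhaust $k(G)$.
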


\begin{theorem}[Kuniyoshi \cite{Kun54}, \cite{Kun55}, \cite{Kun56}, 
see also Gasch\"utz \cite{Gas59}] \label{thKun}
Let $k$ be a field with ${\rm char}$ $k=p>0$ and $G$ be a finite
$p$-group. Then $k(G)$ is $k$-rational.
\end{theorem}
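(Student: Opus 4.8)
The plan is to reduce the problem to a statement about \emph{unipotent triangular} group actions and then settle that by induction on the number of variables. First I would identify $k(G)$ with $k(V)^G$, where $V=kG$ is the regular representation: the $k$-span of the indeterminates $x_g$, with $h\cdot x_g=x_{hg}$, is exactly $kG$ as a left $kG$-module. Since $\mathrm{char}\,k=p$ and $G$ is a $p$-group, the group algebra $kG$ is a local ring whose only simple module is the trivial module $k$, so $V$ admits a composition series $0=V_0\subset V_1\subset\dots\subset V_N=V$ with $N=|G|$ and every quotient $V_i/V_{i-1}$ isomorphic to the trivial module $k$. Picking $v_i\in V_i\smallsetminus V_{i-1}$ produces a basis of $V$ in which $G$ acts by upper unitriangular matrices: $g(v_1)=v_1$ and $g(v_{i+1})\in v_{i+1}+kv_1+\dots+kv_i$ for all $g$. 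Hence it suffices to show: whenever $G$ acts on $L=k(v_1,\dots,v_N)$ so that each $F_i:=k(v_1,\dots,v_i)$ is $G$-stable, $g(v_1)=v_1$, and $g(v_{i+1})\in v_{i+1}+F_i$, the fixed field $L^G$ is $k$-rational.

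I would prove this by induction on $N$, with the crucial step being the following: let $F$ be a field with a $G$-action such that $F^G$ is already $k$-rational, and let $F(x)$ be a $G$-stable one-variable extension on which $G$ acts by $g(x)=x+b_g$, where $b\colon G\to F^+$ is (automatically) a $1$-cocycle; then $F(x)^G$ is $k$-rational over $F^G$. To prove it, let $N\trianglelefteq G$ be the kernel of the action on $F$. Then $b|_N$ is a homomorphism $N\to F^+$; after factoring out its kernel (which acts trivially on all of $F(x)$), the image $B:=b(N)$ is a finite, hence $\mathbb{F}_p$-linear, additive subgroup of $F$, and one checks that $B$ is $G$-stable. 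The characteristic-$p$ hypothesis now enters decisively: the monic polynomial $f(T):=\prod_{\beta\in B}(T-\beta)$ is an \emph{additive} ($\mathbb{F}_p$-linear) polynomial whose coefficients are $G$-invariant, so for $y:=f(x)$ one gets $F(x)^N=F(y)$ and $g(y)=f(x+b_g)=y+f(b_g)$. Thus $G/N$ acts on $F(y)$ by the affine rule $y\mapsto y+f(b_g)$ and acts faithfully on $F$, so $F/F^G$ is finite Galois with group $G/N$; additive Hilbert~90 (equivalently the normal basis theorem) gives $f(b_g)=g(e)-e$ for some $e\in F$, whence $y-e$ is $G$-invariant and $F(x)^G=\bigl(F(y)\bigr)^{G/N}=F^G(y-e)$ is $k$-rational over $F^G$. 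Feeding this through the induction, starting from $F_1^G=k(v_1)$, gives the $k$-rationality of $L^G$, hence of $k(G)$.

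The step I expect to be the real obstacle is handling the \emph{non-faithfulness} of the action on the smaller function field that appears at each inductive stage: one cannot simply invoke Hilbert~90 when $G$ fails to act faithfully on $F$, and in characteristic $0$ this is not a technicality but a genuine failure — a finite group admits no nontrivial action by translations on a characteristic-$0$ vector space, and indeed the theorem is false over $\mathbb{Q}$. The device that repairs it, replacing $x$ by the value at $x$ of the additive polynomial that vanishes precisely on the relevant $\mathbb{F}_p$-subspace, is the conceptual core of the Kuniyoshi--Gasch\"utz argument. The one computation I would actually carry out in detail is the verification that this $f$ is $\mathbb{F}_p$-linear with $G$-invariant coefficients (so that the residual action of $G/N$ is again affine over the same base field $F$); the remaining ingredients — the reduction to the regular representation, the composition series of the local algebra $kG$, and the bookkeeping of the induction — are routine.
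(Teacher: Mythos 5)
Your argument is correct and complete; note that the paper itself gives no proof of this theorem but merely cites Kuniyoshi and Gasch\"utz, and what you have written is essentially the classical Gasch\"utz proof from those references: filter the regular representation $kG$ (local algebra, unique simple module) to get a unitriangular action, then descend one variable at a time using the additive polynomial $\prod_{\beta\in B}(T-\beta)$ killing the translation subgroup and additive Hilbert 90 for the residual cocycle. The one point worth making explicit is that the conclusion of your one-variable lemma, $F(x)^G=F^G(y-e)$, is a statement about fields and so is unaffected by the passage to quotients of $G$ made inside its proof, which is why the induction closes up cleanly.
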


We now recall some relevant definitions of $k$-rationality of fields. 

\begin{definition}\label{defr}
Let $K/k$ and $L/k$ be finite generated extensions of fields.\\ 
{\rm (1)} $K$ is said to be {\it rational} over $k$ 
(for short, $k$-{\it rational}) if $K$ is purely transcendental over $k$, 
i.e. $K\simeq k(x_1,\ldots,x_n)$ for some algebraically independent elements 
$x_1,\ldots,x_n$ over $k$;\\
{\rm (2)} $K$ is said to be {\it stably $k$-rational} if $K(y_1,\ldots,y_m)$ is 
$k$-rational for some algebraically independent elements 
$y_1,\ldots,y_m$ over $K$;\\ 
{\rm (3)} $K$ and $L$ are said to be {\it stably $k$-isomorphic} 
if $K(y_1,\ldots,y_m)\simeq L(z_1,\ldots,z_n)$ 
for some algebraically independent elements 
$y_1,\ldots,y_m$ over $K$ and $z_1,\ldots,z_n$ over $L$;\\
{\rm (4)} (Saltman, \cite[Definition 3.1]{Sal84b}) 
$K$ is said to be 
{\it retract $k$-rational} if there exists a $k$-algebra
$A$ contained in $K$ such that (i) $K$ is the quotient field of
$A$, (ii) there exist a non-zero polynomial $f\in
k[x_1,\ldots,x_n]$ and $k$-algebra homomorphisms $\varphi\colon A\to
k[x_1,\ldots,x_n][1/f]$ and $\psi\colon k[x_1,\ldots,x_n][1/f]\to
A$ satisfying $\psi\circ\varphi =1_A$;\\ 
{\rm (5)} $K$ is said to be {\it $k$-unirational} 
if $k\subset K\subset k(x_1,\ldots,x_n)$ for some integer $n$. 
\end{definition}

In Saltman's original definition of retract $k$-rationality 
(\cite[page 130]{Sal82b}, \cite[Definition 3.1]{Sal84b}), 
a base field $k$ is required to be infinite in order to 
guarantee the existence of sufficiently many $k$-specializations. 
We now assume that $k$ is an infinite field. 
Then if $K$ and $L$ are stably $k$-isomorphic and $K$ is retract $k$-rational, 
then $L$ is also retract $k$-rational (see \cite[Proposition 3.6]{Sal84b}), 
and it is not difficult to verify the following implications:
\begin{center}
$k$-rational\ \ $\Rightarrow$\ \ 
stably $k$-rational\ \ $\Rightarrow$\ \ 
retract $k$-rational\ \ $\Rightarrow$\ \ 
$k$-unirational. 
\end{center}
Note that $k(G)$ is retract $k$-rational 
if and only if there exists a generic $G$-Galois extension over $k$ 
(see \cite[Theorem 5.3]{Sal82a}, \cite[Theorem 3.12]{Sal84b}). 
In particular, if $k$ is a Hilbertian field, e.g. number field, 
and $k(G)$ is retract $k$-rational, then inverse Galois problem 
for $G$ over $k$ has a positive answer, i.e. 
there exists a Galois extension $K/k$ with ${\rm Gal}(K/k)\simeq G$.


Swan \cite{Swa69} gave the first negative solution to Noether's problem. 
He proved that if $p=47$, $113$ or $233$, 
then  $\bQ(C_p)$ is not $\bQ$-rational, 
where $C_p$ is the cyclic group of order prime $p$, 
by using Masuda's idea of Galois descent \cite{Mas55, Mas68}. 

Noether's problem for abelian groups was studied
extensively by Masuda, Kuniyoshi, Swan, Voskresenskii, Endo and Miyata, etc. 
Eventually, Lenstra \cite{Len74} gave a necessary and sufficient condition 
to Noether's problem for finite abelian groups. 
For details, see Swan's survey paper \cite{Swa83}, 
Voskresenskii's book \cite[Section 7]{Vos98} or \cite{Hos}. 
On the other hand, just a handful of results about 
Noether's problem are obtained when the groups are non-abelian.  

\begin{theorem}[{Maeda \cite[Theorem, page 418]{Mae89}}]
Let $k$ be a field and $A_5$ be the alternating group of degree $5$. 
Then $k(A_5)$ is $k$-rational. 
\end{theorem}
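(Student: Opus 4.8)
The plan is to combine a standard application of the no-name lemma with an explicit birational analysis of the discriminant of the generic quintic. \emph{Step 1 (reduction to the permutation action).} Let $V_{\mathrm{reg}}=k[A_5]$ be the regular representation, so that $k(A_5)=k(V_{\mathrm{reg}})^{A_5}$, and let $V=k[A_5/A_4]$ be the $5$-dimensional permutation representation, so that $k(V)^{A_5}=k(x_1,\dots,x_5)^{A_5}$ for the usual permutation action. Then $V$ is faithful, and when $\mathrm{char}\,k\nmid 12$ (in particular when $\mathrm{char}\,k=0$) it is a direct summand of $V_{\mathrm{reg}}$: induce up a splitting $k[A_4]=k\oplus(\cdots)$. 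Writing $V_{\mathrm{reg}}\cong V\oplus W$ and applying the no-name lemma, $k(A_5)$ is rational over $k(V)^{A_5}$, so it suffices to prove that $k(x_1,\dots,x_5)^{A_5}$ is $k$-rational. (The characteristics $2$ and $3$ need modifications at several points below, which I do not address here.)

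\emph{Step 2 (the discriminant double cover, and two dimension reductions).} Since $[S_5:A_5]=2$ and $k(x_1,\dots,x_5)^{S_5}=k(e_1,\dots,e_5)$, one has $k(x_1,\dots,x_5)^{A_5}=k(e_1,\dots,e_5)(\Delta)$ with $\Delta=\prod_{i<j}(x_i-x_j)$ and $\Delta^2=\delta$, the discriminant of the generic quintic $x^5-e_1x^4+\cdots-e_5$; so this field is the function field of the double cover $\{y^2=\delta\}$ of $\mathbb{A}^5$ ramified along the discriminant. As $\delta$ is translation invariant, the substitution $x_i\mapsto x_i-e_1/5$ (valid for $\mathrm{char}\,k\ne5$) shows $\delta$ depends only on the coefficients $z_1,z_2,z_3,z_4$ of the associated depressed quintic $x^5+z_1x^3+z_2x^2+z_3x+z_4$; hence $e_1$ splits off as a free variable and we are reduced to showing $k(z_1,z_2,z_3,z_4)(\Delta)$, with $\Delta^2=D(z_1,z_2,z_3,z_4)$, is $k$-rational. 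The scaling $x\mapsto\lambda x$ moreover induces a generically free $\mathbb{G}_m$-action of weights $(2,3,4,5,10)$ on $(z_1,z_2,z_3,z_4,\Delta)$; since $\mathbb{G}_m$ is special the associated fibration is generically trivial, so this field is rational over its degree-zero subfield, namely the function field of the double cover of the rational threefold $\mathbb{P}(2,3,4,5)$ ramified along the image of $\{D=0\}$. It therefore suffices to prove that this threefold is $k$-rational.

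\emph{Step 3 (the main obstacle) and conclusion.} This last point is the crux. The ramification surface is a singular surface of high degree, so no general rationality criterion for double covers applies and one must use the special geometry of the discriminant. My plan is to realize the double cover as a fibration in rational curves — ideally a conic bundle with a rational section — over a rational surface; the natural tool is the resolvent theory of the quintic together with the isomorphism $A_5\cong\mathrm{PSL}_2(5)$ (equivalently the exotic embedding $A_5\hookrightarrow A_6$ furnished by the metacyclic sextic resolvent, which permutes six auxiliary quantities evenly), which one would use to produce on the cover such a pencil compatibly with the ramification locus. Producing the pencil, verifying that its generic member is a conic with a $k$-rational point, and parametrizing it, is the technical heart of the argument — the step I expect to be genuinely hard, and essentially the computation carried out in \cite{Mae89}. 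Pulling the resulting rational parametrization back through the $\mathbb{G}_m$-quotient and the variable $e_1$ of Step 2, and then through the transcendentals of the no-name lemma of Step 1, exhibits $k(A_5)$ as purely transcendental over $k$.
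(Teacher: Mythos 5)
This theorem is only quoted in the paper (it is Maeda's result, cited from \cite{Mae89}); the paper contains no proof of it, so the only question is whether your argument stands on its own. It does not: the heart of the theorem is missing. Your Steps 1 and 2 are standard and essentially correct reductions (modulo the characteristic restrictions you flag): the no-name reduction from the regular to the permutation representation, the identification $k(x_1,\dots,x_5)^{A_5}=k(e_1,\dots,e_5)(\Delta)$, the Tschirnhaus translation splitting off $e_1$, and the weighted $\mathbb{G}_m$-quotient all work, and they reduce the problem to the $k$-rationality of a specific threefold. But that threefold rationality \emph{is} the theorem — everything up to that point is routine — and in Step 3 you explicitly defer it (``the step I expect to be genuinely hard, and essentially the computation carried out in \cite{Mae89}''). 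The proposed route through a conic-bundle structure coming from $A_5\cong\mathrm{PSL}_2(5)$ and the sextic resolvent is not substantiated: you neither produce the pencil, nor verify that its generic fibre is a conic, nor exhibit a rational point or section. As written, the proposal is a (correct) reduction to an unsolved three-dimensional problem, not a proof.

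Two further points. First, the theorem is asserted for an arbitrary field $k$, and Maeda's proof is characteristic-free, whereas your argument breaks at several places in characteristics $2$, $3$ and $5$ (the Maschke splitting of $k[A_4]$, the identity $k(x_1,\dots,x_5)^{A_5}=k(e_1,\dots,e_5)(\sqrt{\delta})$, and the depression of the quintic, respectively); these are not cosmetic, since in characteristic $2$ the quadratic subextension is Artin--Schreier rather than Kummer and the whole double-cover picture changes. Second, for comparison: Maeda's actual argument also reduces to a three-dimensional problem, but a different one — the invariant field of the projectivized standard $4$-dimensional representation, $k(\mathbb{P}(V_4))^{A_5}$ — which he then settles by an explicit, characteristic-free construction of invariant generators rather than by fibering a discriminant double cover in conics. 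If you want to complete your approach you must either carry out the conic-bundle construction in full or switch to an explicit invariant-theoretic computation on your threefold; either way the missing step is the entire substance of the result.
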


\begin{theorem}[{Serre \cite[Chapter IX]{GMS03}, see also Kang \cite{Kan05}}]
Let $G$ be a group with a $2$-Sylow subgroup which is cyclic of 
order $\geq 8$ or the generalized quaternion $Q_{16}$ of order $16$. 
Then $\bQ(G)$ is not stably $\bQ$-rational. 
\end{theorem}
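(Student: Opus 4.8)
The plan is to prove the stronger assertion that $\bQ(G)$ is not even retract $\bQ$-rational; since $\bQ$ is infinite (indeed Hilbertian), this already gives non-stable-$\bQ$-rationality by the chain of implications recorded above. The first move is a reduction to the cyclic group $C_8$ of order $8$. By Saltman's criterion (\cite[Theorem 5.3]{Sal82a}, \cite[Theorem 3.12]{Sal84b}), $\bQ(G)$ is retract $\bQ$-rational if and only if there is a generic $G$-Galois extension over $\bQ$, equivalently a generic polynomial for $G$ over $\bQ$; and by a theorem of Kemper (``generic polynomials are descent-generic'') the existence of a generic polynomial for $G$ over $\bQ$ forces the existence of one for every subgroup $H\leq G$, so retract $\bQ$-rationality of $\bQ(G)$ descends to $\bQ(H)$ for all $H\leq G$. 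It therefore suffices to observe that every group $G$ whose Sylow $2$-subgroup $P$ is cyclic of order $2^m$ with $m\geq 3$, or is the generalized quaternion group $Q_{16}=\langle a,b\mid a^8=1,\ b^2=a^4,\ bab^{-1}=a^{-1}\rangle$, contains a subgroup isomorphic to $C_8$: the unique order-$8$ subgroup of $P$ in the cyclic case, and $\langle a\rangle$ in the quaternion case. (The same reduction in fact covers any $G$ whose Sylow $2$-subgroup contains an element of order $8$, such as $D_{16}$, $SD_{16}$, $C_{16}$ or $Q_{32}$.)

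The heart of the argument is to show that $\bQ(C_8)$ is not retract $\bQ$-rational, i.e.\ that there is no generic $C_8$-Galois extension over $\bQ$. For this I would invoke Wang's classical counterexample to Grunwald's theorem: there exists a cyclic \'etale extension of $\bQ_2$ of degree $8$ --- the ``special'' local extension in the Grunwald--Wang special case at the prime $2$ --- which is not the completion at $2$ of any $C_8$-Galois extension of $\bQ$; equivalently, the family of local conditions it defines is realized by no global cyclic octic field. On the other hand, a generic $C_8$-extension over $\bQ$ lives over a $\bQ$-rational parameter space $U\subseteq\mathbb{A}^n$, and combining its defining universality with weak approximation on $U$ --- together with Hilbert's irreducibility theorem, if one wants the resulting algebra to be a field --- one produces, for any finite collection of prescribed compatible local cyclic conditions on the completions $\bQ_v$, a global $C_8$-Galois \'etale $\bQ$-algebra inducing all of them. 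Hence the existence of a generic $C_8$-extension over $\bQ$ would force the Grunwald--Wang realizability property for $C_8$ over $\bQ$, contradicting Wang. So no such generic extension exists; by the first move the same holds whenever $C_8\leq P\leq G$, and therefore $\bQ(G)$ is not retract $\bQ$-rational, hence not stably $\bQ$-rational.

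A variant of the second step, closer to Serre's original route in \cite[Chapter IX]{GMS03} (see also \cite{Kan05}), trades Wang's counterexample for an analysis of the trace form: for a $G$-Galois extension $E/\bQ$ one considers the quadratic form $q_E=\langle\mathrm{Tr}_{E/\bQ}(x^2)\rangle$, and Serre's explicit computation of its Hasse--Witt invariant $w_2(q_E)$, when $G$ is cyclic of order $2^m$ with $m\geq 3$ or is $Q_{16}$, yields a cohomological obstruction to the stable $\bQ$-rationality of $\bQ(G)$ directly. Either route delivers the theorem.

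The main obstacle is the second step, which is the only part carrying genuine arithmetic content: it rests on the Grunwald--Wang special case at $p=2$ (equivalently, on Serre's trace-form computation) and on the nontrivial implication ``generic extension over $\bQ$ $\Rightarrow$ weak-approximation-type realizability of prescribed local conditions.'' Everything else is formal --- the $k$-rational $\Rightarrow$ stably $\Rightarrow$ retract chain, Saltman's generic-extension criterion, Kemper's descent to subgroups, and the elementary group theory placing $C_8$ inside the listed Sylow $2$-subgroups --- provided one keeps in mind that $\bQ$ is infinite and Hilbertian, which is exactly what makes all this machinery available.
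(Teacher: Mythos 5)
Your reduction to $C_8$ is where the argument breaks. You claim that the existence of a generic polynomial (equivalently, by Saltman, retract $\bQ$-rationality of $\bQ(G)$) descends to every subgroup $H\le G$, citing Kemper's ``generic polynomials are descent-generic''. Kemper's theorem says something weaker: a generic polynomial for $G$ also \emph{parametrizes} all $H$-Galois extensions for $H\le G$ as specializations; it does not produce a generic polynomial \emph{for} $H$ (the polynomial still has group $G$ at the generic point), and no such descent to subgroups can hold. Concretely, $C_8\le S_8$ and $\bQ(S_8)$ is $\bQ$-rational (Noether's problem for $S_n$ has a positive answer via symmetric functions), hence retract $\bQ$-rational, while $\bQ(C_8)$ is not; so neither retract rationality nor the existence of generic polynomials is inherited by subgroups. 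With the reduction gone, your Grunwald--Wang argument (which is correct in substance, and is exactly Saltman's proof that $\bQ(C_8)$ admits no generic extension) only reaches groups that admit $C_8$ as a \emph{quotient}. That does salvage the cyclic case: a group with cyclic $2$-Sylow subgroup has a normal $2$-complement by Burnside's transfer theorem, hence surjects onto $C_{2^m}$ and then onto $C_8$, and prescribed local behaviour pushes forward along quotient maps, so Wang's counterexample still obstructs a generic $G$-extension there.

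The quaternion case, however, is genuinely out of reach of this method: $Q_{16}$ has abelianization $C_2\times C_2$, so neither $Q_{16}$ nor any $G$ with $2$-Sylow subgroup $Q_{16}$ surjects onto $C_8$, and the Grunwald--Wang obstruction says nothing about it. This is precisely why the theorem is attributed to Serre rather than to Saltman: the argument of \cite[Chapter IX]{GMS03} runs through the trace form $q_E=\mathrm{Tr}_{E/\bQ}(x^2)$ and the formula for its Hasse--Witt invariant, which exhibits a non-negligible cohomology class for exactly the listed Sylow types and thereby obstructs stable (indeed retract) $\bQ$-rationality uniformly in both cases. Your final paragraph names this route but does not carry out any of it, so as written the proposal establishes the theorem only for groups surjecting onto $C_8$ and leaves the $Q_{16}$ half unproved.
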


\begin{theorem}[{Plans \cite[Theorem 2]{Pla09}}]
Let $A_n$ be the alternating group of degree $n$. 
If $n\geq 3$ is odd integer, then 
$\bQ(A_n)$ is rational over $\bQ(A_{n-1})$. 
In particular, if $\bQ(A_{n-1})$ is $\bQ$-rational, then so is $\bQ(A_n)$. 
\end{theorem}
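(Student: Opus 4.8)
The plan is to work with the regular representation and to exploit the special feature of odd $n$: the cycle $\sigma=(1\,2\,\cdots\,n)$ is then an even permutation, so $\sigma\in A_n$, and writing $A_{n-1}=\mathrm{Stab}_{A_n}(n)$ one has the exact factorisation $A_n=\langle\sigma\rangle\, A_{n-1}$ with $\langle\sigma\rangle\cap A_{n-1}=\{1\}$. Thus every $g\in A_n$ is uniquely $g=\sigma^i h$ with $i\in\bZ/n\bZ$ and $h\in A_{n-1}$, and the generators of $\bQ(x_g:g\in A_n)$ may be relabelled $x_{i,h}$. In these coordinates $\sigma$ acts by the clean cyclic shift $x_{i,h}\mapsto x_{i+1,h}$, while $A_{n-1}$ fixes the block $B_0:=\{x_{0,h}:h\in A_{n-1}\}$ setwise, acting there as its own regular representation, and mixes the blocks $B_i$ ($i\neq 0$) by a twisted permutation. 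Equivalently $\bQ[A_n]\cong\mathrm{Ind}_{A_{n-1}}^{A_n}\bQ[A_{n-1}]$, and the role of odd $n$ is precisely that this induction admits the cyclic transversal $\langle\sigma\rangle$.

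First I would take invariants under $A_{n-1}$. Since $\mathrm{Res}_{A_{n-1}}\bQ[A_n]\cong\bQ[A_{n-1}]^{\oplus n}$ with the first summand realised honestly as the $A_{n-1}$-stable block $B_0$, the ``no-name lemma'' (rationality of $k(V\oplus W)^G$ over $k(V)^G$ when $G$ acts faithfully on $V$) gives that $\bQ(x_{i,h}:i,h)^{A_{n-1}}$ is rational over $\bQ(B_0)^{A_{n-1}}=\bQ(A_{n-1})$, of transcendence degree $(n-1)|A_{n-1}|$, and — crucially for the next step — this copy of $\bQ(A_{n-1})$ lives inside the ambient field as $\bQ(x_{0,h}:h)^{A_{n-1}}$. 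The remaining task is to pass from $A_{n-1}$-invariants down to $A_n$-invariants, i.e.\ to account for the extra symmetry coming from $\sigma$. Here I would introduce an explicit $A_n$-equivariant rational change of the block coordinates which linearises the $\sigma$-shift (replacing the coordinates of each $B_i$ by $\sigma$-isotypic combinations over $\bQ$, using the $\bQ$-rational splitting $\bQ[\bZ/n\bZ]\cong\bigoplus_{d\mid n}\bQ(\zeta_d)$), and then show by a second no-name / Speiser-descent argument, carried out over the base field $\bQ(A_{n-1})=\bQ(x_{0,h}:h)^{A_{n-1}}$, that $\bQ(A_n)$ becomes purely transcendental over this base. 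The transcendence count is consistent: $\operatorname{trdeg}\bQ(A_n)-\operatorname{trdeg}\bQ(A_{n-1})=|A_n|-|A_{n-1}|=(n-1)|A_{n-1}|$.

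The main obstacle is this gluing step, and it is exactly the failure of $A_{n-1}$ to be normal in $A_n$: one cannot simply ``quotient by $A_{n-1}$ and then by $A_n/A_{n-1}$'', because $A_n/A_{n-1}$ is only a set and the $\langle\sigma\rangle$-action does not descend to the $A_{n-1}$-invariant field. The way to get around this is never to leave the ambient $\bQ$-rational field $\bQ(x_{i,h})$: keep $A_n$ acting $\bQ(A_{n-1})$-linearly after the linearisation above, exhibit explicitly $(n-1)|A_{n-1}|$ algebraically independent $A_n$-invariants built from ratios and $\sigma$-symmetrised sums of block coordinates, and verify that the subfield they generate over the fixed copy of $\bQ(A_{n-1})$ is all of $\bQ(A_n)$. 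Checking algebraic independence of these invariants, and that no further ``essential'' variable survives beyond $\bQ(A_{n-1})$, is where the real work lies; it is precisely the existence of the cyclic transversal $\langle\sigma\rangle\le A_n$ that is used here, and for even $n$ the cycle $\sigma$ leaves $A_n$ and the scheme collapses — as it must, since the statement is stated only for odd $n$.
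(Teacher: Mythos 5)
The statement you are proving is quoted in this paper from Plans [Pla09, Theorem 2] and is not proved here, so there is no internal argument to compare against; I can only assess your sketch on its own terms. It has a genuine gap at exactly the step you flag as ``where the real work lies,'' and the way you propose to organise that step cannot work as written. The concrete problem is your choice of base field. You propose to realise $\bQ(A_{n-1})$ inside the ambient field as $E:=\bQ(x_{0,h}:h\in A_{n-1})^{A_{n-1}}$ and then to show $\bQ(A_n)$ is purely transcendental ``over this base.'' But $E$ is not contained in $\bQ(A_n)=\bQ(x_g:g\in A_n)^{A_n}$: the element $\sigma$ carries the subfield $\bQ(x_{0,h}:h)$ onto $\bQ(x_{1,h}:h)$, which is generated by a disjoint set of variables, so $E\cap\bQ(A_n)=\bQ$. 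A field cannot be rational over a field it does not contain, so a ``second descent over the base $E$'' is incoherent as stated; one must first manufacture some other embedding $\bQ(A_{n-1})\hookrightarrow\bQ(A_n)$, or argue for an abstract isomorphism $\bQ(A_n)\simeq\bQ(A_{n-1})(t_1,\dots,t_r)$ via a chain of rationality statements over a common subfield, and nothing in the sketch does either.

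Relatedly, your first reduction --- that $\bQ(x_g:g\in A_n)^{A_{n-1}}$ is rational over $\bQ(A_{n-1})$ by the no-name lemma --- is correct but does not advance the proof: that field contains $\bQ(A_n)$ as a subfield of index $n$, and subfields of rational fields need not be rational (this is the entire content of Noether's problem), so no information about $\bQ(A_n)$ is extracted. You rightly observe that the $\langle\sigma\rangle$-action does not descend to the $A_{n-1}$-invariant field because $A_{n-1}$ is not normal, but the correct conclusion is that this two-stage strategy must be abandoned, not patched; what remains of the proposal (``exhibit $(n-1)|A_{n-1}|$ algebraically independent $A_n$-invariants \dots\ and verify they generate'') is the theorem restated, with no construction supplied. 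Two further points. First, the $\bQ$-rational diagonalisation of the $\sigma$-shift via $\bQ[\bZ/n\bZ]\simeq\bigoplus_{d\mid n}\bQ(\zeta_d)$ is $\langle\sigma\rangle$-equivariant but not $A_n$-equivariant, since $A_{n-1}$ does not normalise $\langle\sigma\rangle$; after that coordinate change the $A_{n-1}$-action is no longer block-monomial, so it does not set up a Speiser/Hajja--Kang descent for the full group. Second, your scheme uses nothing about the pair $(A_n,A_{n-1})$ beyond the exact factorisation $A_n=\langle\sigma\rangle A_{n-1}$ with cyclic transversal (the faithfulness of $A_{n-1}$ on $B_0$ enters only in the step that, as noted, is irrelevant to the goal). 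Applied verbatim to $G=C_{47}$, $H=\{1\}$, $C=G$, the same scheme would ``prove'' that $\bQ(C_{47})$ is $\bQ$-rational, which is false by Swan's theorem. So the missing step is not merely technical: any correct proof must exploit some additional special feature of $A_{n-1}\le A_n$, and identifying and using that feature is precisely what the sketch leaves undone.
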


However, it is an open problem whether $k(A_n)$ is $k$-rational for $n\geq 6$.\\

{}From now on, 
we restrict ourselves to the case where $G$ is a $p$-group. 
By Theorem \ref{thFis} and Theorem \ref{thKun}, 
we may focus on the case where $G$ is a non-abelian $p$-group 
and $k$ is a field with char $k\neq p$. 
For $p$-groups of small order, the following results are known.

\begin{theorem}[Chu and Kang \cite{CK01}] \label{thCK01}
Let $p$ be any prime and $G$ be a $p$-group of order $\le p^4$
and of exponent $e$. If $k$ is a field containing a primitive $e$-th root
of unity, then $k(G)$ is $k$-rational.
\end{theorem}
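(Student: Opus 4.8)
The plan is to turn Noether's problem for such $G$ into a rationality problem for fixed fields of \emph{monomial} linear actions and then to settle the finitely many groups on Burnside's classification of groups of order $p^3$ and $p^4$ one at a time. By Theorem~\ref{thFis} we may assume $G$ is non-abelian, so $G\neq 1$ and $|G|\in\{p^3,p^4\}$; since a non-trivial $p$-group has exponent $e$ a positive power of $p$ and there is no primitive $p$-th root of unity in characteristic $p$, the hypothesis that $k$ contains $\zeta_e$ forces ${\rm char}\,k\neq p$ (the complementary case ${\rm char}\,k=p$ being Theorem~\ref{thKun}); hence ${\rm char}\,k\nmid|G|$ and $kG$ is semisimple.

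Next I would reduce $k(G)$ to $k(V)^{G}$ for a minimal faithful $k$-representation $V$ of $G$. Such a $V$ is a direct sum of pairwise non-isomorphic simple $kG$-modules, so it embeds in the regular representation; writing $kG\cong V\oplus W$ as $kG$-modules, the $G$-action on $k(V\oplus W)=k(V)(x_1,\dots,x_m)$ is linear over $k(V)$ with $k$-rational coefficients while the action on $k(V)$ itself is faithful, so by the no-name lemma (Speiser's theorem / Hilbert~90) the field $k(G)=k(V\oplus W)^{G}$ is purely transcendental over $k(V)^{G}$. It therefore suffices to prove $k(V)^{G}$ is $k$-rational, and since $p$-groups are $M$-groups and, by the hypothesis on roots of unity, each of their irreducible representations is realizable over $k$ by monomial matrices, we may take $V$ so that $G$ acts on $k(x_1,\dots,x_n)$ with $n=\dim_k V$ by monomial $k$-automorphisms: every generator of $G$ permutes the $x_i$ up to multiplication by roots of unity lying in $k$.

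I would then run through the list — two non-abelian groups of order $p^3$ and about ten of order $p^4$, with the usual variations at $p=2$ (the order-$8$ groups $D_4$, $Q_8$ and the nine non-abelian groups of order $16$) — making the monomial model explicit in each case. When $G$ is a direct or central product involving a cyclic factor or a group of order $p^3$, I would peel that factor off: its coordinate $z$ carries only a scalar action by a root of unity, so passing to $z^{p}$ reduces the problem to the smaller group, already handled. When $G$ has cyclic centre it has a faithful irreducible monomial representation of dimension $1$ or $p$, and I would compute $k(V)^{G}$ in stages along a chain $1\subset N\subset G$ with $N$ abelian: diagonalizing the $N$-action (possible because $\zeta_e\in k$) makes $k(V)^{N}$ visibly $k$-rational by a Fischer-type calculation, after which one takes invariants of this rational field under the residual monomial action of the cyclic — or $C_p\times C_p$ — quotient $G/N$, appealing to the known rationality of fixed fields of monomial cyclic actions and of monomial actions in low dimension.

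The hard part will be the order-$p^3$ group of exponent $p$ (the Heisenberg group) and the order-$p^4$ groups of maximal class or of modular type, whose minimal faithful representation is the $p$-dimensional monomial one in which a cyclic group of order $p$ permutes the coordinates $x_1,\dots,x_p$ cyclically while a diagonal $p$-subgroup intertwines with it: here no ambient-dimension bound applies, and one must build by hand a descending chain of substitutions — to the ratios $x_i/x_1$, to norms such as $x_1^{p}$, and to further twisted coordinates — exhibiting each successive fixed field as $k$-rational, the delicate point being to check that these substitutions are well-defined $k$-automorphisms and that the final generating set is a genuine transcendence basis. The prime $p=2$ needs separate care because $Q_8$ and $Q_{16}$ are the first genuinely subtle cases, disposed of by the classical explicit rationality computation for their monomial $2$-dimensional fixed fields (which exist over $k$ precisely because $e\mid 8$ and $\zeta_8\in k$). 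Throughout, the standing hypotheses ${\rm char}\,k\neq p$ and $\zeta_e\in k$ are exactly what keeps the machinery available: semisimplicity of $kG$, the monomial models, every diagonalization, and each application of Hilbert~90.
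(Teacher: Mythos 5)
The paper itself contains no proof of this statement: Theorem \ref{thCK01} is quoted from Chu and Kang \cite{CK01} as background, so there is nothing internal to compare your argument against; the comparison has to be with the actual proof in \cite{CK01}. Your skeleton does match that proof in outline, and your preliminary reductions are sound: the hypothesis $\zeta_e\in k$ does force ${\rm char}\,k\neq p$ for non-trivial $G$ (the characteristic-$p$ case being Theorem \ref{thKun}), the passage from $k(G)$ to $k(V)^G$ for a faithful subrepresentation $V$ of the regular representation is exactly the Hajja--Kang linearization (Theorem \ref{thHK} in this paper), and since $p$-groups are monomial and $\zeta_e\in k$, one may indeed take $V$ to be realized by monomial matrices over $k$.

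The genuine gap is the step where you ``appeal to the known rationality of fixed fields of monomial cyclic actions.'' No such general theorem exists, and it is precisely at this point that the whole content of the Chu--Kang theorem lives. A non-abelian group of order $p^3$ or $p^4$ has faithful irreducible representations of dimension $p$, so after diagonalizing a normal abelian subgroup $N$ of index $p$ you are left with a monomial action of $C_p$ on a function field of transcendence degree $p$ (or $p-1$ after splitting off the centre). For $p\geq 5$ this is beyond every known general rationality result for monomial actions: Hajja's theorem covers dimension $2$ only, and already in dimension $3$ the classification in \cite{HKY11} (used in this very paper as Lemma \ref{lemHKY}) shows that monomial actions can fail to be rational, so rationality cannot be invoked wholesale even there without identifying which case one is in. The Chu--Kang proof supplies, group by group, the explicit descending chains of substitutions in these $p$-dimensional monomial models --- the part you flag as ``the hard part'' and defer --- and verifying that each substitution is a well-defined generating set of the fixed field is not routine. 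As written, your text is a correct plan of attack rather than a proof. A secondary, fixable point: you should justify that every non-abelian group of order $\leq p^4$ possesses a normal abelian subgroup of index $p$ (true, but needed to guarantee that the residual quotient acting monomially is $C_p$ rather than something larger).
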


\begin{theorem}[Chu, Hu, Kang and Prokhorov \cite{CHKP08}]\label{thCHKP08}
Let $G$ be a group of order $32$ and of exponent $e$.  
If $k$ is a field containing a primitive $e$-th root of unity, 
then $k(G)$ is $k$-rational. 
\end{theorem}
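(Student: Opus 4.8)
The plan is to reduce Noether's problem for each of the $51$ groups $G$ of order $32$ to the rationality of a small-dimensional \emph{linear} invariant field, and then to dispatch the finitely many resulting cases by structural reduction lemmas, handling only a few exotic groups by direct computation. Since $k$ contains a primitive $e$-th root of unity with $e=\exp(G)$ a power of $2$, necessarily ${\rm char}\,k\neq 2$, so $|G|=32$ is invertible in $k$ and $k[G]$ is semisimple. Pick a faithful $k[G]$-submodule $V\subseteq k[G]$ of least dimension --- one checks $\dim_{k}V\leq 4$ for every group of order $32$: a faithful irreducible of dimension $2$ or $4$ when $Z(G)$ is cyclic, a sum of at most three irreducibles when $Z(G)$ is non-cyclic. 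Writing $k[G]\cong V\oplus W$ as $G$-modules, the no-name lemma --- $V\oplus W\to V$ is a $G$-linearised, hence rationally trivial, vector bundle and $G$ acts generically freely on $V$ --- gives that $k(G)=k(k[G])^{G}$ is rational over $k(V)^{G}$. So it suffices to prove each $k(V)^{G}$ is $k$-rational.

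Abelian $G$ are covered by Fischer's Theorem~\ref{thFis}. The largest remaining block consists of the groups $G$ having an abelian normal subgroup $A$ with $[G:A]=2$ (in particular all metacyclic groups of order $32$). For such $G$, decompose $V|_{A}$ into $A$-eigenlines $kz_{1}\oplus\cdots\oplus kz_{n}$, whose eigenvalues lie in $k$ by hypothesis; then $A$ acts diagonally and a generator of $G/A$ sends $z_{i}\mapsto c_{i}z_{\pi(i)}$ for an involution $\pi$ and $c_{i}\in k$ (if $V|_{A}$ has repeated characters one falls back on rationality of invariants of finite subgroups of ${\rm GL}_2$ over a field with enough roots of unity). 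Now $k(V)^{A}=k(m_{1},\dots,m_{n})$ is $k$-rational, $m_{1},\dots,m_{n}$ being a $\bZ$-basis of the rank-$n$ lattice of $A$-invariant Laurent monomials, and $k(V)^{G}=(k(V)^{A})^{C_{2}}$ is the invariant field of a monomial action of $C_{2}$ on $k(m_{1},\dots,m_{n})$. Since every $\bZ[C_{2}]$-lattice is a direct sum of copies of $\bZ$ (trivial), $\bZ$ (sign) and $\bZ[C_{2}]$, this action splits into $1$- and $2$-dimensional monomial blocks with visibly rational invariant fields --- a conic $k(m+cm^{-1})$, respectively $k(m+m',mm')$, up to a harmless cocycle twist --- so $k(V)^{G}$ is $k$-rational; alternatively the metacyclic cases follow from Kang's theorem on Noether's problem for metacyclic $p$-groups.

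The remaining groups --- the two extraspecial groups of order $32$ and a handful of others --- have an abelian normal subgroup $A$ only of index $4$, so $G/A\cong C_{4}$ or $C_{2}\times C_{2}$; Clifford theory shows $V|_{A}$ is multiplicity-free, and the same diagonalisation reduces $k(V)^{G}$ to the invariant field of a monomial action of $G/A$ on $k(m_{1},m_{2},m_{3},m_{4})$. Here $\bZ[C_{4}]$- and $\bZ[C_{2}\times C_{2}]$-lattices need not split into rank-$1$ and regular parts, so these residual four-variable monomial actions must be handled group by group, and \emph{the main obstacle is to exhibit in each case an explicit rational system of parameters}, using the precise eigenvalue and cocycle data forced by the defining relations together with the presence of enough roots of unity in $k$. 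For the extraspecial groups it is cleaner to use the central-product structures $D_{8}*D_{8}$ and $D_{8}*Q_{8}$: the faithful $4$-dimensional $V$ is a tensor product $V_{1}\otimes V_{2}$ of $2$-dimensional representations, realised on the space of $2\times 2$ matrices with the two factors acting by left and right multiplication, which turns the problem into a ${\rm GL}_2$-type quotient known to be rational. Carrying out this program for all $51$ groups --- the reduction above in every case, then the index-$2$ or index-$4$ analysis according to the structure --- proves the theorem, and together with Theorems~\ref{thFis} and~\ref{thCK01} settles the rationality of $k(G)$ for every $2$-group of order $\leq 32$.
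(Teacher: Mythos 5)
This theorem is quoted in the paper from Chu--Hu--Kang--Prokhorov \cite{CHKP08}; the present paper contains no proof of it, so your proposal can only be measured against the published argument rather than anything in this text. Your overall strategy --- pass to a faithful subrepresentation $V\subseteq k[G]$ of small dimension via the no-name lemma (Theorem \ref{thHK}), diagonalize over an abelian normal subgroup $A$ using the hypothesis $\zeta_e\in k$, and analyze the induced monomial action of $G/A$ on the multiplicative lattice generating $k(V)^A$ --- is indeed the standard route and is the skeleton of what is actually done in \cite{CHKP08}. The index-$2$ analysis via the classification of indecomposable $\bZ[C_2]$-lattices is essentially sound, modulo the normalization of coefficients that you correctly flag as a "cocycle twist."

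There is nevertheless a genuine gap exactly where the real content of the theorem lies. For the groups whose maximal abelian normal subgroup has index $4$ (the two extraspecial groups and several others), you reduce to a $4$-variable monomial action of $C_4$ or $C_2\times C_2$ with coefficients and then state that these "must be handled group by group" and that "the main obstacle is to exhibit in each case an explicit rational system of parameters." That obstacle \emph{is} the theorem: monomial actions with coefficients are not rational for general structural reasons --- already in dimension $3$ there are non-rational examples (see \cite{HKY11}) --- so nothing short of the explicit changes of variables (of exactly the kind carried out at length in Section 2 of the present paper for order $128$) settles these cases, and none are supplied. Two auxiliary claims are also unsubstantiated: the asserted bound $\dim_k V\le 4$ does not follow from "a sum of at most three irreducibles" (three $2$-dimensional summands would give $6$, so the bound needs a case check against the classification), and "a ${\rm GL}_2$-type quotient known to be rational" for the extraspecial groups is an appeal to a result you neither state nor prove. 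As written, the proposal is a plausible reduction plan rather than a proof.
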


For more recent results, see e.g. 
\cite{HK10}, \cite{Kan11}, \cite{KMZ12}.


Saltman introduced a notion of retract $k$-rationality (see Definition \ref{defr}) 
and the unramified Brauer group. 
Recall that the implications for an infinite field $k$: 
$k$-rational $\Rightarrow$ stably
$k$-rational $\Rightarrow$ retract $k$-rational. 
Hence if $k(G)$ is not retract $k$-rational, then it is not $k$-rational.


\begin{definition}[{Saltman \cite[Definition 3.1]{Sal84a}, \cite[page 56]{Sal85}}] \label{defSal}
Let $K/k$ be an extension of fields. 
The {\it unramified Brauer group} ${\rm Br}_{\rm nr}(K/k)$ 
of $K$ over $k$ is defined to be 
\[
{\rm Br}_{\rm nr}(K/k)=\bigcap_R {\rm Image} \{ {\rm Br}(R)\to{\rm Br}(K)\}
\]
where ${\rm Br}(R)\to {\rm Br}(K)$ is the natural map of
Brauer groups and $R$ runs over all the discrete valuation rings $R$ 
such that $k\subset R\subset K$ and $K$ is the quotient field of $R$. 
We omit $k$ from the notation 
and write just ${\rm Br}_{\rm nr}(K)$ when the base field $k$ is clear 
from the context.
\end{definition}

\begin{proposition}[{Saltman \cite{Sal84a}, \cite[Proposition 1.8]{Sal85}, \cite{Sal87}}] \label{propSal}
If $K$ is retract $k$-rational, then
${\rm Br}(k)\overset{\sim}{\longrightarrow}{\rm Br}_{\rm nr}(K)$. 
In particular, if $k$ is an algebraically closed field and $K$ is
retract $k$-rational, then ${\rm Br}_{\rm nr}(K)=0$.
\end{proposition}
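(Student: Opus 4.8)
\medskip
\noindent\textbf{Proof strategy.}
The plan is to prove that the canonical homomorphism $\iota_K\colon{\rm Br}(k)\to{\rm Br}_{\rm nr}(K/k)$ is an isomorphism; the second assertion then follows at once, since ${\rm Br}(k)=0$ when $k$ is algebraically closed. First, $\iota_K$ is well defined: for any discrete valuation ring $R$ with $k\subset R\subset K$ and ${\rm Frac}(R)=K$ the map ${\rm Br}(k)\to{\rm Br}(K)$ factors through ${\rm Br}(R)$ via $k\hookrightarrow R$, so its image lies in $\bigcap_R {\rm Image}\{{\rm Br}(R)\to{\rm Br}(K)\}={\rm Br}_{\rm nr}(K/k)$. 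Fix retract data as in Definition~\ref{defr}(4): a $k$-subalgebra $A\subset K$ with ${\rm Frac}(A)=K$, a nonzero $f\in k[x_1,\dots,x_n]$, and $k$-algebra homomorphisms $\varphi\colon A\to B:=k[x_1,\dots,x_n][1/f]$ and $\psi\colon B\to A$ with $\psi\circ\varphi=1_A$. Since $\psi\circ\varphi=1_A$, the map $\varphi$ is injective and hence extends to a $k$-embedding $\widetilde\varphi\colon K\hookrightarrow L:=k(x_1,\dots,x_n)$. The composite ${\rm Br}(k)\xrightarrow{\iota_K}{\rm Br}(K)\xrightarrow{\widetilde\varphi_*}{\rm Br}(L)$ equals the canonical map $\iota_L\colon{\rm Br}(k)\to{\rm Br}(L)$, which is injective by Faddeev's exact sequence applied inductively on $n$; therefore $\iota_K$, and a fortiori ${\rm Br}(k)\to{\rm Br}_{\rm nr}(K/k)$, is injective.

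\smallskip
\noindent\textbf{Reduction of surjectivity.}
Next I would check that $\widetilde\varphi_*$ maps ${\rm Br}_{\rm nr}(K/k)$ into ${\rm Br}_{\rm nr}(L/k)$. Regard $K$ as a subfield of $L$ via $\widetilde\varphi$. Given $\alpha\in{\rm Br}_{\rm nr}(K/k)$ and a discrete valuation ring $R'$ with $k\subset R'\subset L$, ${\rm Frac}(R')=L$, the intersection $R:=R'\cap K$ is a valuation ring of $K$ containing $k$ whose value group embeds into $\bZ$; hence either $R=K$, in which case $\alpha\in{\rm Br}(K)\subseteq{\rm Image}\{{\rm Br}(R')\to{\rm Br}(L)\}$, or $R$ is a discrete valuation ring, in which case the lift of $\alpha$ to ${\rm Br}(R)$ provided by $\alpha\in{\rm Br}_{\rm nr}(K/k)$ maps into ${\rm Br}(R')$ because $R\subseteq R'$. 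Thus $\widetilde\varphi_*\alpha\in{\rm Br}_{\rm nr}(L/k)$. Now recall the classical computation ${\rm Br}_{\rm nr}(k(x_1,\dots,x_n)/k)={\rm Br}_{\rm nr}(L/k)\cong{\rm Br}(k)$, the isomorphism being $\iota_L$ (Faddeev's sequence and induction on $n$). Hence the composite ${\rm Br}(k)\xrightarrow{\iota_K}{\rm Br}_{\rm nr}(K/k)\xrightarrow{\widetilde\varphi_*}{\rm Br}_{\rm nr}(L/k)\cong{\rm Br}(k)$ is the identity, and so, once one knows that $\widetilde\varphi_*$ is injective on ${\rm Br}_{\rm nr}(K/k)$, every $\alpha\in{\rm Br}_{\rm nr}(K/k)$ satisfies $\widetilde\varphi_*\alpha=\widetilde\varphi_*\bigl(\iota_K(\widetilde\varphi_*\alpha)\bigr)$, whence $\alpha=\iota_K(\widetilde\varphi_*\alpha)$ lies in the image of $\iota_K$. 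This would finish the proof.

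\smallskip
\noindent\textbf{The main obstacle.}
Everything thus reduces to producing, out of $\psi\colon B\to A$, a left inverse of $\widetilde\varphi_*$ on unramified Brauer groups --- a well-defined ``specialization along $\psi$'' map ${\rm Br}_{\rm nr}(L/k)\to{\rm Br}_{\rm nr}(K/k)$. Since $\psi$ induces no map of fraction fields, one cannot merely restrict classes. Instead I would exploit that $B=k[x_1,\dots,x_n][1/f]$ is regular: after possibly enlarging $f$, a class in ${\rm Br}_{\rm nr}(L/k)\cong{\rm Br}(k)$ is the class of an Azumaya $B$-algebra $\mathcal{A}$ (purity for Brauer groups of regular rings, in the style of Auslander--Goldman and Gabber); transport $\mathcal{A}$ through $\psi$ to an Azumaya $A$-algebra $\psi^*\mathcal{A}$; and send it to ${\rm Br}(A)\to{\rm Br}(K)$. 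One then has to verify that this is independent of the choices made, that it lands in ${\rm Br}_{\rm nr}(K/k)$, and that it retracts $\widetilde\varphi_*$ --- the last point using $\psi\circ\varphi=1_A$ together with the $k$-rational place $e_p\circ\varphi\colon A\to k$ (evaluation at a point $p$ with $f(p)\neq 0$, which exists because $k$ is infinite). This extension-and-transport step, rather than the bookkeeping with valuation rings and functoriality of Brauer groups, is where the difficulty lies, and it is precisely here that retract rationality --- as opposed to mere unirationality --- and the regularity of the localized polynomial ring $B$ are indispensable.
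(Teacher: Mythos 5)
The paper offers no proof of this proposition --- it is quoted from Saltman with references --- so your attempt can only be measured against the standard argument. Your scaffolding is sound: the well-definedness and injectivity of $\iota_K\colon{\rm Br}(k)\to{\rm Br}_{\rm nr}(K/k)$, the extension of $\varphi$ to a $k$-embedding $\widetilde\varphi\colon K\hookrightarrow L=k(x_1,\dots,x_n)$, the verification (via intersecting valuation rings with $K$) that $\widetilde\varphi_*$ carries unramified classes to unramified classes, the classical identification ${\rm Br}_{\rm nr}(L/k)\cong{\rm Br}(k)$, and the reduction of surjectivity to producing a left inverse of $\widetilde\varphi_*$ on unramified classes are all correct.

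The genuine gap is that this left inverse is never constructed, and the sketch you give would not, as stated, close the argument. If you represent a class of ${\rm Br}_{\rm nr}(L/k)\cong{\rm Br}(k)$ by a constant Azumaya algebra $D\otimes_k B$ and transport it through $\psi$, you obtain $D\otimes_k A$ and hence a class $\iota_K(\beta)$ in ${\rm Br}(K)$; but this carries no information about the class $\alpha$ you started from, so you only recompute the composite ${\rm Br}(k)\to{\rm Br}_{\rm nr}(K/k)$ rather than showing the map retracts $\widetilde\varphi_*$. The step that actually makes the proof work is the following. Spread $\alpha$ out to an Azumaya algebra $\mathcal{A}_0$ over a localization $A[1/g]$ (possible for \emph{any} Brauer class of ${\rm Frac}(A)$); base-change along $\varphi$ to an Azumaya algebra over $B':=B[1/\varphi(g)]$ whose generic fiber is $\widetilde\varphi_*\alpha$, which by your earlier steps equals a constant class $\iota_L(\beta)$ for some $\beta\in{\rm Br}(k)$; invoke the injectivity of ${\rm Br}(B')\to{\rm Br}(L)$ for the \emph{regular} ring $B'$ (Auslander--Goldman) to upgrade this generic equality to a Brauer equivalence $\mathcal{A}_0\otimes_A B'\sim D\otimes_k B'$ over $B'$ itself; then pull back along $\psi$ and use $\psi\circ\varphi=1_A$ to get $\mathcal{A}_0\sim D\otimes_k A[1/g]$ over $A[1/g]$, whence $\alpha=\iota_K(\beta)$. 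This is exactly the verification you defer (``one then has to verify \dots\ that it retracts $\widetilde\varphi_*$''), and it is where both the regularity of $B$ and the retraction equation are used; note also that it is cleaner to compare the two Azumaya algebras over $B'$ and descend, rather than to define a map on ${\rm Br}_{\rm nr}(L/k)$ by choices and then check independence. Without this step the proof is incomplete.
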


\begin{theorem}[{Bogomolov \cite[Theorem 3.1]{Bog88}, Saltman \cite[Theorem 12]{Sal90}}]\label{thBog}
Let $G$ be a finite group and $k$ be an algebraically closed field with 
{\rm char} $k=0$ or {\rm char} $k=p$ with $p {\not |}$ $|G|$. 
Then ${\rm Br}_{\rm nr}(k(G)/k)$ is isomorphic to the group $B_0(G)$ defined by
\[
B_0(G)=\bigcap_A {\rm Ker}\{{\rm res}: H^2(G,\bQ/\bZ)\to H^2(A,\bQ/\bZ)\}
\]
where $A$ runs over all the bicyclic subgroups of $G$ $($a group $A$
is called bicyclic if $A$ is either a cyclic group or a direct
product of two cyclic groups$)$.
\end{theorem}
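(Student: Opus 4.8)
The plan is to follow Bogomolov's approach; Saltman's proof replaces the geometric input by a direct valuation-theoretic analysis on $k(G)$, but the skeleton is the same. Since $k$ is algebraically closed with ${\rm char}\,k\nmid|G|$, one has $H^2(G,k^*)\cong H^2(G,\bQ/\bZ)$, the Schur multiplier of $G$: the divisible group $k^*$ has torsion subgroup $\bQ/\bZ$ and a complementary $\bQ$-vector space, and the latter has vanishing positive-degree cohomology over the finite group $G$; moreover all of the \'etale (Galois) cohomology used below behaves exactly as for $k=\bC$. First I would reduce to a convenient linear action. By the no-name lemma, for every faithful $k[G]$-module $V$ the field $k(V)^G$ is stably $k$-isomorphic to $k(G)=k(V_{\mathrm{reg}})^G$; since ${\rm Br}_{\rm nr}(-/k)$ is a stable birational invariant (in particular ${\rm Br}_{\rm nr}(K(t)/k)={\rm Br}_{\rm nr}(K/k)$, consistent with Proposition~\ref{propSal}), it suffices to compute ${\rm Br}_{\rm nr}(k(V)^G/k)$. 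I would take $V=\bigoplus_{H\le G}k[G/H]$, or any faithful $V$ dominating every transitive $G$-set, so that every subgroup of $G$ — in particular every abelian one — occurs as a point stabilizer.

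The second step shows that every unramified class lies in the image of the crossed-product map from $H^2(G,k^*)$. Here I would use that $k[V]$ is a UFD, so $k(V)^*/k^*$ is free abelian on the prime divisors of $V$, a permutation $G$-module $\bigoplus_{\omega}\bZ[G/H_\omega]$ indexed by the $G$-orbits $\omega$ of prime divisors, with $H_\omega$ the decomposition group of a divisor in $\omega$. Shapiro's lemma and the finiteness of $G$ give $H^1(G,k(V)^*/k^*)=0$, so the exact sequence $1\to k^*\to k(V)^*\to k(V)^*/k^*\to 1$ produces an injection $\iota\colon H^2(G,k^*)\hookrightarrow H^2(G,k(V)^*)={\rm Br}(k(V)/k(V)^G)\subseteq{\rm Br}(k(V)^G)$. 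Conversely let $\alpha\in{\rm Br}_{\rm nr}(k(V)^G/k)$. Every discrete valuation ring of $k(V)$ containing $k$ restricts to one of $k(V)^G$ (an integral-closure argument), so the image of $\alpha$ in ${\rm Br}(k(V))$ is unramified; but $k(V)=k(x_1,\dots,x_n)$ is $k$-rational, hence ${\rm Br}_{\rm nr}(k(V)/k)={\rm Br}(k)=0$ by Proposition~\ref{propSal}, so $\alpha\in{\rm Br}(k(V)/k(V)^G)=H^2(G,k(V)^*)$. Finally, the standard compatibility identifying, orbit by orbit, the image of a class under $H^2(G,k(V)^*)\to H^2(G,k(V)^*/k^*)=\bigoplus_{\omega}H^2(H_\omega,\bZ)=\bigoplus_{\omega}{\rm Hom}(H_\omega,\bQ/\bZ)$ with its residues along the prime divisors of $V/G$ shows that an unramified class maps to $0$ there, hence lies in ${\rm im}\,\iota\cong H^2(G,\bQ/\bZ)$.

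The third step identifies which $\alpha\in H^2(G,\bQ/\bZ)$ make $\iota(\alpha)$ unramified; the claim is that this holds if and only if ${\rm res}^G_A(\alpha)=0$ for every abelian subgroup $A\le G$. Granting the claim, the theorem follows from Step~2 and the Schur-multiplier fact that $H^2(A,\bQ/\bZ)\cong\wedge^2 A$ is ``bilinear'', hence detected on the rank-$\le 2$ subgroups of $A$: vanishing of all restrictions ${\rm res}^G_A(\alpha)$ to abelian $A$ is the same as vanishing on all bicyclic $A$, i.e.\ $\alpha\in B_0(G)$. To establish the claim I would analyze the residue of $\iota(\alpha)$ at a divisorial valuation $v$ of $k(V)^G$: fixing an extension $w$ to $k(V)$ with decomposition group $G_w\le G$ and inertia subgroup $I_w\trianglelefteq G_w$, which is cyclic since the ramification is tame (automatic in characteristic $0$, and in characteristic $p$ because $p\nmid|G|$), one identifies the residue with the image of $\alpha|_{G_w}$ under the edge homomorphism $H^2(G_w,\bQ/\bZ)\to H^1\bigl(G_w/I_w,{\rm Hom}(I_w,\bQ/\bZ)\bigr)$, which is defined because $H^2(I_w,\bQ/\bZ)=0$ for $I_w$ cyclic. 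For the $V$ of Step~1 the pairs $(G_w,I_w)$ occurring are rich enough to realize every pair $(A,C)$ with $A\le G$ abelian and $C\le A$ cyclic, and running $C$ over the cyclic subgroups of an abelian $A$ the joint vanishing of these residues becomes the condition $\alpha|_A\in\bigcap_{C}{\rm im}\bigl(H^2(A/C,\bQ/\bZ)\to H^2(A,\bQ/\bZ)\bigr)$, which equals $0$ by the identity $\sum_C\ker\bigl(\wedge^2 A\to\wedge^2(A/C)\bigr)=\wedge^2 A$; this gives one implication. The reverse implication — that $\alpha\in B_0(G)$ makes every residue vanish — is obtained by reducing, at a general $(G_w,I_w)$, the edge-map condition to restrictions of $\alpha$ along subgroups (metacyclic, then abelian) on which $\alpha$ is already known to vanish.

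The hard part is the residue analysis of the third step: deriving the precise local formula for the residue of the crossed product $\iota(\alpha)$ at an arbitrary divisorial valuation of $k(V)^G$, and showing that the pairs $(G_w,I_w)$ arising for the chosen $V$ are exactly rich enough — neither too few (so that both implications go through) nor effectively uncontrollable. Concretely this means understanding the exceptional divisors of a resolution of the highly singular quotient $V/G$ and the inertia they carry; Bogomolov carries this out on transversal slices to the strata of $V/G$, reducing to resolutions of quotient singularities and to the behaviour of inertia under them, whereas Saltman bypasses resolution and argues valuation-theoretically on $k(V)^G$ directly. Either way one must also take care to match the group-cohomological residue appearing above with the geometric residue used in the definition of ${\rm Br}_{\rm nr}$, via the Kummer and Hochschild--Serre spectral sequences.
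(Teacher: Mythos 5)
The paper does not prove this statement: Theorem \ref{thBog} is imported verbatim from Bogomolov \cite{Bog88} and Saltman \cite{Sal90} and used as a black box, so there is no in-paper argument to compare yours against. Judged on its own, your outline faithfully reproduces the standard architecture of those proofs: the no-name reduction to a faithful $V$, the vanishing $H^1(G,k(V)^*/k^*)=0$ via Shapiro giving the injection $\iota\colon H^2(G,\bQ/\bZ)\simeq H^2(G,k^*)\hookrightarrow H^2(G,k(V)^*)={\rm Br}(k(V)/k(V)^G)$, the observation that unramified classes land in this relative Brauer group because ${\rm Br}_{\rm nr}(k(V)/k)=0$, and the residue analysis through the cyclic (tame) inertia group and the edge map $H^2(G_w,\bQ/\bZ)\to H^1\bigl(G_w/I_w,{\rm Hom}(I_w,\bQ/\bZ)\bigr)$. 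The auxiliary algebraic facts you invoke check out: $\sum_C C\wedge A=\wedge^2A$ over cyclic $C\le A$, the identification of $\ker(\mathrm{edge})$ with the inflation image when $H^2(I_w,\bQ/\bZ)=0$, and the equivalence of vanishing on all abelian versus all bicyclic subgroups.

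Two caveats. First, the implication ``$\iota(\alpha)$ unramified $\Rightarrow{\rm res}^G_A\alpha=0$ for all bicyclic $A$'' has a much shorter proof than your realization of pairs $(A,C)$ as decomposition/inertia pairs: the map ${\rm Br}(k(V)^G)\to{\rm Br}(k(V)^A)$ carries unramified classes to unramified classes, $k(V)^A$ is stably $k$-rational by the no-name lemma together with Fischer's theorem (Theorem \ref{thFis}), hence ${\rm Br}_{\rm nr}(k(V)^A/k)=0$, and $\iota_A$ is injective. Second, and more importantly, what you have written is a plan rather than a proof: the two steps you explicitly defer --- matching the group-cohomological residue with the geometric residue at an \emph{arbitrary} divisorial valuation of $k(V)^G$ (not only those coming from prime divisors of $V$), and showing that $\alpha\in B_0(G)$ forces the class in $H^1\bigl(G_w/I_w,{\rm Hom}(I_w,\bQ/\bZ)\bigr)$ to vanish for every such valuation --- are precisely the content of Bogomolov's Theorem 3.1 and Saltman's Theorem 12. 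In particular the phrase ``reducing the edge-map condition to restrictions of $\alpha$ along subgroups (metacyclic, then abelian) on which $\alpha$ is already known to vanish'' conceals the key lemma that the residue at $w$ is controlled by the restrictions of $\alpha$ to the subgroups $\langle g,I_w\rangle$ for $g\in G_w$, and that these non-abelian metacyclic restrictions are disposed of only after a genuine argument ($\alpha$ is hypothesized to vanish on bicyclic subgroups, not on metacyclic ones). As it stands the proposal is an accurate reading guide to \cite{Bog88} and \cite{Sal90}, not a self-contained proof.
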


\begin{remark}
For a smooth projective variety $X$ over $\bC$ with function 
field $K$, ${\rm Br}_{\rm nr}(K/\bC)$ is isomorphic to the birational invariant 
$H^3(X,\bZ)_{\rm tors}$ which was used by Artin and Mumford \cite{AM72} 
to provide some elementary examples of $k$-unirational varieties 
which are not $k$-rational (see \cite[Theorem 1.1 and Corollary]{Bog88}). 
\end{remark}

Following Kunyavskii \cite{Kun10}, 
we call $B_0(G)$ the Bogomolov multiplier of $G$. 
Note that $B_0(G)$ is a subgroup of $H^2(G,\bQ/\bZ)$ 
which is isomorphic to the Schur multiplier 
$H_2(G,\bZ)$ of $G$ (see Karpilovsky \cite{Kar87}). 
Because of Theorem \ref{thBog}, we will not 
distinguish $B_0(G)$ and ${\rm Br}_{\rm nr}(k(G)/k)$ when $k$ is 
an algebraically closed field, and {\rm char} $k=0$ or 
{\rm char} $k=p$ with $p {\not |}$ $|G|$. 

Using the Bogomolov multiplier $B_0(G)$, Saltman and Bogomolov gave 
counter-examples to Noether's problem for non-abelian
$p$-groups over algebraically closed field.

\begin{theorem}[{Saltman \cite{Sal84a}, Bogomolov \cite{Bog88}}] \label{thSB}
Let $p$ be any prime and $k$ be any algebraically closed field
with ${\rm char}$ $k\ne p$.\\
{\rm (1) (Saltman \cite[Theorem 3.6]{Sal84a})} 
There exists a meta-abelian group $G$ of order $p^9$
such that $B_0(G)\ne 0$. 
In particular, $k(G)$ is not $($retract, stably$)$ $k$-rational;\\
{\rm (2) (Bogomolov \cite[Lemma 5.6]{Bog88})} 
There exists a group $G$ of order $p^6$ such that $B_0(G)\ne 0$. 
In particular, $k(G)$ is not $($retract, stably$)$ $k$-rational.
\end{theorem}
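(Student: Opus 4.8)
The plan is to reduce both parts to the non-vanishing of the Bogomolov multiplier and then invoke a suitable explicit group. Since $k$ is algebraically closed we have ${\rm Br}(k)=0$, so Proposition \ref{propSal} says that a retract $k$-rational field $K$ satisfies ${\rm Br}_{\rm nr}(K/k)=0$, while Theorem \ref{thBog} gives ${\rm Br}_{\rm nr}(k(G)/k)\cong B_0(G)$. Hence in each case it is enough to produce a finite group $G$ of the stated order --- meta-abelian in case (1) --- with $B_0(G)\neq 0$: then $k(G)$ is not retract $k$-rational, and by the implications ``$k$-rational $\Rightarrow$ stably $k$-rational $\Rightarrow$ retract $k$-rational'' recorded above (valid since $k$ is infinite), $k(G)$ is neither stably $k$-rational nor $k$-rational.

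To arrange $B_0(G)\neq 0$ I would work inside the family of finite $p$-groups of nilpotency class $2$ --- these are automatically meta-abelian --- with $V:=G/[G,G]$ and $Z:=[G,G]=Z(G)$ elementary abelian; such a $G$ is encoded by its surjective commutator pairing $\wedge^2 V\twoheadrightarrow Z$, equivalently by the subspace $R:=\ker(\wedge^2 V\to Z)\subseteq\wedge^2 V$, and $|G|=p^{\,\dim V+\dim\wedge^2 V-\dim R}$. To decide whether $B_0(G)=\bigcap_A\ker\bigl({\rm res}\colon H^2(G,\bQ/\bZ)\to H^2(A,\bQ/\bZ)\bigr)$ vanishes I would analyse $H^2(G,\bQ/\bZ)$ through the Lyndon--Hochschild--Serre spectral sequence of $1\to Z\to G\to V\to 1$: the transgression $Z^{*}=H^1(Z,\bQ/\bZ)\to H^2(V,\bQ/\bZ)$ is dual to $\wedge^2 V\twoheadrightarrow Z$, so inflation embeds $R^{*}\cong H^2(V,\bQ/\bZ)/R^{\perp}$ into $H^2(G,\bQ/\bZ)$. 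Restricting such an inflated class to a bicyclic subgroup $A\leq G$: cyclic subgroups impose no condition ($H^2$ of a cyclic group is $0$), whereas a rank-two $A$ projects onto a plane $\langle v,w\rangle\subseteq V$ with $v\wedge w\in R$ --- exactly the relation that makes $A$ abelian --- and the restriction vanishes iff the corresponding functional annihilates $v\wedge w$. Thus the inflated part of $B_0(G)$ is $(R/R_{\rm dec})^{*}$, where $R_{\rm dec}\subseteq R$ is spanned by the decomposable vectors $v\wedge w$ lying in $R$; one then has to examine the surviving non-inflated classes (in $E_\infty^{1,1}$) to see which of them also restrict trivially to every bicyclic subgroup. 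The outcome is Bogomolov's purely linear-algebraic criterion for $B_0(G)$ in terms of $R$, and it suffices, in particular, to find $V$ and $R$ with $R$ not generated by its decomposable elements.

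It then remains to write down such $V$ and $R$ --- equivalently, explicit commutation relations (and, when $p=2$, squaring relations) --- with $|G|$ as small as the statement requires. For part (2) this is precisely Bogomolov's \cite[Lemma 5.6]{Bog88}: a concrete presentation on a few generators yielding a group of order $p^6$ for which the criterion gives $B_0(G)\neq 0$. For part (1) Saltman in \cite[Theorem 3.6]{Sal84a} produces a meta-abelian group of order $p^9$ by a direct computation with the unramified Brauer group that predates Theorem \ref{thBog}; given that theorem one may equally take Bogomolov's smaller group, since $B_0\neq 0$ at order $p^6$ persists for suitable larger $p$-groups. The genuinely hard part is this last step: exhibiting an \emph{explicit} configuration provably not spanned by decomposable $2$-vectors while keeping the order minimal --- such configurations exist because the decomposable vectors, although they form a hypersurface that meets every sufficiently large subspace, need not span it --- together with the extra care needed when $p=2$, where one must allow $G$ to have exponent larger than $2$, so that the squaring map enters the extension data alongside the commutator pairing and the analysis of both $H^2(G)$ and of the rank-two subgroups becomes correspondingly more delicate. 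This $p=2$ delicacy is precisely what makes the small $2$-group cases (orders $\leq 64$, and then $128$) require the dedicated, case-by-case treatment carried out in the present paper.
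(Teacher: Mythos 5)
The paper does not actually prove this theorem --- it is quoted from Saltman and Bogomolov with references --- and your reduction of the ``in particular'' clause via Proposition \ref{propSal} and Theorem \ref{thBog} (together with the chain $k$-rational $\Rightarrow$ stably $k$-rational $\Rightarrow$ retract $k$-rational) is exactly the logic the paper's surrounding discussion supplies, while the existence of the groups is deferred to the same sources the paper cites. Your sketch of Bogomolov's linear-algebraic criterion (nilpotency-class-$2$ groups, $R=\ker(\wedge^2 V\to Z)$ versus the span of its decomposable vectors, with extra care at $p=2$) is an accurate outline of how those sources manufacture the examples, so the proposal is correct and takes essentially the same route as the paper.
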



Colliot-Th\'el\`ene and Ojanguren \cite{CTO89} 
generalized the notion of the unramified Brauer group 
${\rm Br}_{\rm nr}(K/k)$ to the unramified cohomology 
$H_{\rm nr}^i(K/k,\mu_n^{\otimes j})$ of degree $i\geq 1$, 
that is $F_n^{i,j}(K/k)$ in \cite[Definition 1.1]{CTO89}.

\begin{definition}[{Colliot-Th\'el\`ene and Ojanguren \cite{CTO89}, see also \cite[Sections 2--4]{CT95}}] 
Let $n$ be a positive integer and $k$ be an algebraically closed field 
with {\rm char} $k=0$ or {\rm char} $k=p$ with $p {\not |}$ $n$. 
Let $K/k$ be a function field, that is finitely generated as a field over $k$. 
The {\it unramified cohomology group} $H^i_{\rm nr}(K/k,\mu_n^{\otimes j})$ 
of $K$ over $k$ of degree $i\geq 1$ is defined to be 
\[
H^i_{\rm nr}(K/k,\mu_n^{\otimes j})=\bigcap_R {\rm Image} 
\{ H^i_{\rm \acute{e}t}(R,\mu_n^{\otimes j})\to H^i_{\rm \acute{e}t}(K,\mu_n^{\otimes j})\}
\]
where $R$ runs over all the discrete valuation rings $R$ of rank one 
such that $k\subset R\subset K$ and $K$ is the quotient field of $R$.
We write just $H^i_{\rm nr}(K,\mu_n^{\otimes j})$ when the base field $k$ is clear.
\end{definition}

Note that the unramified cohomology groups of degree two are isomorphic to 
the $n$-torsion part of the unramified Brauer group: 
${}_n{\rm Br}_{\rm nr}(K/k)\simeq H_{\rm nr}^2(K/k,\mu_n)$.

\begin{proposition} 
Let $k$ be an algebraically closed field 
with {\rm char} $k=0$ or {\rm char} $k=p$ with $p {\not |}$ $n$.\\
{\rm (1) (Colliot-Th\'el\`ene and Ojanguren \cite[Proposition 1.2]{CTO89})} 
If $K$ and $L$ are stably $k$-isomorphic, then 
$H_{\rm nr}^i(K/k,\mu_n^{\otimes j}) \overset{\sim}{\longrightarrow} H_{\rm nr}^i(L/k,\mu_n^{\otimes j})$.
In particular, 
$K$ is stably $k$-rational, then $H_{\rm nr}^i(K/k,\mu_n^{\otimes j})=0$;\\
{\rm (2) (\cite[Proposition 2.15]{Mer08}, 
see also \cite[Remarque 1.2.2]{CTO89}, \cite[Sections 2--4]{CT95}, 
\cite[Example 5.9]{GS10})} 
If $K$ is retract $k$-rational, then $H_{\rm nr}^i(K/k,\mu_n^{\otimes j})=0$.
\end{proposition}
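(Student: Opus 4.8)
The plan is to reduce both statements to two structural properties of unramified cohomology, each a consequence of the Gersten (Bloch--Ogus--Gabber) resolution for \'etale cohomology with coefficients in $\mu_n^{\otimes j}$ on smooth $k$-varieties: \emph{(A) functoriality for field inclusions}, i.e.\ any inclusion $K\hookrightarrow L$ of function fields over $k$ induces a restriction map $H^i_{\rm nr}(K/k,\mu_n^{\otimes j})\to H^i_{\rm nr}(L/k,\mu_n^{\otimes j})$; and \emph{(B) homotopy invariance}, i.e.\ the pull-back $H^i_{\rm nr}(F/k,\mu_n^{\otimes j})\to H^i_{\rm nr}(F(t)/k,\mu_n^{\otimes j})$ is an isomorphism for every function field $F/k$. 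For (A) I would argue as follows: if $\mathcal{O}$ is a rank-one discrete valuation ring of $L$ with $k\subseteq\mathcal{O}$, then $\mathcal{O}\cap K$ is either all of $K$ or again a rank-one discrete valuation ring of $K$ containing $k$ (the value group of the restricted valuation embeds into $\bZ$), and in either case a diagram chase over the resulting commuting square of rings shows that a class lying in the image of $H^i_{\rm \acute{e}t}$ of every discrete valuation ring of $K/k$ has image in that of $H^i_{\rm \acute{e}t}(\mathcal{O})$; thus $H^i_{\rm nr}(K/k,\mu_n^{\otimes j})$ maps into $H^i_{\rm nr}(L/k,\mu_n^{\otimes j})$. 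This uses only the ``image of $H^i_{\rm \acute{e}t}(R)$'' formulation built into the definition, so no purity statement for possibly non-divisorial valuations is needed.

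For (B), injectivity would come from the specialization $t\mapsto 0$: a class unramified at $F[t]_{(t)}$ lifts uniquely to $H^i_{\rm \acute{e}t}(F[t]_{(t)})$, and restricting the lift to the residue field $F$ gives a left inverse of the pull-back. Surjectivity would come from the Faddeev exact sequence for $\mathbb{P}^1_F$, which identifies the classes on $F(t)$ with vanishing residue at every closed point of $\mathbb{P}^1_F$ with the image of $H^i_{\rm \acute{e}t}(F)$; given such a class $\pi^*\eta$, I would then check that $\eta$ is unramified over $k$ by restricting to the Gauss extension $\widetilde{R}=R[t]_{\mathfrak{m}_R R[t]}$ of each discrete valuation ring $R$ of $F/k$, using the compatibility $\partial_{\widetilde{R}}(\pi^*\eta)=\pi^*(\partial_R\eta)$ together with the injectivity of $H^{i-1}_{\rm \acute{e}t}(E)\hookrightarrow H^{i-1}_{\rm \acute{e}t}(E(t))$. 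Granting (A) and (B), part (1) is then formal: iterating (B) gives $H^i_{\rm nr}(F(t_1,\ldots,t_m)/k,\mu_n^{\otimes j})\simeq H^i_{\rm nr}(F/k,\mu_n^{\otimes j})$, and applying this to both sides of a $k$-isomorphism $K(y_1,\ldots,y_m)\simeq L(z_1,\ldots,z_n)$---and using that $H^i_{\rm nr}$ depends only on the $k$-isomorphism type of the extension---yields the isomorphism $H^i_{\rm nr}(K/k,\mu_n^{\otimes j})\overset{\sim}{\to}H^i_{\rm nr}(L/k,\mu_n^{\otimes j})$; if moreover $K$ is stably $k$-rational, it is stably $k$-isomorphic to $k$, and $H^i_{\rm nr}(k/k,\mu_n^{\otimes j})=H^i_{\rm \acute{e}t}(\operatorname{Spec}k,\mu_n^{\otimes j})=0$ for $i\ge 1$ because $k$ is algebraically closed.

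For part (2), I would first unwind Saltman's definition of retract rationality: there are a $k$-algebra $A\subseteq K$ with $\operatorname{Frac}(A)=K$, a nonzero $f\in k[x_1,\ldots,x_n]$, and $k$-algebra maps $\varphi\colon A\to B:=k[x_1,\ldots,x_n][1/f]$ and $\psi\colon B\to A$ with $\psi\circ\varphi=1_A$. Then $\psi$ is surjective and $\varphi$ injective, so $A\simeq B/\ker\psi$ is a finitely generated domain; after replacing $A$ by a localization $A[1/g]$ smooth over the perfect field $k$ and $B$ by $B[1/\varphi(g)]$---again of the form $k[x_1,\ldots,x_n][1/f']$---and the maps by their evident localizations (noting $\psi(\varphi(g))=g$), I may assume $X:=\operatorname{Spec}A$ is a smooth integral affine $k$-variety with $k(X)=K$ and $Y:=\operatorname{Spec}B$ is a smooth open subscheme of $\mathbb{A}^n_k$; let $p\colon Y\to X$ and $s\colon X\to Y$ be the morphisms induced by $\varphi$ and $\psi$, so that $p\circ s=\operatorname{id}_X$ and $p$ is dominant. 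Now take $\xi\in H^i_{\rm nr}(K/k,\mu_n^{\otimes j})$. Since $\xi$ has vanishing residue along every divisor of $X$---each such local ring being a discrete valuation ring containing $k$ with quotient field $K$---the Gersten resolution for $X$ exhibits $\xi$ as the restriction of a unique class $\widetilde{\xi}\in H^i_{\rm \acute{e}t}(X,\mu_n^{\otimes j})$. Set $\beta:=p^*\widetilde{\xi}\in H^i_{\rm \acute{e}t}(Y,\mu_n^{\otimes j})$. On one hand, $s^*\beta=s^*p^*\widetilde{\xi}=\widetilde{\xi}$ by contravariance of \'etale cohomology; on the other hand, the restriction of $\beta$ to the generic point of $Y$ (whose residue field is $k(x_1,\ldots,x_n)$) equals the image of $\xi$ under the inclusion $K\hookrightarrow k(x_1,\ldots,x_n)$ induced by $\varphi$, which by (A) and part (1) lies in $H^i_{\rm nr}(k(x_1,\ldots,x_n)/k,\mu_n^{\otimes j})=0$. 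Hence $\beta$ restricts to $0$ at the generic point of the smooth variety $Y$, so $\beta=0$ by the injectivity $H^i_{\rm \acute{e}t}(Y,\mu_n^{\otimes j})\hookrightarrow H^i_{\rm \acute{e}t}(k(x_1,\ldots,x_n),\mu_n^{\otimes j})$ coming from the Gersten resolution; therefore $\widetilde{\xi}=s^*\beta=0$ and $\xi=0$. This proves $H^i_{\rm nr}(K/k,\mu_n^{\otimes j})=0$.

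The single substantial input, used at essentially every step, is the Gersten (Bloch--Ogus--Gabber) resolution for \'etale cohomology with $\mu_n^{\otimes j}$-coefficients on smooth $k$-varieties---concretely, the injectivity $H^i_{\rm \acute{e}t}(X,\mu_n^{\otimes j})\hookrightarrow H^i_{\rm \acute{e}t}(k(X),\mu_n^{\otimes j})$ for smooth $X$, the Faddeev exact sequence for $\mathbb{P}^1_F$, and the compatibility of residue maps with Gauss extensions. I expect this to be the ``hard part'' only in the sense that it is the sole deep ingredient; once it is taken as known (it is available in the cited literature, e.g.\ \cite{CTO89}, \cite{CT95}, \cite{Mer08}), the rest of the argument is a formal manipulation of functoriality. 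The only further points demanding a little care are checking that passing to a smooth model of $\operatorname{Spec}A$ does not destroy the retract data of Saltman's definition, and observing that the definition's allowance of arbitrary (possibly non-divisorial) rank-one valuations causes no difficulty---both of which are handled in the sketch above.
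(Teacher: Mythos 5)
This proposition is quoted in the paper from the literature (Colliot-Th\'el\`ene--Ojanguren and Merkurjev) and is given no proof there, so your proposal can only be compared with the standard arguments in the cited references. Your part (1) is essentially correct and is the standard route: functoriality (A) via restricting a rank-one discrete valuation of $L$ to $K$ (where it is trivial or again discrete of rank one), and homotopy invariance (B) via specialization at $t=0$, the Faddeev sequence for $\mathbb{P}^1_F$, and Gauss extensions; the technical points you defer (purity and residues for possibly non-divisorial rank-one valuations, and the equivalence of the ``image of $H^i_{\rm \acute{e}t}(R)$'' and ``vanishing residue'' formulations) are genuine but are exactly what \cite{CT95} supplies.

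Part (2), however, has a concrete gap: you invoke ``the Gersten resolution'' to claim both that the unramified class $\xi$ lifts uniquely to $H^i_{\rm \acute{e}t}(X,\mu_n^{\otimes j})$ for the \emph{global} smooth affine $X$, and that $H^i_{\rm \acute{e}t}(Y,\mu_n^{\otimes j})\hookrightarrow H^i_{\rm \acute{e}t}(k(Y),\mu_n^{\otimes j})$ for the \emph{global} smooth $Y\subseteq\mathbb{A}^n_k$. Neither statement is what Bloch--Ogus--Gabber gives. The Gersten resolution identifies the classes of $H^i(k(X),\mu_n^{\otimes j})$ unramified along divisors of $X$ with $H^0_{\rm Zar}(X,\mathcal{H}^i)$, and the comparison with $H^i_{\rm \acute{e}t}(X)$ goes through the coniveau spectral sequence $E_2^{p,q}=H^p_{\rm Zar}(X,\mathcal{H}^q)\Rightarrow H^{p+q}_{\rm \acute{e}t}(X)$: the map $H^i_{\rm \acute{e}t}(X)\to H^i(k(X))$ has kernel the first step of the coniveau filtration (for $i=2$ this kernel is $\operatorname{Pic}(X)/n$, which is nonzero for many smooth affine $X$) and image $E_\infty^{0,i}$, in general a proper subgroup of $E_2^{0,i}=H^0_{\rm Zar}(X,\mathcal{H}^i)$; so existence and uniqueness of $\widetilde{\xi}$, and the injectivity for $Y$, can all fail. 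Injectivity and the identification with unramified classes hold only for \emph{semi-local} rings of smooth varieties. The repair keeps your strategy intact: forget $\widetilde{\xi}$ and $H^i_{\rm \acute{e}t}$ of global varieties, and instead localize $Y$ at the point $s(\eta_X)$. Its local ring $\mathcal{O}=B_{\ker\psi}$ is a regular local ring of a smooth $k$-variety with residue field $K$ and fraction field $k(x_1,\ldots,x_n)$, and since $\varphi^{-1}(\ker\psi)=0$ there is a ring map $K\to\mathcal{O}$ whose composite with $\mathcal{O}\to\mathcal{O}/\mathfrak{m}=K$ is the identity. Pulling $\xi$ back to $\xi_{\mathcal{O}}\in H^i_{\rm \acute{e}t}(\mathcal{O},\mu_n^{\otimes j})$, its image in $H^i(k(x_1,\ldots,x_n))$ is the restriction of $\xi$ along $K\hookrightarrow k(x_1,\ldots,x_n)$, which vanishes by (A) and part (1); Gersten injectivity $H^i_{\rm \acute{e}t}(\mathcal{O})\hookrightarrow H^i_{\rm \acute{e}t}(k(x_1,\ldots,x_n))$, valid for this local ring, gives $\xi_{\mathcal{O}}=0$, and specializing to the residue field gives $\xi=0$. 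This is in substance the argument of \cite[Proposition 2.15]{Mer08} and \cite[Sections 2--4]{CT95}.
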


Colliot-Th\'el\`ene and Ojanguren 
\cite[Section 3]{CTO89} 
produced the first example of not stably $\bC$-rational but $\bC$-unirational field $K$ 
with $H_{\rm nr}^3(K,\mu_2^{\otimes 3})\neq 0$, 
where $K$ is the function field of 
a quadric of the type $\langle\langle f_1,f_2\rangle\rangle=\langle g_1g_2\rangle$
over the rational function field $\bC(x,y,z)$ 
with three variables $x,y,z$ 
for a $2$-fold Pfister form $\langle\langle f_1,f_2\rangle\rangle$, 
as a generalization of Artin and Mumford \cite{AM72}. 
Peyre \cite[Corollary 3]{Pey93} gave a sufficient condition for 
$H_{\rm nr}^i(K/k,\mu_p^{\otimes i})\neq 0$ and 
produced an example of the function field $K$ with 
$H_{\rm nr}^3(K/k,\mu_p^{\otimes 3})\neq 0$ and ${\rm Br}_{\rm nr}(K/k)=0$ 
using a result of Susulin \cite{Sus91} 
where $K$ is the function field of 
a product of some norm varieties 
associated to cyclic central simple algebras of degree $p$ 
(see \cite[Proposition 7]{Pey93}). 
Using a result of Jacob and Rost \cite{JR89}, 
Peyre \cite[Proposition 9]{Pey93} 
also gave an example of $H_{\rm nr}^4(K/k,\mu_2^{\otimes 4})\neq 0$ 
and ${\rm Br}_{\rm nr}(K/k)=0$ 
where $K$ is the function field of a product of quadrics associated to 
a $4$-fold Pfister form $\langle\langle a_1,a_2,a_3,a_4\rangle\rangle$ 
(see also \cite[Section 4.2]{CT95}).

Take the direct limit with respect to $n$: 
\[
H^i(K/k,\bQ/\bZ(j))=
\lim_{\overset{\longrightarrow}{n}}H^i(K/k,\mu_n^{\otimes j})
\]
and we also define the unramified cohomology group 
\[
H_{\rm nr}^i(K/k,\bQ/\bZ(j))
=\bigcap_R {\rm Image} 
\{H^i_{\rm \acute{e}t}(R,\bQ/\bZ(j))\to H^i_{\rm \acute{e}t}(K,\bQ/\bZ(j))\}.
\]
Then we have ${\rm Br}_{\rm nr}(K/k)\simeq H_{\rm nr}^2(K/k,\bQ/\bZ(1))$. 

Peyre \cite{Pey08} was able to construct an example of a field $K$, as $K=\bC(G)$, 
whose unramified Brauer group vanishes, but 
unramified cohomology of degree three does not vanish:

\begin{theorem}[{Peyre \cite[Theorem 3]{Pey08}}] \label{thPeyre}
Let $p$ be any odd prime.
There exists a $p$-group $G$ of order $p^{12}$ such that 
$B_0(G)=0$ and 
$H_{\rm nr}^3(\bC(G),\bQ/\bZ)\ne 0$.
In particular, $\bC(G)$ is not $($retract, stably$)$ $\bC$-rational.
\end{theorem}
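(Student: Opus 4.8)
The plan is to follow the two-pronged strategy behind Peyre's theorem: first establish a group-cohomological lower bound for the degree-three unramified cohomology of $\bC(G)$, and then exhibit a $p$-group of order $p^{12}$ that simultaneously makes this bound non-zero and has vanishing Bogomolov multiplier.

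First I would set up the framework. By definition $\bC(G)=\bC(V)^G$ for $V$ the regular representation, so $\bC(G)$ is the function field of a smooth projective model $X$ of $V/G$; by the no-name lemma we may replace $V$ by any faithful representation without affecting $\bC(G)$ up to stable $\bC$-isomorphism, and by the Proposition above on unramified cohomology the groups $H^i_{\rm nr}$ are stable birational invariants. Over the algebraically closed field $\bC$ one has $\mu_n^{\otimes j}\simeq\bZ/n$, so $H^3_{\rm nr}(\bC(G),\bQ/\bZ)$ is identified with $H^3_{\rm nr}(\bC(G),\bQ/\bZ(2))$. Recall that in degree two Bogomolov's formula (Theorem \ref{thBog}) identifies ${\rm Br}_{\rm nr}(\bC(G))$ with the subgroup $B_0(G)\subseteq H^2(G,\bQ/\bZ)$ of classes with trivial restriction to every bicyclic subgroup. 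The analogue I would need in degree three: using the Gersten--Bloch--Ogus resolution on $X$ together with the Hochschild--Serre spectral sequence and its compatibility with residue maps, one produces a computable subgroup $\mathcal H^3(G)\subseteq H^3(G,\bQ/\bZ(2))$ --- cut out by the vanishing of all residues along the divisorial valuations of $\bC(V)^G$, equivalently by a restriction condition to the abelian subgroups modulo decomposable cup-products --- that maps injectively into $H^3_{\rm nr}(\bC(G)/\bC,\bQ/\bZ(2))$. This reduces the theorem to the group-theoretic task of finding a $p$-group $G$ with $\mathcal H^3(G)\neq 0$ and $B_0(G)=0$.

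Next I would construct $G$ as an iterated central extension $1\to Z\to G\to Q\to 1$ with $Z$ central elementary abelian, engineered so that its low-degree cohomology carries a triple symbol $\alpha\cup\beta\cup\gamma$ of degree-one classes (or a suitable sum of such symbols) that is non-decomposable and lies in $\mathcal H^3(G)$. The model to imitate is Peyre's earlier example \cite{Pey93}: one chooses $Q$ so that $\bC(V)^Q$ is rational and supports sufficiently many independent cyclic central simple algebras of degree $p$ (here $p$ must be odd), and then the central extension turns $\bC(G)$, up to stable $\bC$-isomorphism, into the function field of a product of the associated norm varieties --- exactly the field for which \cite{Pey93}, via Suslin's computation \cite{Sus91}, yields $H^3_{\rm nr}\neq 0$ together with ${\rm Br}_{\rm nr}=0$. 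Accounting for the degrees of freedom --- three symbol slots, each needing a cyclic-algebra obstruction, a central layer to carry the required relations, and a further layer forcing every degree-two class to be detected on a bicyclic subgroup --- leads to the bound $p^{12}$ and pins $G$ down up to the explicit presentation one records.

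Finally I would carry out the two verifications. For $B_0(G)=0$: apply Bogomolov's criterion (Theorem \ref{thBog}) and check from the explicit commutator relations of $G$ that every $x\in H^2(G,\bQ/\bZ)$ with trivial restriction to all bicyclic subgroups vanishes --- a finite calculation. For $\mathcal H^3(G)\neq 0$: show the designated class is non-zero and non-decomposable in $H^3(G,\bQ/\bZ(2))$ and that all of its residues along the divisors of $X$ vanish; through the norm-variety description and Suslin's result \cite{Sus91} this last point reduces to the vanishing of a concrete Galois-cohomology obstruction. The ``In particular'' clause is then immediate, since the Proposition above gives $H^i_{\rm nr}=0$ for retract $\bC$-rational fields and $\bC$-rational $\Rightarrow$ stably $\bC$-rational $\Rightarrow$ retract $\bC$-rational. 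I expect the main obstacle to be the tension between the two requirements --- $G$ must be large, with cohomology rich enough to carry a genuinely non-decomposable unramified class in degree three, yet constrained enough that every degree-two class is bicyclic-detectable --- and, within that, the residue computation certifying that the triple symbol is unramified, which is the technical heart of the argument.
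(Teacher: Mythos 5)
The first thing to note is that the paper contains no proof of this statement: Theorem \ref{thPeyre} is quoted from Peyre \cite{Pey08} purely as background for the discussion of higher unramified cohomology, so there is no in-paper argument to compare your proposal against. Taken as a reconstruction of Peyre's own proof, your outline does identify the right ingredients: a group-cohomological lower bound for $H^3_{\rm nr}(\bC(G),\bQ/\bZ)$ compatible with residues (a partial degree-$3$ analogue of Theorem \ref{thBog}), a two-step nilpotent central extension with elementary abelian kernel and quotient, the passage to function fields of products of norm varieties attached to cyclic algebras of odd prime degree so that Suslin's computation \cite{Sus91} applies (which is indeed where the hypothesis that $p$ is odd enters), and a verification of $B_0(G)=0$ via the bicyclic criterion.

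As a proof, however, there is a genuine gap: the group $G$ is never constructed, and both key verifications are named rather than performed. The clause ``accounting for the degrees of freedom \dots\ leads to the bound $p^{12}$ and pins $G$ down'' is not an argument; the entire content of Peyre's theorem is that the two constraints --- a non-decomposable class surviving unramified in degree $3$, while every degree-$2$ class dies on some bicyclic subgroup --- can be met simultaneously, and in \cite{Pey08} this is established by writing down an explicit central extension of an elementary abelian group of rank $6$ by a $6$-dimensional quotient of its exterior square and carrying out both computations in full. Likewise, the existence of a computable subgroup of $H^3(G,\bQ/\bZ(2))$ injecting into $H^3_{\rm nr}(\bC(G),\bQ/\bZ)$ is itself one of the main theorems of \cite{Pey08}, not a routine consequence of Hochschild--Serre together with the Gersten resolution, and your proposal assumes it rather than proving it. Only the final ``in particular'' clause is genuinely immediate once the two main claims are granted.
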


Asok \cite{Aso13} generalized Peyre's argument \cite{Pey93} and 
established the following theorem 
for a smooth proper model $X$ (resp. a smooth projective model $Y$) 
of the function field of a product of 
quadrics of the type $\langle\langle s_1,\cdots,s_n\rangle\rangle=\langle s_n\rangle$ 
(resp. Rost varieties) 
over some rational function field over $\bC$ with many variables.

\vspace*{4mm}
\begin{theorem}[{Asok \cite{Aso13}, see also \cite[Theorem 3]{AM11} for retract $\bC$-rationality}]\label{thAsok}{}~{}\\
{\rm (1)} $($\cite[Theorem 1]{Aso13}$)$ 
For any $n>0$, 
there exists a smooth projective complex variety $X$ that is 
$\bC$-unirational, for which 
$H_{\rm nr}^i(\bC(X),\mu_2^{\otimes i})=0$ for each $i<n$, yet
$H_{\rm nr}^n(\bC(X),\mu_2^{\otimes n})\neq 0$, and so 
$X$ is not $\mathbb{A}^1$-connected, 
nor $($retract, stably$)$ $\bC$-rational;\\
{\rm (2)} $($\cite[Theorem 3]{Aso13}$)$ 
For any prime $l$ and any $n\geq 2$, 
there exists a smooth projective rationally connected complex 
variety $Y$ such that 
$H_{\rm nr}^n(\bC(Y),\mu_l^{\otimes n})\neq 0$. 
In particular, $Y$ is not $\mathbb{A}^1$-connected, 
nor $($retract, stably$)$ $\bC$-rational. 
\end{theorem}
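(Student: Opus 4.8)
The plan is to reduce both parts to the construction, for each $n$, of a function field $K_n/\bC$ with prescribed unramified cohomology, and then to produce $K_n$ as an iterated function field of quadrics (for $\ell=2$), resp. of Rost norm varieties (for a general prime $\ell$), over a purely transcendental extension of $\bC$, following the strategy of Artin--Mumford \cite{AM72}, Colliot-Th\'el\`ene--Ojanguren \cite{CTO89} and Peyre \cite{Pey93, Pey08}. Over $\bC$ every function field admits a smooth projective model (Hironaka), and for a smooth proper variety $X$ over $\bC$ the field-theoretic group $H^i_{\rm nr}(\bC(X)/\bC,\mu_\ell^{\otimes j})$ recalled above agrees with the subgroup of $H^i_{\rm \acute{e}t}(\bC(X),\mu_\ell^{\otimes j})$ that is unramified along the divisors of $X$ and is a birational invariant; hence it suffices to produce, for each $n>0$, a $\bC$-unirational field $K_n$ with $H^i_{\rm nr}(K_n,\mu_2^{\otimes i})=0$ for all $i<n$ and $H^n_{\rm nr}(K_n,\mu_2^{\otimes n})\neq 0$, and, for each prime $\ell$ and each $n\geq 2$, the function field $K_n$ of a rationally connected variety with $H^n_{\rm nr}(K_n,\mu_\ell^{\otimes n})\neq 0$. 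Granting these, the non-rationality assertions follow from Proposition \ref{propSal} and its higher-degree analogue (retract $\bC$-rational implies all $H^i_{\rm nr}$ vanish), and the failure of $\mathbb{A}^1$-connectedness follows because unramified cohomology is a strictly $\mathbb{A}^1$-invariant sheaf cohomology depending only on $\pi_0^{\mathbb{A}^1}$, so it vanishes in positive degree on any $\mathbb{A}^1$-connected smooth $\bC$-variety.

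For $\ell=2$, fix $N\gg n$, set $k=\bC(x_1,\dots,x_N)$, and consider the symbol $\alpha=(x_1)\cup\cdots\cup(x_n)\in H^n(k,\mu_2^{\otimes n})$. I would take $K_n$ to be the composite of $k$ with the function fields of a finite family of Pfister quadrics $\langle\langle s_1,\dots,s_j\rangle\rangle=\langle s_j\rangle$, with the $s_i$ suitable monomials in the $x_i$, chosen so that (a) every class in degree $<n$ that could survive as an unramified class is split over $K_n$, forcing $H^i_{\rm nr}(K_n,\mu_2^{\otimes i})=0$ for $i<n$, while (b) $\alpha_{K_n}\neq 0$. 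Controlling (b) is exactly where the resolved Milnor conjecture enters: by Voevodsky--Rost one has $K^M_*(F)/2\xrightarrow{\sim}H^*(F,\mu_2^{\otimes *})$, and by Orlov--Vishik--Voevodsky and Karpenko--Merkurjev one knows precisely which elements of $K^M_*/2$ become trivial upon passage to the function field of a Pfister quadric, so one can prune the ``lower'' obstructions one at a time without killing $\alpha$. The remaining point --- that $\alpha_{K_n}$ is unramified --- is checked divisor by divisor on a smooth model $X_n$ of $K_n$: a divisor either dominates a divisor on a model of $k$, where the residue of $\alpha_{K_n}$ is the restriction of a residue of $\alpha$, which by construction is trivial or becomes trivial in the rational residue field, or it is an exceptional component in a degenerate fibre of one of the quadric fibrations, where the residue is computed through the second residue homomorphism of the relevant quadratic form and is shown to vanish by the Colliot-Th\'el\`ene--Ojanguren computation \cite{CTO89}, fed inductively by the vanishing of $H^{<n}_{\rm nr}$.

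For a general prime $\ell$ one runs the same scheme with Rost's $\nu_{n-1}$-varieties (norm varieties attached to the symbol $(x_1)\cup\cdots\cup(x_n)\in H^n(k,\mu_\ell^{\otimes n})$) in place of Pfister quadrics: such a variety is generically split, and by the Suslin--Joukhovitski construction together with Rost's degree formula and the Bloch--Kato theorem it kills the chosen symbol over its function field while its exceptional residues remain controlled, exactly as in the $\ell=2$ case; this is the mechanism already exploited by Peyre \cite{Pey93} through Suslin \cite{Sus91}. Since products of such quadrics and norm varieties, and their composites with purely transcendental extensions, can be arranged to be rationally connected --- and, in the presentation used for part~(1), visibly $\bC$-unirational --- one obtains the varieties $Y$ and $X$ of the statement. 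I expect the genuinely hard part to be twofold: first, arranging the tower so that unramified cohomology in \emph{every} degree $<n$, not merely in the symbols one writes down explicitly, really does vanish over $K_n$; and second, the residue computation at the exceptional divisors of the quadric/norm-variety fibrations, which requires the full strength of the Bloch--Kato--Milnor package (the behaviour of motivic cohomology under extension to function fields of quadrics and of Rost varieties, after Orlov--Vishik--Voevodsky, Karpenko--Merkurjev and Suslin--Joukhovitski). By comparison, the construction of a model on which all the relevant residues can be named, and the $\mathbb{A}^1$-homotopy-theoretic translation of the conclusions via Morel's $\mathbb{A}^1$-homology sheaves, are comparatively formal once those inputs are in hand.
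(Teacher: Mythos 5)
This statement is not proved in the paper at all: Theorem \ref{thAsok} is an imported result, quoted from Asok's paper \cite{Aso13} (with \cite{AM11} for the retract-rationality refinement), and the only ``proof'' the paper offers is the one-sentence description preceding the theorem, namely that Asok generalizes Peyre's argument using smooth proper models of function fields of products of quadrics of type $\langle\langle s_1,\dots,s_n\rangle\rangle=\langle s_n\rangle$ (resp.\ Rost varieties) over rational function fields over $\bC$. Your outline correctly identifies exactly that construction and the correct supporting cast (Colliot-Th\'el\`ene--Ojanguren residues, Peyre's criterion, Orlov--Vishik--Voevodsky and Karpenko--Merkurjev for the behaviour of mod-$2$ Milnor $K$-theory over function fields of Pfister quadrics, Suslin--Joukhovitski and the Bloch--Kato theorem for norm varieties, and Morel's $\mathbb{A}^1$-homotopy sheaves for the $\mathbb{A}^1$-connectedness consequence), so as a road map it is faithful to how the cited result is actually established.

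However, what you have written is a program, not a proof, and you say so yourself: the two points you flag as ``the genuinely hard part'' --- (a) that \emph{all} of $H^i_{\rm nr}$ for $i<n$ vanishes over $K_n$, not merely the classes generated by the symbols you name, and (b) the residue computation at the exceptional divisors of the quadric/norm-variety fibrations --- are precisely the content of Asok's theorem, and neither is carried out. In particular, step (a) is not a matter of ``pruning lower obstructions one at a time'': there is no a priori finite list of lower-degree unramified classes to kill, and Asok's actual argument controls $H^{<n}_{\rm nr}$ by a structural result on the $\mathbb{A}^1$-homotopy type (the varieties are built so that the relevant unramified sheaves vanish for cohomological-dimension reasons), not by an exhaustion over symbols. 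Likewise the unramifiedness of $\alpha_{K_n}$ at exceptional divisors requires the full second-residue analysis of \cite{CTO89}/\cite{Pey93}, which you invoke but do not perform. Since the paper itself supplies no proof to compare against, the fair verdict is: your sketch is consistent with the known proof strategy, but it cannot be accepted as a proof of the statement; it defers all of the substantive verifications to the very references the theorem is quoted from.
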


Namely, the triviality of the unramified Brauer group or the unramified
cohomology of higher degree is just a necessary condition of
$\bC$-rationality of fields. 
It is unknown whether the vanishing of
all the unramified cohomologies is a sufficient condition for
$\bC$-rationality. 
It is interesting to consider an analog of Theorem \ref{thAsok} 
for quotient varieties $V/G$, e.g. $\bC(V_{\rm reg}/G)=\bC(G)$. \\

{\it The case where $G$ is a group of order $p^5$ $(p\geq 3)$.} 

{}From Theorem \ref{thSB} (2), 
Bogomolov \cite[Remark 1]{Bog88} raised a question 
to classify the groups of order $p^6$ with $B_0(G)\neq 0$. 
He also claimed that if $G$ is a $p$-group of order $\leq p^5$, 
then $B_0(G)=0$ (\cite[Lemma 5.6]{Bog88}). 
However, this claim was disproved by Moravec:

\begin{theorem}[{Moravec \cite[Section 8]{Mor12}}] \label{thMo}
Let $G$ be a group of order $243$. 
Then $B_0(G)\ne 0$ if and only if 
$G=G(3^5,i)$ with $28\le i\le 30$, where $G(3^5,i)$ is the $i$-th
group of order $243$ in the GAP database {\rm \cite{GAP}}. 
Moreover, if $B_0(G)\neq 0$, then $B_0(G)\simeq C_3$.
\end{theorem}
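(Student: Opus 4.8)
The plan is to start from Bogomolov's formula (Theorem \ref{thBog}), which identifies $B_0(G)$ with $\bigcap_A\ker\bigl(\mathrm{res}\colon H^2(G,\bQ/\bZ)\to H^2(A,\bQ/\bZ)\bigr)$, $A$ ranging over the bicyclic subgroups of $G$, and to make this quantity effectively computable. It is convenient to pass to the dual, homological picture: the Schur multiplier $H_2(G,\bZ)\cong H^2(G,\bQ/\bZ)$ embeds in the nonabelian exterior square $G\wedge G$ as $\ker(G\wedge G\to[G,G])$, and one shows (a short duality argument, turning restriction into transfer) that $B_0(G)$ is the quotient of $H_2(G,\bZ)$ by the subgroup $M_0(G)=\langle\, x\wedge y : x,y\in G,\ [x,y]=1\,\rangle$, equivalently by the sum of the images of $H_2(A,\bZ)\to H_2(G,\bZ)$ over all abelian $A\le G$. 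Since $G\wedge G$, and hence $H_2(G,\bZ)$ and $M_0(G)$, can be read off from any polycyclic presentation of $G$, this reduces Theorem \ref{thMo} to a finite verification over the $67$ groups of order $243$.

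The second step is to shrink the case list by isoclinism. The exterior-square description of $B_0(G)$ is invariant under isoclinism (isoclinic groups have stably $\bC$-isomorphic invariant fields, hence the same unramified Brauer group; alternatively this is seen directly from $G\wedge G$ and its commutator map), so it suffices to examine one representative of each of the ten isoclinism families $\Phi_1,\dots,\Phi_{10}$ into which the groups of order $p^5$ ($p$ odd) fall. For every family other than the one consisting of $G(3^5,28)$, $G(3^5,29)$, $G(3^5,30)$ one proves $B_0(G)=0$: this is trivial when $G$ is abelian; when $G'$ is cyclic or $G$ is metacyclic the multiplier is already generated by bicyclic subgroups, so $M_0(G)=H_2(G,\bZ)$; and the few remaining families are dispatched by listing the maximal abelian subgroups $A_i$ and checking that $\sum_i\mathrm{im}\bigl(H_2(A_i,\bZ)\to H_2(G,\bZ)\bigr)$ is everything. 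These are precisely the families in which Bogomolov's original structural argument for the vanishing of $B_0$ is in fact valid.

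For the exceptional family the computation has to be done explicitly (in practice, by computer). Fix a representative $G$ with an explicit polycyclic presentation; compute $H_2(G,\bZ)$ and the subgroup $M_0(G)\le H_2(G,\bZ)$ cut out by the commuting pairs, using the presentations of the maximal abelian subgroups; and verify that the quotient has order $3$, i.e. $B_0(G)\simeq C_3$. It then remains to settle the bookkeeping: first, that $G(3^5,28)$, $G(3^5,29)$, $G(3^5,30)$ are exactly the members of this isoclinism family among the groups of order $243$ — matched to the abstract representative through invariants such as the exponent, the orders of $Z(G)$ and $G'$, and the isomorphism type of $G/Z(G)$ — and second, that the remaining $64$ groups of order $243$ all lie in the vanishing families.

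The main obstacle is the faithful computation of $M_0(G)$ for the exceptional family: one must show that the subgroup of $G\wedge G$ generated by commuting pairs misses $H_2(G,\bZ)$ by an index-$3$ subgroup rather than exhausting it. This is exactly the point at which Bogomolov's heuristic ``$B_0(G)=0$ whenever $|G|\le p^5$'' fails, so it cannot be evaded by a soft argument and demands either a careful direct calculation inside the exterior square or a verified machine computation. A secondary, more tedious difficulty is certifying the vanishing of $B_0$ for the other nine families without reusing that same (generally false) heuristic; the isoclinism reduction is what keeps this case analysis finite and manageable.
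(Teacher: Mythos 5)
Your proposal is correct and follows essentially the same route as the paper's source for this statement: the paper quotes Theorem \ref{thMo} from Moravec, whose proof is exactly the reduction of $B_0(G)$ to the quotient $H_2(G,\bZ)/M_0(G)$ inside the nonabelian exterior square together with a machine verification (the GAP algorithm \textbf{b0g.g}) over the groups of order $243$, and the organization by isoclinism families with $\Phi_{10}$ as the exceptional family is precisely what the paper records via Theorem \ref{thKKu13}. The only caveat is that your ``sum of images of $H_2(A,\bZ)$ over abelian $A$'' description of $M_0(G)$ and the duality step should be stated carefully, but these are standard and match Moravec's formulation.
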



Moravec \cite{Mor12} gave a formula for $B_0(G)$ 
by using a nonabelian exterior square $G \wedge G$ of $G$ 
and an implemented algorithm {\bf b0g.g} in computer algebra system 
GAP \cite{GAP}, which is available from his website
\verb"www.fmf.uni-1j.si/~moravec/b0g.g". 
The number of all solvable groups $G$ of order $\leq 729$ apart 
from the orders $512$, $576$ and $640$ with $B_0(G)\neq 0$ 
was given as in \cite[Table 1]{Mor12}. 


Hoshi, Kang and Kunyavskii \cite{HKK13} determined $p$-groups $G$ 
of order $p^5$ with $B_0(G)\ne 0$ for any $p\geq 3$. 
It turns out that they belong to the same isoclinism family.

\begin{definition}[{Hall \cite[page 133]{Hal40}}] \label{defHall}
Let $G$ be a finite group. 
Let $Z(G)$ be the center of $G$ and 
$[G,G]$ be the commutator subgroup of $G$. 
Two $p$-groups $G_1$ and $G_2$ are called {\it isoclinic} if there exist
group isomorphisms $\theta\colon G_1/Z(G_1) \to G_2/Z(G_2)$ and
$\phi\colon [G_1,G_1]\to [G_2,G_2]$ such that $\phi([g,h])$
$=[g',h']$ for any $g,h\in G_1$ with $g'\in \theta(gZ(G_1))$, $h'\in
\theta(hZ(G_1))$: 
\[\xymatrix{
G_1/Z_1\times G_1/Z_1 \ar[d]_{[\cdot,\cdot]} \ar[r]^{(\theta,\theta)} 
\ar@{}[dr]| \circlearrowleft 
& G_2/Z_2\times G_2/Z_2 \ar[d]_{[\cdot,\cdot]} \\
[G_1, G_1] \ar[r]^\phi & [G_2, G_2].\\
}\]
For a prime $p$ and an integer $n$, 
we denote by $G_n(p)$ the set of all non-isomorphic groups of order $p^n$. 
In $G_n(p)$, consider an equivalence relation: two groups $G_1$ and $G_2$ are
equivalent if and only if they are isoclinic. 
Each equivalence class of $G_n(p)$ is called an {\it isoclinism family}, 
and the $j$-th isoclinism family is denoted by $\Phi_j$.
\end{definition}

For $p\geq 5$ (resp. $p=3$), there exist $2p+61+\gcd\{4,p-1\}+2\gcd\{3,p-1\}$ 
(resp. $67$) groups $G$ of order $p^5$ which are classified into ten 
isoclinism families $\Phi_1,\ldots,\Phi_{10}$ (see \cite[Section 4]{Jam80}). 
The main theorem of \cite{HKK13} can be stated as follows:

\begin{theorem}[{Hoshi, Kang and Kunyavskii \cite[Theorem 1.12]{HKK13}, \cite[page 424]{Kan14}}] \label{thKKu13}
Let $p$ be any odd prime and $G$ be a group of order $p^5$. Then
$B_0(G)\ne 0$ if and only if $G$ belongs to
the isoclinism family $\Phi_{10}$. 
Moreover, if $B_0(G)\neq 0$, then $B_0(G)\simeq C_p$.
\end{theorem}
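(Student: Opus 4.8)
The plan is to reduce the computation of $B_0(G)$ for the (finitely many) groups of order $p^5$ to an explicit check that is uniform in $p$. First I would invoke the isoclinism-invariance of the Bogomolov multiplier: since $B_0(G)$ depends only on the commutator map $G/Z(G)\times G/Z(G)\to [G,G]$ and on which bicyclic subgroups are detected by it, two isoclinic $p$-groups have isomorphic Bogomolov multipliers (this is the content of Bogomolov's original formula in Theorem \ref{thBog} together with the observation that both the target $H^2(G,\bQ/\bZ)$-side and the restriction maps to bicyclic subgroups are controlled by $G/Z(G)$ and $[G,G]$). Hence it suffices to pick one representative group $G$ from each of the ten isoclinism families $\Phi_1,\dots,\Phi_{10}$ of James's classification and compute $B_0$ for those ten groups only. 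Concretely I would take the standard representatives with presentations listed in James \cite{Jam80}, Section~4.

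Second, for the nine families $\Phi_1,\dots,\Phi_9$ I would show $B_0(G)=0$. For each representative $G$ the commutator subgroup $[G,G]$ is small (cyclic of order $p$ or $p^2$, or $C_p\times C_p$), and one can exhibit explicitly a family of abelian subgroups $A_i$ of $G$ whose commutator-pairs generate enough of $H_2(G,\bZ)$ that the simultaneous kernel $\bigcap_i \ker(\mathrm{res}\colon H^2(G,\bQ/\bZ)\to H^2(A_i,\bQ/\bZ))$ vanishes. Equivalently, using Moravec's description of $B_0(G)$ as the kernel of $G\wedge G \to G\otimes G$ restricted to the part not coming from abelian subgroups (the ``nonabelian exterior square'' description referenced after Theorem \ref{thMo}), one checks that every element of the relevant kernel of $G\wedge G\to [G,G]$ lies in the subgroup generated by $g\wedge h$ with $[g,h]=1$. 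This is a finite presentation-level calculation for each of the nine groups; I would organize it by the structure of $[G,G]$ and the action of $G$ on it. In fact, for several of these families the Schur multiplier $H_2(G,\bZ)$ itself is already killed by the commutator-relation subgroup $M_0(G)$, giving $B_0(G)=0$ immediately.

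Third, for $\Phi_{10}$ I would exhibit a single representative $G$ with $B_0(G)\simeq C_p$. Here the key is to produce a nonzero element of $H^2(G,\bQ/\bZ)$ that restricts to zero on every bicyclic subgroup; dually, a $2$-cocycle (or element of $G\wedge G$ lying over $0\in[G,G]$) that is not a sum of $g\wedge h$ with $[g,h]=1$. I would verify this via Moravec's exterior-square formula, computing $G\wedge G$, the map to $[G,G]$, and the subgroup $M_0(G)$ generated by the ``obvious'' commutators, and checking the quotient is exactly $C_p$; since the representative can be chosen with coefficients in $\bZ$ independent of $p$ (the presentation is uniform for $p\ge 3$, with the relevant relations involving only the prime $p$ as an exponent), the same computation goes through for all odd $p$ simultaneously. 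The bound $B_0(G)\le H_2(G,\bZ)$ and the smallness of $H_2(G,\bZ)$ for $|G|=p^5$ then forces $B_0(G)\simeq C_p$ rather than anything larger.

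The main obstacle is the uniformity in $p$: for a fixed small prime this is a finite GAP computation (and indeed matches Moravec's $p=3$ table in Theorem \ref{thMo}), but to conclude for \emph{all} odd $p$ one must either set up the exterior-square computation over $\bZ[1/2]$ with $p$ as a formal parameter in the exponents, or give a presentation-independent cohomological argument for the $\Phi_{10}$ representative. I expect the cleanest route is the former: write down James's parametrized presentations, note that the structure constants of $G\wedge G$ and of $M_0(G)$ are polynomial in $p$ with the answer stabilizing for $p\ge 3$, and then the $p=3$ and $p=5$ cases already pin down the general pattern. Everything else --- the isoclinism reduction and the nine vanishing checks --- is routine once the representatives are fixed.
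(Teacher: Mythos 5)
There is no proof of this theorem in the paper you were given: it is quoted from \cite[Theorem 1.12]{HKK13} and \cite[page 424]{Kan14}, and the paper only remarks that the proof in \cite{HKK13} is ``purely algebraic.'' So I can only assess your outline on its own terms, and it has two genuine gaps. First, your justification of the isoclinism-invariance of $B_0$ is wrong as stated. You claim it follows from Bogomolov's formula because ``both the target $H^2(G,\bQ/\bZ)$-side and the restriction maps are controlled by $G/Z(G)$ and $[G,G]$.'' But the Schur multiplier is \emph{not} an isoclinism invariant (already $C_4$ and $C_2\times C_2$ are isoclinic with different multipliers), so the ambient group $H^2(G,\bQ/\bZ)$ in which $B_0(G)$ sits is not controlled by $(G/Z(G),[G,G])$, and the intersection of kernels cannot be transported between isoclinic groups by a formal argument. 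The invariance of $B_0$ under isoclinism is a genuine theorem (Moravec \cite[Theorem 1.2]{Mor14}, cf.\ Theorem \ref{thBB}), which postdates \cite{HKK13}; if you want to use it you must either cite it or prove it, and otherwise you must treat all $2p+61+\gcd\{4,p-1\}+2\gcd\{3,p-1\}$ groups rather than ten representatives.

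Second, and more seriously, your treatment of uniformity in $p$ is not a proof. Asserting that ``the structure constants of $G\wedge G$ and of $M_0(G)$ are polynomial in $p$'' and that ``the $p=3$ and $p=5$ cases already pin down the general pattern'' is an expectation, not an argument: verifying two primes establishes nothing about all odd primes unless you actually carry out the symbolic computation with $p$ as a parameter and prove the claimed stabilization, which is precisely the hard content you are deferring. The published proof avoids this by working cohomologically and uniformly in $p$ with explicit bicyclic subgroups and explicit cocycles for James's parametrized presentations. Finally, your last inference --- that $B_0(G)\le H_2(G,\bZ)$ together with ``smallness'' of the Schur multiplier forces $B_0(G)\simeq C_p$ --- is unsupported: Schur multipliers of groups of order $p^5$ need not be cyclic, and the identification $B_0(G)\simeq C_p$ for $\Phi_{10}$ required a separate argument (it is attributed to \cite[Remark, page 424]{Kan14}, not to the original paper).
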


For the last statement, see \cite[Remark, page 424]{Kan14}.
The proof of Theorem \ref{thKKu13} was given by purely algebraic way. 
There exist exactly $3$ groups which belong to $\Phi_{10}$ if $p=3$, i.e. 
$G=G(243,i)$ with $28\leq i\leq 30$. 
This agrees with Moravec's computational result (Theorem \ref{thMo}).
For $p\ge5$, the exist exactly 
$1+\gcd\{4,p-1\}+\gcd \{3,p-1\}$ groups which belong to $\Phi_{10}$ 
(\cite[page 621]{Jam80}). 

The following result for the $k$-rationality of $k(G)$ 
supplements Theorem \ref{thMo} although it is unknown whether $k(G)$ 
is $k$-rational for groups $G$ which belong to $\Phi_7$: 

\begin{theorem}[{Chu, Hoshi, Hu and Kang \cite[Theorem 1.13]{CHHK}}]
Let $G$ be a group of order $243$ with exponent $e$. 
If $B_0(G)=0$ and $k$ be a field containing a primitive $e$-th root of unity, 
then $k(G)$ is $k$-rational 
except possibly for the five groups $G$ which belong to $\Phi_7$, 
i.e. $G=G(243,i)$ with $56 \le i \le 60$.
\end{theorem}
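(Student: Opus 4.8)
The plan is to reduce the $\bC$-rationality of $k(G)$ for the groups $G$ of order $243$ with $B_0(G)=0$ to a finite checklist of explicit presentations, following the strategy that already succeeded for orders $\leq 32$ (Theorem \ref{thCHKP08}) and for order $64$. First I would collect, from the GAP database \cite{GAP} together with James's classification \cite{Jam80} into the ten isoclinism families $\Phi_1,\ldots,\Phi_{10}$, an explicit list of all $67$ groups $G=G(243,i)$, and split off the groups in $\Phi_{10}$ (which have $B_0(G)\neq 0$ by Theorem \ref{thKKu13} and Theorem \ref{thMo}) and the five groups $G=G(243,i)$, $56\le i\le 60$, in $\Phi_7$, which are to be explicitly excluded in the statement. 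What remains is to show $k$-rationality for every group in $\Phi_1,\ldots,\Phi_6,\Phi_8,\Phi_9$ under the hypothesis that $k$ contains a primitive $e$-th root of unity, where $e=\exp(G)\in\{3,9,27\}$.

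The main technical engine will be the standard linearization and descent toolkit for Noether's problem over $p$-groups: writing $\bC(G)$ (or $k(G)$) as $k(V)^G$ for a faithful $G$-module $V$, one repeatedly applies (i) the ``no-name'' lemma to strip off induced pieces of $V$, reducing to a faithful action on a small-dimensional space, (ii) Kang-type results on monomial actions and on fixed fields of abelian groups (Theorem \ref{thFis}), and (iii) for the genuinely non-abelian steps, the results of Chu-Kang \cite{CK01}, Chu-Hu-Kang-Prokhorov \cite{CHKP08} and their successors on metacyclic and small-order $p$-groups, since many groups of order $243$ are extensions of groups already handled in orders $\leq 81$. I would organize the argument family by family: for each $\Phi_j$ in the allowed list, pick a representative, exhibit a faithful $G$-lattice or multiplicative action with a nice normal abelian subgroup $A\lhd G$ with $G/A$ cyclic, compute $k(V)^A$ explicitly as a rational field using Fischer's theorem with the root-of-unity hypothesis, and then realize the residual $G/A$-action as a monomial (or quasi-monomial) action that is known to have rational fixed field.

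The hard part will be the families where no convenient normal abelian subgroup with cyclic quotient exists, or where the residual monomial action is of a type not covered by existing rationality criteria — in other words, pinpointing exactly which of the $67$ groups genuinely require a new computation and which follow formally from \cite{CK01}, \cite{CHKP08}, \cite{HK10}, \cite{Kan11}, \cite{KMZ12}. For those recalcitrant groups I would fall back on finding an explicit set of generators for $k(V)^G$ by hand (conjugation of linear coordinates, symmetrization over a cyclic quotient, and the ``flabby resolution''/Kang descent for multiplicative actions) and then verifying algebraic independence; I expect $\Phi_7$ to be precisely the obstruction that cannot be removed, which is why it is stated as an exception, and I would remark that the five groups in $\Phi_7$ have $B_0=0$ yet resist all the above reductions, leaving their rationality open. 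Finally I would double-check that the root-of-unity hypothesis on $k$ is used only where Fischer's theorem or a Kang-type monomial result demands it, so that the conclusion is stated in the sharp form ``$k(G)$ is $k$-rational.''
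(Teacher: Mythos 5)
First, a point of orientation: the paper you are reading does not prove this statement at all --- it is quoted verbatim from Chu--Hoshi--Hu--Kang \cite{CHHK} and used as a known input. So there is no in-paper proof to compare against; the nearest analogue of the argument you sketch is the paper's own proof of Theorem \ref{thmain2} for order $128$, which does proceed exactly as you describe: pick a representative of each isoclinism family, take a faithful (decomposable) representation $V$, use Theorem \ref{thHK} to reduce $k(G)$ to $k(V)^G$, strip off one-dimensional pieces with Theorem \ref{thAHK}, and then perform explicit changes of variables on the residual monomial action. Your high-level plan (list the $67$ groups of order $243$, remove $\Phi_{10}$ via Theorem \ref{thKKu13}, set aside $\Phi_7$, and treat the remaining families by linearization plus Fischer plus monomial-action results) is therefore the correct and standard strategy, and it is surely the skeleton of the actual proof in \cite{CHHK}.

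The genuine gap is that your proposal is a methodology, not a proof. For a statement of this kind the entire mathematical content lives in the case-by-case constructions: for each family one must actually exhibit the faithful module, the intermediate multiplicative/monomial action, and the explicit invariants, and verify that the resulting action falls under a known rationality criterion --- none of which is done or even instantiated for a single group in your write-up. ``I would fall back on finding an explicit set of generators by hand'' is precisely the step that can fail (indeed it \emph{does} fail for $\Phi_7$, which is why the exception is there), so deferring it leaves the theorem unproved. Two smaller inaccuracies: the exponent of a non-abelian group of order $3^5$ can be as large as $3^4=81$ (and $3^5$ for the cyclic group), so your claim $e\in\{3,9,27\}$ is wrong and matters for where the root-of-unity hypothesis is invoked; and the reduction is not always to a normal abelian subgroup with \emph{cyclic} quotient --- in the order-$128$ cases above the residual group after killing the center is typically a non-cyclic $2$-group acting monomially, and one needs the specific lemmas (e.g.\ Lemma \ref{lemHKY} and its degree-$3$ analogues) for those actions, not just Fischer's theorem.
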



In \cite{HKK13} and \cite{CHHK}, 
not only the evaluation of the Bogomolov multiplier 
$B_0(G)$ and the $k$-rationality of $k(G)$ 
but also the $k$-isomorphisms between 
$k(G_1)$ and $k(G_2)$ for some groups $G_1$ and $G_2$ 
belonging to the same isoclinism family were given. 

Bogomolov and B\"ohning \cite{BB13} 
gave an answer to the question raised as 
\cite[Question 1.11]{HKK13} in the affirmative as follows.  

\begin{theorem}[{Bogomolov and B\"ohning \cite[Theorem 6]{BB13}}]\label{thBB}
If $G_1$ and $G_2$ are isoclinic, 
then $\bC(G_1)$ and $\bC(G_2)$ are stably $\bC$-isomorphic. 
In particular, $H_{\rm nr}^i(\bC(G_1),\mu_n^{\otimes j})$ 
$\overset{\sim}{\longrightarrow}$ $H_{\rm nr}^i(\bC(G_2),\mu_n^{\otimes j})$. 
\end{theorem}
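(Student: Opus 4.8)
The plan is to reduce everything to the no-name lemma together with a ``deflation'' lemma for central subgroups, and then to realise $G_1$ and $G_2$ as quotients of one common central cover. First recall that a faithful linear representation $V$ of a finite group $G$ over $\bC$ is automatically generically free, since its non-free locus is the finite union $\bigcup_{1\neq g\in G}V^g$ of proper linear subspaces. Consequently, for any two faithful $\bC$-representations $V,W$ of $G$, the trivial $G$-equivariant vector bundle $V\oplus W\to V$ descends over a dense open of $V/G$ to a Zariski-locally trivial vector bundle, so the no-name lemma gives that $\bC(V\oplus W)^G$ is rational over $\bC(V)^G$, and by symmetry also over $\bC(W)^G$; hence $\bC(V)^G$ and $\bC(W)^G$ are stably $\bC$-isomorphic. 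Taking $W=V_{\rm reg}$ (faithful, with $\bC(V_{\rm reg})^G=\bC(G)$) shows that $\bC(G)$ is stably $\bC$-isomorphic to $\bC(V)^G$ for \emph{every} faithful $V$, so we are free to compute $\bC(G)$, up to stable isomorphism, through any convenient faithful representation.

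\textbf{Deflation lemma.} I claim that if $N\leq Z(G)$ satisfies $N\cap[G,G]=1$, then $\bC(G)$ and $\bC(G/N)$ are stably $\bC$-isomorphic. Put $Q=G/N$, choose a faithful representation $W$ of $Q$ and pull it back to $G$ (kernel exactly $N$). Since $N\cap[G,G]=1$, the group $N$ embeds into $G^{\rm ab}$, so every character of $N$ extends to a character $G\to\bC^\times$; pick $\tilde\psi_1,\dots,\tilde\psi_r\colon G\to\bC^\times$ whose restrictions generate $\widehat N$, so that $\rho=\bigoplus_i\tilde\psi_i$ has $\rho|_N$ faithful and $W\oplus\rho$ is a faithful $G$-representation. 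By the reduction above, $\bC(G)$ is stably $\bC$-isomorphic to $\bC(W\oplus\rho)^G=\bigl(\bC(W)(y_1,\dots,y_r)^N\bigr)^{Q}$, where $N$ acts trivially on $\bC(W)$ and diagonally (and faithfully) on $y_1,\dots,y_r$ through the $\tilde\psi_i$. The $N$-invariant subfield is $\bC(W)(z_1,\dots,z_r)$, rational over $\bC(W)$ of the same transcendence degree, generated by monomials $z_j=\prod_i y_i^{b_{ij}}$ running over a $\bZ$-basis of the finite-index lattice $\{\mathbf b\in\bZ^r:\prod_i\psi_i^{b_i}=1\}$; and on each such monomial $G$ acts by a character that is trivial on $N$, hence by a character $\eta_j\colon Q\to\bC^\times$. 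Therefore $\bC(W\oplus\rho)^G=\bC\bigl(W\oplus\bigoplus_j\eta_j\bigr)^{Q}$, the invariant field of the faithful linear $Q$-representation $W\oplus\bigoplus_j\eta_j$, which by the reduction applied to $Q$ is stably $\bC$-isomorphic to $\bC(Q)=\bC(G/N)$.

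\textbf{Common central cover and conclusion.} Given the isoclinism data $\theta\colon G_1/Z(G_1)\xrightarrow{\ \sim\ }G_2/Z(G_2)=:Q$ and $\phi\colon[G_1,G_1]\xrightarrow{\ \sim\ }[G_2,G_2]$, form the fibre product $H=G_1\times_Q G_2=\{(g_1,g_2): \theta(g_1Z(G_1))=g_2Z(G_2)\}$. The projections $\pi_i\colon H\to G_i$ are surjective with kernels $N_1=1\times Z(G_2)$ and $N_2=Z(G_1)\times 1$, both central in $H$. The commutator-compatibility built into the definition of isoclinism forces $[H,H]=\{(c,\phi(c)):c\in[G_1,G_1]\}$, a ``diagonal'' subgroup, so $N_1\cap[H,H]=N_2\cap[H,H]=1$. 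Applying the deflation lemma twice gives that $\bC(G_1)=\bC(H/N_1)$, $\bC(H)$ and $\bC(H/N_2)=\bC(G_2)$ are all stably $\bC$-isomorphic. The final assertion is then immediate from the fact (Colliot-Th\'el\`ene and Ojanguren, recalled above) that stably $\bC$-isomorphic function fields have isomorphic unramified cohomology groups $H_{\rm nr}^i(-,\mu_n^{\otimes j})$ in every degree.

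The main obstacle is the deflation lemma: one has to follow the induced $G/N$-action through the computation of the multiplicative ($N$-)invariants and check that it is again a \emph{linear} action on a faithful $G/N$-representation, so that the no-name reduction can be invoked a second time. A secondary point, where the isoclinism hypothesis is genuinely used, is verifying in the last step that $[H,H]$ is the diagonal subgroup $\{(c,\phi(c))\}$ — this needs the isomorphism $\phi$ matching commutators, not merely the isomorphism $\theta$ of central quotients.
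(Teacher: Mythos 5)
The paper does not prove this statement; it is quoted directly from Bogomolov--B\"ohning \cite[Theorem 6]{BB13}, so there is no internal proof to compare against. Your argument is correct and complete: the no-name reduction to arbitrary faithful representations, the deflation lemma for a central $N$ with $N\cap[G,G]=1$ (including the monomial computation of the $N$-invariants and the check that the residual $G/N$-action is again linear and faithful), and the fibre product $G_1\times_Q G_2$ whose commutator subgroup is the graph of $\phi$ are all verified correctly, and this is essentially the strategy of \cite{BB13} itself.
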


A partial result of Theorem \ref{thBB} was already given by Moravec. 
Indeed, 
Moravec \cite[Theorem 1.2]{Mor14} proved that 
if $G_1$ and $G_2$ are isoclinic, then $B_0(G_1)\simeq B_0(G_2)$.\\

{\it The case where $G$ is a group of order $64$.} 

The classification of the groups $G$ of order $p^6$ with $B_0(G)\neq 0$ 
for $p=2$ was obtained by Chu, Hu, Kang and Kunyavskii 
\cite{CHKK10}.
Moreover, they investigated Noether's problem for groups $G$ 
with $B_0(G)=0$. 
There exist $267$ groups $G$ of order $64$ which are classified into $27$ 
isoclinism families $\Phi_1,\ldots,\Phi_{27}$ 
by Hall and Senior \cite{HS64} (see also \cite[Table I]{JNO90}). 
The main result of \cite{CHKK10} 
can be stated in terms of the isoclinism families as follows.

\begin{theorem}[{Chu, Hu, Kang and Kunyavskii \cite{CHKK10}}]\label{thCHKK10}
Let $G=G(2^6,i)$, $1\leq i\leq 267$, be the $i$-th group of order $64$ 
in the GAP database {\rm \cite{GAP}}.\\
{\rm (1) (\cite[Theorem 1.8]{CHKK10})} 
$B_0(G)\ne 0$ if and only if $G$ belongs to the isoclinism family 
$\Phi_{16}$, i.e. $G=G(2^6,i)$ with $149\le i\le 151$,
$170\le i\le 172$, $177\le i\le 178$ or $i=182$. 
Moreover, if $B_0(G)\neq 0$, then $B_0(G)\simeq C_2$ 
$($see \cite[Remark, page 424]{Kan14} for this statement$)$;\\
{\rm (2) (\cite[Theorem 1.10]{CHKK10})} 
If $B_0(G)= 0$ and $k$ is an quadratically closed field, 
then $k(G)$ is $k$-rational except
possibly for five groups which belong to $\Phi_{13}$, i.e. 
$G=G(2^6,i)$ with $241\le i\le 245$.
\end{theorem}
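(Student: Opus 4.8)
The plan is to handle the two parts of the statement separately, each by a finite reduction over the $27$ isoclinism families of groups of order $2^6$ in the Hall--Senior classification.

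For part (1) I would work from the Bogomolov--Saltman formula of Theorem~\ref{thBog}: $B_0(G)$ is the intersection, over all bicyclic (i.e.\ rank $\le 2$ abelian) subgroups $A\le G$, of the kernels $\ker\bigl({\rm res}\colon H^2(G,\bQ/\bZ)\to H^2(A,\bQ/\bZ)\bigr)$, where $H^2(G,\bQ/\bZ)$ is the Schur multiplier $H_2(G,\bZ)$. Since $B_0$ is an isoclinism invariant (Moravec), it suffices to examine one representative per family. First I would clear away the families in which $B_0$ vanishes for a structural reason: (a) the many metacyclic groups, and more generally every $G$ with cyclic commutator subgroup, for which $B_0(G)=0$ by a theorem of Bogomolov; (b) every $G$ with $H_2(G,\bZ)=0$, where there is nothing to intersect; (c) families covered by the $|G|\le 2^4$ vanishing (Theorem~\ref{thCK01}) or decomposing as a direct or central product in a way compatible with $B_0$. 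A short direct computation of $H_2(G,\bZ)$ together with the restriction maps to the rank-$2$ abelian subgroups then disposes of the remaining families, showing the intersection is $0$ for every family except $\Phi_{16}$. For $\Phi_{16}$, starting from the presentation of a representative I would compute $H_2(G,\bZ)$ explicitly, exhibit one class $\alpha$ of order $2$ lying in every $\ker({\rm res}_A)$, and check that every other class is already detected on some bicyclic $A$; this gives $B_0(G)=\langle\alpha\rangle\simeq C_2$, and by isoclinism invariance the same holds for all nine groups of $\Phi_{16}$, which one then matches with the GAP indices $149$--$151$, $170$--$172$, $177$--$178$, $182$.

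For part (2), assume $k$ is quadratically closed and let $G$ range over the families with $B_0(G)=0$. The plan is the usual reduction of Noether's problem: since a $2$-group $G$ has a faithful representation, the field $k(G)=k(x_g:g\in G)^G$ is stably $k$-isomorphic to $k(W)^G$ for any faithful $k[G]$-module $W$ by the no-name lemma, so one may work with an economical monomial module adapted to the subgroup structure of $G$. Going family by family, I would take the presentation of each $G$ from the Hall--Senior/GAP data, write a monomial $G$-action on some $k(y_1,\dots,y_m)$, and reduce the number of variables one step at a time: taking invariants under a central $C_2$, or under a normal abelian subgroup, either yields the function field of a group of order $32$, where Theorem~\ref{thCHKP08} applies, or leaves a lower-dimensional monomial action that one recognises as rational through the standard toolkit (Ahmad--Hajja--Kang on twisted multiplicative actions of small rank, Chu--Kang, Hoshi--Kang, and the rationality of algebraic tori of dimension $\le 3$ over $k$). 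Quadratic closedness of $k$ is what makes these steps go through, since the square-class obstructions $k(\sqrt{a})/k$ that would otherwise block a reduction collapse.

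I expect the main obstacles to be two. In part (1) the delicate point is not the vanishing families but showing that $B_0(G)$ for $\Phi_{16}$ is \emph{exactly} $C_2$: this needs the full Schur multiplier $H_2(G,\bZ)$ of a $\Phi_{16}$-representative computed correctly, together with all restriction maps to its rank-$2$ abelian subgroups, since overcounting $H_2$ or overlooking a bicyclic subgroup would change the answer. In part (2) the genuine obstruction is the family $\Phi_{13}$: there the variable-by-variable reduction stalls, leaving a monomial action that resists the available rationality criteria, which is precisely why the five groups $G(2^6,i)$, $241\le i\le 245$, must be excluded from the rationality conclusion even though their Bogomolov multiplier vanishes. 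Everything else should be a long but essentially mechanical verification once the right normal filtration is chosen for each family.
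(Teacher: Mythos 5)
The paper does not prove this theorem at all: it is imported verbatim from Chu--Hu--Kang--Kunyavskii \cite{CHKK10} (Theorems 1.8 and 1.10 there), with the refinement $B_0(G)\simeq C_2$ credited to \cite[Remark, page 424]{Kan14}. So there is no in-paper argument to compare yours against; the only question is whether your outline would actually establish the result. As a reconstruction of the strategy of the cited works it is faithful: part (1) of the original proof does proceed via Bogomolov's formula (Theorem \ref{thBog}), elimination of groups with cyclic commutator subgroup or trivial Schur multiplier, and an explicit computation of $H_2(G,\bZ)$ together with the restrictions to bicyclic subgroups for the remaining cases; part (2) does proceed by choosing a faithful representation, invoking the no-name reduction (in this paper's notation, Theorem \ref{thHK}), and peeling off variables via Theorem \ref{thAHK} and the low-dimensional monomial/multiplicative rationality toolkit, with quadratic closedness of $k$ used exactly as you say.

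The genuine gap is that your proposal is a plan, not a proof: the entire content of this theorem is the finite but large verification (267 groups, or 27 isoclinism families after invoking Moravec's invariance --- which, note, postdates \cite{CHKK10} and was not available to the original authors, who worked group by group), and none of it is carried out. In particular, for part (1) you would need to actually produce the Schur multiplier and the full list of bicyclic subgroups for a $\Phi_{16}$ representative and exhibit the surviving class $\alpha$; asserting that ``a short direct computation \ldots disposes of the remaining families'' is precisely the step where all the work lives, and where an overlooked bicyclic subgroup or a miscomputed $H_2$ would silently change the answer. Likewise in part (2) the claim that the reduction ``either yields the function field of a group of order $32$ \ldots or leaves a lower-dimensional monomial action that one recognises as rational'' is not something one can take on faith --- the existence of the exceptional family $\Phi_{13}$, which you correctly flag, shows that the reduction does not always terminate favourably, so each family genuinely has to be checked. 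Your outline is the right skeleton, but as written it proves nothing that is not already contained in the citation.
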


For groups $G$ which belong to $\Phi_{13}$, 
$k$-rationality of $k(G)$ is unknown. 
The following two propositions supplement the cases $\Phi_{13}$ 
and $\Phi_{16}$ of Theorem \ref{thCHKK10}. 

\begin{definition}
\label{defL01}
Let $k$ be a field with {\rm char} $k\neq 2$ and 
$k(X_1,X_2,X_3,X_4,X_5,X_6)$ be the rational function field over $k$ 
with variables $X_1,X_2,X_3,X_4,X_5,X_6$.\\
{\rm (i)} {\it The field $L_k^{(0)}$} is defined to be $k(X_1,X_2,X_3,X_4,X_5,X_6)^H$ 
where $H=\langle \sigma_1, \sigma_2\rangle\simeq C_2\times C_2$ act on 
$k(X_1,X_2,X_3,X_4,X_5,X_6)$ by $k$-automorphisms  
\begin{align*}
\sigma_1 &: X_1\mapsto X_3,\ X_2\mapsto \frac{1}{X_1X_2X_3},\ X_3\mapsto X_1,\ 
X_4\mapsto X_6,\ X_5\mapsto \frac{1}{X_4X_5X_6},\ X_6\mapsto X_4,\\
\sigma_2 &: X_1\mapsto X_2,\ X_2\mapsto X_1,\ X_3\mapsto \frac{1}{X_1X_2X_3},\ 
X_4\mapsto X_5,\ X_5\mapsto X_4,\ X_6\mapsto \frac{1}{X_4X_5X_6}. 
\end{align*}
{\rm (ii)} {\it The field $L_k^{(1)}$} 
is defined to be $k(X_1,X_2,X_3,X_4)^{\langle\tau\rangle}$ 
where $\langle\tau\rangle \simeq C_2$ acts on 
$k(X_1,X_2,X_3,X_4)$ by $k$-automorphisms 
\begin{align*}
\tau: X_1\mapsto -X_1,\ 
X_2\mapsto \frac{X_4}{X_2},\ 
X_3\mapsto \frac{(X_4-1)(X_4-X_1^2)}{X_3},\ 
X_4\mapsto X_4.
\end{align*}
\end{definition}

\begin{proposition}[{\cite[Proposition 6.3]{CHKK10}, see also \cite[Proposition 12.5]{HY}}]\label{prop1}
Let $G$ be a group of order $64$ which belongs to $\Phi_{13}$, i.e. 
$G=G(2^6,i)$ with $241\leq i\leq 245$. 
There exists a $\bC$-injective homomorphism $\varphi : L_\bC^{(0)}\rightarrow \bC(G)$ 
such that $\bC(G)$ is rational over $\rho(L_\bC^{(0)})$. 
In particular, $\bC(G)$ and $L_\bC^{(0)}$ are stably $\bC$-isomorphic 
and $B_0(G)\simeq {\rm Br}_{\rm nr}(L_\bC^{(0)})=0$. 
\end{proposition}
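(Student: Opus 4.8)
The plan is to exhibit $\bC(G)$ as a rational extension of (a copy of) the field $L_\bC^{(0)}$ by choosing good generators for $\bC(G)$ adapted to a normal series of $G$. Concretely, for each of the five groups $G = G(2^6,i)$, $241 \le i \le 245$, I would first pick a presentation of $G$ as an extension $1 \to N \to G \to \overline{G} \to 1$ where $N$ is a well-chosen abelian normal subgroup (a natural choice is $N = [G,G]$ together with enough of the center so that $\overline{G} \simeq C_2 \times C_2$ or another small abelian quotient). Then I would apply the standard linearization machinery: since $N$ is abelian and $\bC$ contains enough roots of unity, the faithful $G$-action on $\bC(x_g : g \in G)$ restricts on the $N$-part to a (twisted) multiplicative action, and by the no-name lemma / Fischer's theorem applied fiberwise, $\bC(G) = \bC(V_{\mathrm{reg}}/G)$ is rational over the fixed field of the induced $\overline{G}$-action on a smaller multiplicative lattice, i.e. over a field of the form $\bC(t_1,\dots,t_r)^{\overline{G}}$ with $\overline{G}$ acting by monomial $\bC$-automorphisms.

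The key step is then to identify that monomial $\overline{G}$-action with the specific $C_2 \times C_2$-action defining $L_\bC^{(0)}$ in Definition \ref{defL01}, after possibly adjoining further independent variables on which $\overline{G}$ acts "trivially enough" (this is what produces the rationality of $\bC(G)$ over $\rho(L_\bC^{(0)})$ rather than an outright isomorphism). In practice I would: (1) compute the character lattice of the $N$-action and decompose it into $\overline{G}$-orbits; (2) on each orbit of length $4$ (resp.\ shorter orbits), introduce coordinates and use the relation-twisting trick from \cite{CHKK10} — passing from the obvious linear coordinates to ratios like $X_2 \mapsto 1/(X_1X_2X_3)$ — to absorb the $2$-cocycle defining the extension, exactly as in the definition of $\sigma_1,\sigma_2$; (3) check that the leftover variables split off rationally by the no-name lemma, giving a purely transcendental tower on top of $L_\bC^{(0)}$. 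Since all five groups in $\Phi_{13}$ are isoclinic, by Theorem \ref{thBB} (or already by the pre-$\Phi$-invariance arguments in \cite{CHKK10}) it suffices to carry this out in detail for one representative and then transfer; alternatively one verifies the five computations are formally identical because they differ only by a $2$-cocycle that dies after stabilization.

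The last assertions are then immediate formal consequences: a $\bC$-injection $\varphi\colon L_\bC^{(0)} \to \bC(G)$ with $\bC(G)$ rational over $\varphi(L_\bC^{(0)})$ shows $\bC(G)$ and $L_\bC^{(0)}$ are stably $\bC$-isomorphic, hence by Proposition (Colliot-Th\'el\`ene and Ojanguren, \cite[Proposition 1.2]{CTO89}) and Theorem \ref{thBog} we get $B_0(G) \simeq {\rm Br}_{\rm nr}(\bC(G)) \simeq {\rm Br}_{\rm nr}(L_\bC^{(0)})$; and ${\rm Br}_{\rm nr}(L_\bC^{(0)}) = 0$ is known from \cite{CHKK10} (it is computed directly there, or follows because $\Phi_{13}$ has vanishing Bogomolov multiplier by Theorem \ref{thCHKK10}(1), which lists only $\Phi_{16}$). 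The main obstacle I anticipate is the bookkeeping in step (2): getting the cocycle-absorbing change of variables to land \emph{exactly} on the prescribed automorphisms $\sigma_1,\sigma_2$ — the orbit structure forces a $6$-dimensional lattice split as two $3$-dimensional blocks with the "sum-to-zero" relation $X_1X_2X_3 \cdot (\text{orbit element}) = 1$, and matching signs and which generator acts as the "product-inverse" on which coordinate requires care, though it is ultimately a finite and mechanical check once the normal series is fixed.
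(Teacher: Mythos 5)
The paper does not actually prove this proposition: it is imported verbatim from \cite[Proposition 6.3]{CHKK10} (see also \cite[Proposition 12.5]{HY}), so there is no internal proof to compare line by line. That said, your strategy is recognizably the same one the paper carries out in full for all eleven order-$128$ cases of Theorem \ref{thmain2}, and it is the right one; the differences are in which tools are invoked. Where you propose to work with the regular representation and apply ``Fischer's theorem fiberwise'' to the abelian normal subgroup, the paper's template (and that of \cite{CHKK10}) is sharper: first replace $\bC(x_g:g\in G)$ by $\bC(V)^G$ for a low-dimensional faithful representation $V$ using Theorem \ref{thHK}, then pass to ratios of coordinates and strip off the variables on which $G$ acts by affine (in fact scalar-times-translation) transformations using Theorem \ref{thAHK}, and only then compute multiplicative invariants of the diagonal subgroup to obtain the monomial $C_2\times C_2$-action of Definition \ref{defL01}(i); your ``no-name lemma / Fischer fiberwise'' step is doing the work of Theorems \ref{thHK} and \ref{thAHK} but is stated too loosely to be quoted as is. Two further caveats. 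First, transferring from one representative to the other four groups of $\Phi_{13}$ via Theorem \ref{thBB} only yields stable $\bC$-isomorphism, not the stated rationality of $\bC(G)$ over $\varphi(L_\bC^{(0)})$ for each $G$; the references (and the paper's own treatment of Proposition \ref{prop2}) instead redo the explicit computation for each group, asserting the other cases are ``similar.'' Second, the vanishing ${\rm Br}_{\rm nr}(L_\bC^{(0)})=0$ is not known independently and then matched to $B_0(G)$: the logic runs the other way, namely $B_0(G)=0$ by Theorem \ref{thCHKK10}(1) and the stable isomorphism then forces ${\rm Br}_{\rm nr}(L_\bC^{(0)})=0$ --- which is precisely why the $\bC$-rationality of $L_k^{(0)}$ itself remains an open question in the paper. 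With those adjustments your outline matches the intended proof; the remaining content is exactly the ``finite and mechanical check'' you defer, which in the sources occupies the explicit changes of variables.
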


\begin{proposition}[{\cite[Example 5.11, page 2355]{CHKK10}, \cite[Proof of Theorem 6.3]{HKK14}}]\label{prop2}
Let $G$ be a group of order $64$ which belongs to $\Phi_{16}$, 
i.e. $G=G(2^6,i)$ with $149\le i\le 151$,
$170\le i\le 172$, $177\le i\le 178$ or $i=182$. 
There exists a $\bC$-injective homomorphism 
$\varphi : L_\bC^{(1)}\rightarrow \bC(G)$ such that $\bC(G)$ is rational 
over $\varphi(L_\bC^{(1)})$. 
In particular, $\bC(G)$ and $L_\bC^{(1)}$ are stably $\bC$-isomorphic, 
$B_0(G)\simeq {\rm Br}_{\rm nr}(L_\bC^{(1)})\simeq C_2$ and hence $\bC(G)$ and $L_\bC^{(1)}$ 
are not $($retract, stably$)$ $\bC$-rational. 
\end{proposition}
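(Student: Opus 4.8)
The plan is to follow the strategy of \cite{CHKK10} for groups of order $64$: starting from $\bC(G)=\bC(x_g:g\in G)^G$ with its $2^6$ variables, one successively reduces the number of variables by standard rationality reductions until the residual action is precisely the involution $\tau$ of Definition \ref{defL01}(ii) acting on four variables, all the while keeping track of the intermediate rationality so that the composite reduction exhibits $\bC(G)$ as a rational extension of a copy of $L_\bC^{(1)}$.

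First I would pass from the regular representation to a small faithful one. Fix a faithful $\bC[G]$-module $W$ of minimal dimension; writing the regular representation as $W\oplus W'$ and applying the so-called no-name lemma (which allows one to discard the complementary subrepresentation $W'$ up to rationality), $\bC(G)=\bC(W\oplus W')^G$ is rational over $\bC(W)^G$, so it suffices to analyze $\bC(W)^G$. Since all nine groups in $\Phi_{16}$ share the same commutator and central-quotient structure, this analysis can be carried out uniformly (as in \cite[Example 5.11]{CHKK10}); alternatively, Theorem \ref{thBB} already guarantees that the resulting stable $\bC$-isomorphism class is the same for all nine, so it would be enough to treat one representative in detail and invoke isoclinism for the rest.

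Next I would compute $\bC(W)^G$ in stages along a normal series $1\triangleleft A\triangleleft\cdots\triangleleft G$ with $A$ abelian and cyclic successive quotients. As $A$ is abelian and $\bC$ contains enough roots of unity, $A$ acts diagonally in a suitable basis, so $\bC(W)^A$ is rational with monomial generators and $G/A$ acts on it by a monomial (twisted multiplicative) action; iterating this, and performing the changes of variables dictated by the relations of $G$, one arrives at a rational extension of the fixed field of a single involution on four variables. The crux of the calculation is that, after the correct normalization, this involution is exactly $\tau$, so $\bC(W)^G$ is rational over a copy of $L_\bC^{(1)}$; composing with the first step produces the required $\bC$-injection $\varphi:L_\bC^{(1)}\to\bC(G)$ over which $\bC(G)$ is rational, and hence $\bC(G)$ and $L_\bC^{(1)}$ are stably $\bC$-isomorphic.

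For the remaining assertions, Theorem \ref{thCHKK10}(1) gives $B_0(G)\simeq C_2$ for every $G\in\Phi_{16}$, and by Theorem \ref{thBog} this is ${\rm Br}_{\rm nr}(\bC(G))$. Since the unramified Brauer group is invariant under stable $\bC$-isomorphism (\cite{CTO89}), the stable isomorphism just established yields ${\rm Br}_{\rm nr}(L_\bC^{(1)})\simeq{\rm Br}_{\rm nr}(\bC(G))\simeq C_2$; and a nonzero unramified Brauer group obstructs retract $\bC$-rationality by Proposition \ref{propSal}, so neither $\bC(G)$ nor $L_\bC^{(1)}$ is retract $\bC$-rational, and a fortiori neither is stably $\bC$-rational. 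The main obstacle is the bookkeeping in the third step: selecting $W$ and ordering the monomial reductions so that the leftover four-variable involution coincides with $\tau$ on the nose rather than up to an inconvenient birational conjugation. This is the delicate part of \cite{CHKK10} and of \cite[Proof of Theorem 6.3]{HKK14}, and it is what pins down the exact coefficients $(X_4-1)(X_4-X_1^2)$ appearing in Definition \ref{defL01}(ii); the remaining ingredients — the no-name reductions, the transfer of the Brauer group across a stable isomorphism, and the final rationality obstruction — are then formal.
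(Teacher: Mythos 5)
Your outline matches the paper's approach: the paper's own proof of this proposition is only a citation to \cite[Proof of Theorem 6.3]{HKK14} and \cite[Example 5.11]{CHKK10} with the remark that the other cases are similar, and the method you describe (pass to a faithful subrepresentation via Theorem \ref{thHK}, peel off variables and reduce the resulting monomial action until only the involution $\tau$ of Definition \ref{defL01}(ii) on four variables remains, then transfer the unramified Brauer group across the stable isomorphism) is exactly the one carried out there and in Cases 1--8 of Theorem \ref{thmain2}. One small caution: the isoclinism shortcut via Theorem \ref{thBB} that you offer as an alternative would only yield the stable $\bC$-isomorphism for the remaining eight groups, not the stronger assertion that $\bC(G)$ is rational over an embedded copy of $L^{(1)}_\bC$, so for the statement as written you must repeat the explicit computation for each group, as the paper indicates.
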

\begin{proof}
The case of $G=G(2^6,149)$ is given in \cite[Proof of Theorem 6.3]{HKK14}, 
see also \cite[Example 5.11, page 2355]{CHKK10}.
The proof for other cases is similar. 
\end{proof}

\begin{question}[{\cite[Section 6]{CHKK10}, see also \cite[Section 12]{HY}}]
Is $L_k^{(0)}$ $k$-rational?
\end{question}

\smallskip
{\it The case where $G$ is a group of order $128$.} 

There exist $2328$ groups 
of order $128$ which are classified into $115$ 
isoclinism families $\Phi_1,\ldots,\Phi_{115}$ (\cite[Tables I, II, III]{JNO90}).
By using Moravec's algorithm {\bf b0g.g} \cite{Mor12} of GAP \cite{GAP}, 
e.g. ``\verb+for i in [1..2328] do Print([i,B0G(SmallGroup(128,i))],"\n");od;+'', 
we obtain the following theorem. 

\begin{theorem}[{Moravec \cite[Section 8, Table 1]{Mor12}}]\label{thMo128}
Let $G$ be a group of order $128$. Then 
$B_0(G)\neq 0$ if and only if $G$ is one of the following $220$ groups:\\
{\rm (1)} $G(2^7,i)$ with 
$i=227$,$228$,$229$,$301$,$324$,$325$,$326$,$541$,$543$,$568$,$570$,$579$,$581$,$626$,$627$,$629$,$667$,$668$,\\ 
$670$,$675$,$676$,$678$,$691$,$692$,$693$,$695$,$703$,$704$,$705$,$707$,$724$,$725$,$727$,$1783$,$1784$,$1785$,$1786$,$1864$,$1865$,\\ 
$1866$,$1867$,$1880$,$1881$,$1882$,$1893$,$1894$,$1903$,$1904$;\\
{\rm (2)} $G(2^7,i)$ with 
$1345\leq i\leq 1399$;\\
{\rm (3)} $G(2^7,i)$ with 
$242\leq i\leq 247$, $265\leq i\leq 269$, $287\leq i\leq 293$;\\
{\rm (4)} $G(2^7,i)$ with 
$36\leq i\leq 41$;\\
{\rm (5)} $G(2^7,i)$ with 
$1924\leq i\leq 1929$, $1945\leq i\leq 1951$, $1966\leq i\leq 1972$, 
$1983\leq i \leq 1988$;\\
{\rm (6)} $G(2^7,i)$ with 
$417\leq i\leq 436$;\\
{\rm (7)} $G(2^7,i)$ with 
$446\leq i\leq 455$;\\
{\rm (8)} $G(2^7,i)$ with 
$i=950$, $951$, $952$, $975$, $976$, $977$, $982$, $983$, $987$;\\
{\rm (9)} $G(2^7,i)$ with 
$i=144$, $145$;\\
{\rm (10)} $G(2^7,i)$ with 
$i=138$, $139$;\\
{\rm (11)} $G(2^7,i)$ with 
$1544\leq i\leq 1577$.

Moreover, if $G$ is a group in $(1)$--$(10)$ $($resp. $(11))$, 
then $B_0(G)\simeq C_2$ $($resp. $C_2\times C_2)$. 
\end{theorem}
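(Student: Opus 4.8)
The statement is a \emph{computational} one, and the plan is to prove it by an explicit machine computation of the Bogomolov multiplier $B_0(G)$ for every one of the $2328$ groups $G$ of order $128$, using Moravec's group-theoretic reformulation of $B_0$. Recall that if $G\wedge G$ denotes the nonabelian exterior square of $G$ and $\kappa\colon G\wedge G\to G$, $g\wedge h\mapsto [g,h]$, is the commutator map, then $M(G):=\ker\kappa$ is isomorphic to the Schur multiplier $H_2(G,\bZ)$, and Moravec's theorem identifies $B_0(G)$ with the quotient $M(G)/M_0(G)$, where $M_0(G)$ is the subgroup of $M(G)$ generated by all $g\wedge h$ with $[g,h]=1$. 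For a finite group this quotient is effectively computable, and this is precisely what is implemented in Moravec's GAP routine \texttt{b0g.g}. So the first step is to recall this reformulation together with the correctness of its implementation.

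The second step is to reduce the number of groups to which the routine must be applied. Since $B_0$ is an isoclinism invariant (Moravec \cite{Mor14}; cf.\ Theorem \ref{thBB}), it suffices to run \texttt{b0g.g} on a single representative of each of the $115$ isoclinism families $\Phi_1,\dots,\Phi_{115}$ of groups of order $128$, using the dictionary between the James--Newman--O'Brien labelling of these families and the \texttt{SmallGroup(128,$i$)} numbering \cite{JNO90, GAP}. This isolates exactly $11$ families on which $B_0(G)\neq 0$, and for each of them the routine simultaneously returns the isomorphism type of $M(G)/M_0(G)$: it is $C_2$ for ten of the families and $C_2\times C_2$ for the remaining one.

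The third step is bookkeeping. For each of the $11$ families one writes down the \texttt{SmallGroup} identifiers of \emph{all} its members, which yields the eleven pairwise disjoint index sets displayed in $(1)$--$(11)$ (items $(1)$--$(10)$ giving the families with $B_0\simeq C_2$ and item $(11)$ the family with $B_0\simeq C_2\times C_2$); as an independent check one can instead run \texttt{b0g.g} directly over all of \verb+[1..2328]+, as in the displayed GAP loop, and verify that the set of indices with $B_0\neq 0$ coincides with the union of those eleven sets and that the isomorphism types match. One can further corroborate the non-vanishing cases by exhibiting, for one group in each such family, an explicit nonzero class in $H^2(G,\bQ/\bZ)$ restricting trivially to every bicyclic subgroup, but the complete enumeration --- in particular the vanishing $B_0(G)=0$ for the remaining $2108$ groups --- is inherently by computation.

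The main obstacle I anticipate is not a single deep argument but the \emph{reliability and reproducibility} of the computation: one must be sure that the reformulation $B_0(G)\simeq M(G)/M_0(G)$ is invoked correctly, that the exterior-square computation terminates and returns the correct finite abelian group for every group tested, and --- the most error-prone point in practice --- that the correspondence between the isoclinism-family labels $\Phi_j$ of \cite{JNO90} and the \texttt{SmallGroup(128,$i$)} indices is exact, so that the ranges in $(1)$--$(11)$ are right. Carrying out the computation twice, once family by family and once group by group, and comparing the two outputs, is the natural safeguard.
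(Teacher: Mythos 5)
Your proposal matches the paper's own justification: Theorem \ref{thMo128} is obtained precisely by running Moravec's GAP routine \texttt{b0g.g} (based on the identification $B_0(G)\simeq M(G)/M_0(G)$ via the nonabelian exterior square) over the groups of order $128$, with the isoclinism-family bookkeeping coming from \cite{JNO90}; the paper even displays the same loop over \texttt{[1..2328]} that you suggest as a cross-check. Your added safeguard of computing both family-by-family and group-by-group is sensible but does not change the method.
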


By \cite[Tables I, II, III]{JNO90}, we can get the classification of 
$115$ isoclinism families for groups $G$ of order $128$ 
in terms of the GAP database \cite{GAP}. 
We will present the complete classification as Table 2 
in Section \ref{seTable}. 
Using this, we see that the groups 
as in (1)--(11) of Theorem \ref{thMo128} correspond to the isoclinism families 
$\Phi_{16}$, $\Phi_{31}$, $\Phi_{37}$, $\Phi_{39}$, $\Phi_{43}$, 
$\Phi_{58}$, $\Phi_{60}$, $\Phi_{80}$, $\Phi_{106}$, 
$\Phi_{114}$, $\Phi_{30}$ respectively:\\

\begin{table}[h]
\begin{tabular}{|c|cccccccccc|c|c|}\hline
& (1) & (2) & (3) & (4) & (5) & (6) & (7) & (8) & (9) & (10) & (11) & Total\\\hline
Family & $\Phi_{16}$ & $\Phi_{31}$ & $\Phi_{37}$ & $\Phi_{39}$ & $\Phi_{43}$ & 
$\Phi_{58}$ & $\Phi_{60}$ & $\Phi_{80}$ & $\Phi_{106}$ & 
$\Phi_{114}$ & $\Phi_{30}$ & \\\hline
$\exp(G)$ & $8$ & $4$ & $8$ & $4$ or $8$ & $8$ & $8$ & $8$ & $16$ & $8$ & $8$ & $4$ &\\\hline
$B_0(G)$ & & &  &  & $C_2$ & & & & & & $C_2\times C_2$ & \\\hline 
\# $G$'s & 48 & 55 & 18 & 6 & 26 & 20 & 10 & 9 & 2 & 2 & 34 & 220\\\hline
\end{tabular}\\
\vspace*{5mm}
Table $1$: Isoclinism families $\Phi_j$ for groups $G$ of order $128$ with $B_0(G)\neq 0$
\end{table}

\begin{corollary}[{Moravec \cite[Section 8, Table 1]{Mor12}}]
Let $G$ be a group of order $128$. Then 
$B_0(G)\neq 0$ if and only if 
$G$ belongs to the isoclinism family 
$\Phi_{16}$, $\Phi_{30}$, 
$\Phi_{31}$, $\Phi_{37}$, $\Phi_{39}$, $\Phi_{43}$, 
$\Phi_{58}$, $\Phi_{60}$, $\Phi_{80}$, $\Phi_{106}$ or $\Phi_{114}$. 
Moreover, if $B_0(G)\neq 0$, then 
\begin{align*}
B_0(G)\simeq\begin{cases}
C_2&{\rm if}\ G\ {\rm belongs\ to\ } \Phi_{16}, \Phi_{31}, \Phi_{37}, 
\Phi_{39}, \Phi_{43}, \Phi_{58}, \Phi_{60}, \Phi_{80}, \Phi_{106}\ {\rm or}\ \Phi_{114},\\
C_2\times C_2&{\rm if}\ G\ {\rm belongs\ to\ } \Phi_{30}.
\end{cases}
\end{align*}
In particular, $\bC(G)$ is not $($retract, stably$)$ $\bC$-rational.
\end{corollary}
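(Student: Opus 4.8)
The plan is to read the statement off from Moravec's enumeration (Theorem \ref{thMo128}) together with the isoclinism classification of the groups of order $128$, so that the proof is organizational rather than computational. First I would record the structural input that makes this reformulation legitimate: the Bogomolov multiplier $B_0$ is an isoclinism invariant --- this is Moravec \cite[Theorem 1.2]{Mor14}, and also follows from Theorem \ref{thBB} combined with the identification $B_0(G)\simeq{\rm Br}_{\rm nr}(\bC(G)/\bC)$ of Theorem \ref{thBog}. Consequently the set of groups $G$ of order $128$ with $B_0(G)\ne 0$ is automatically a union of isoclinism families, so it suffices to identify, for each of the eleven lists $(1)$--$(11)$ of GAP identifiers in Theorem \ref{thMo128}, which family $\Phi_j$ its members constitute.

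Next I would carry out that identification. The $115$ isoclinism families of groups of order $128$ are tabulated by James, Newman and O'Brien \cite[Tables I, II, III]{JNO90}; translating their labels into the \texttt{SmallGroup} identifiers of the GAP library \cite{GAP} produces the explicit correspondence displayed as Table 2 in Section \ref{seTable}. Matching this against the lists in Theorem \ref{thMo128}, one checks the eleven set equalities: list $(1)=\Phi_{16}$, $(2)=\Phi_{31}$, $(3)=\Phi_{37}$, $(4)=\Phi_{39}$, $(5)=\Phi_{43}$, $(6)=\Phi_{58}$, $(7)=\Phi_{60}$, $(8)=\Phi_{80}$, $(9)=\Phi_{106}$, $(10)=\Phi_{114}$ and $(11)=\Phi_{30}$, each as an equality of sets of GAP identifiers. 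A useful sanity check is that the cardinalities of these families agree with the counts recorded in Table 1, and one may independently confirm any individual membership by exhibiting the isomorphisms $\theta$ and $\phi$ of Definition \ref{defHall}. This yields the ``if and only if'' assertion, and the values $B_0(G)\simeq C_2$ for the first ten families and $B_0(G)\simeq C_2\times C_2$ for $\Phi_{30}$ are then exactly the ``moreover'' part of Theorem \ref{thMo128}.

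Finally the ``in particular'' clause is formal. Since ${\rm Br}_{\rm nr}(\bC(G)/\bC)\simeq B_0(G)\ne 0$ by Theorem \ref{thBog}, Proposition \ref{propSal} shows that $\bC(G)$ is not retract $\bC$-rational, because over the algebraically closed field $\bC$ retract $\bC$-rationality would force ${\rm Br}_{\rm nr}(\bC(G))=0$; and by the implications $\bC$-rational $\Rightarrow$ stably $\bC$-rational $\Rightarrow$ retract $\bC$-rational recalled after Definition \ref{defr}, $\bC(G)$ is then neither $\bC$-rational nor stably $\bC$-rational.

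The only real labour is in the second step: there is no conceptual difficulty, but the cross-referencing between the Hall--Senior/\cite{JNO90} bookkeeping and the GAP identifiers is delicate and error-prone, and I would want to verify it by a direct machine computation of isoclinism classes rather than relying on the table look-up alone. Everything else is a citation.
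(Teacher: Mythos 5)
Your proposal is correct and follows essentially the same route as the paper: the corollary is obtained by matching the eleven lists of GAP identifiers in Theorem \ref{thMo128} against the isoclinism classification of Table 2 (derived from \cite[Tables I, II, III]{JNO90}), with the ``moreover'' clause read off from Theorem \ref{thMo128} and the non-rationality deduced from Proposition \ref{propSal} and the chain of implications after Definition \ref{defr}. Your extra remark that $B_0$ is an isoclinism invariant (so the locus $B_0\neq 0$ is automatically a union of families) is a harmless reinforcement of what the explicit set equalities already verify.
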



The aim of this paper is to investigate the birational classification 
of $\bC(G)$ for groups $G$ of order $128$. 
In particular, we consider what happens to $\bC(G)$ with $B_0(G)\neq 0$?
The main theorem (Theorem \ref{thm}) of this paper gives a partial 
answer to this question. 

\begin{definition}
\label{defL23}
Let $k$ be a field with {\rm char} $k\neq 2$ and 
$k(X_1,X_2,X_3,X_4,X_5,X_6,X_7)$ be the rational function field 
over $k$ with variables $X_1,X_2,X_3,X_4,X_5,X_6,X_7$.\\
{\rm (i)} {\it The field $L_k^{(2)}$} is defined to be 
$k(X_1,X_2,X_3,X_4,X_5,X_6)^{\langle\rho\rangle}$ 
where $\langle\rho\rangle\simeq C_4$ acts on\\ 
$k(X_1,X_2,X_3,X_4,X_5,X_6)$ by $k$-automorphisms 
\begin{align*}
\rho : &\ X_1 \mapsto X_2, X_2 \mapsto -X_1, X_3 \mapsto X_4, 
X_4 \mapsto X_3,\\
&\ X_5 \mapsto X_6, 
 X_6 \mapsto \frac{(X_1^2X_2^2-1)(X_1^2X_3^2+X_2^2-X_3^2-1)}{X_5}.
\end{align*}
{\rm (ii)} {\it The field $L_k^{(3)}$} 
is defined to be $k(X_1,X_2,X_3,X_4,X_5,X_6,X_7)^{\langle\lambda_1,\lambda_2\rangle}$ 
where $\langle\lambda_1,\lambda_2\rangle \simeq C_2\times C_2$ acts on 
$k(X_1,X_2,X_3,X_4,X_5,X_6,X_7)$ by $k$-automorphisms 
\begin{align*}
\lambda_1 : &\ X_1 \mapsto X_1, X_2 \mapsto \frac{X_1}{X_2}, 
X_3 \mapsto \frac{1}{X_1 X_3}, X_4 \mapsto \frac{X_2 X_4}{X_1 X_3},\\ 
&\ X_5 \mapsto -\frac{X_1 X_6^2-1}{X_5}, X_6 \mapsto -X_6, X_7 \mapsto X_7,\\
\lambda_2 : &\ X_1 \mapsto \frac{1}{X_1}, X_2 \mapsto X_3, X_3 \mapsto X_2, 
 X_4 \mapsto \frac{(X_1 X_6^2-1) (X_1 X_7^2-1)}{X_4},\\
&\ X_5 \mapsto -X_5, X_6 \mapsto -X_1 X_6, X_7 \mapsto -X_1 X_7.
\end{align*}
\end{definition}

\begin{theorem}[see Theorem \ref{thmain2}]\label{thm}
Let $G$ be a group of order $128$. 
Assume that $B_0(G)\neq 0$. 
Then $\bC(G)$ and $L_\bC^{(m)}$ are stably $\bC$-isomorphic where 
\begin{align*}
m=
\begin{cases}
1&{\rm if}\ G\ {\rm belongs\ to\ } \Phi_{16}, \Phi_{31}, \Phi_{37}, 
\Phi_{39}, \Phi_{43}, \Phi_{58}, \Phi_{60}\ {\rm or}\ \Phi_{80},\\
2&{\rm if}\ G\ {\rm belongs\ to\ } \Phi_{106}\ {\rm or}\ \Phi_{114},\\
3&{\rm if}\ G\ {\rm belongs\ to\ } \Phi_{30}.
\end{cases}
\end{align*}
In particular, ${\rm Br}_{\rm nr}(L_\bC^{(1)})\simeq {\rm Br}_{\rm nr}(L_\bC^{(2)})\simeq C_2$ 
and ${\rm Br}_{\rm nr}(L_\bC^{(3)})\simeq C_2\times C_2$ and hence the fields 
$L_\bC^{(1)}$, $L_\bC^{(2)}$ and $L_\bC^{(3)}$ are not $($retract, stably$)$ $\bC$-rational.
\end{theorem}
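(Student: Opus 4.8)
The plan is to reduce the theorem to a finite number of explicit computations carried out one isoclinism family at a time, using Theorem~\ref{thBB} to collapse each family to a single representative group.

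\medskip

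\noindent\emph{Step 1: Reduce to one group per family.}
By the theorem of Bogomolov and B\"ohning (Theorem~\ref{thBB}), if $G_1$ and $G_2$ are isoclinic then $\bC(G_1)$ and $\bC(G_2)$ are stably $\bC$-isomorphic. Hence, to prove the asserted stable isomorphism for \emph{every} group in a family $\Phi_j$ appearing in the statement, it suffices to establish it for a single conveniently chosen representative $G_j$ of that family. I would pick, for each of the eleven families $\Phi_{16}$, $\Phi_{30}$, $\Phi_{31}$, $\Phi_{37}$, $\Phi_{39}$, $\Phi_{43}$, $\Phi_{58}$, $\Phi_{60}$, $\Phi_{80}$, $\Phi_{106}$, $\Phi_{114}$, a representative that is as close as possible to a semidirect product or a central extension with a transparent $\bC$-action, reading off the admissible generators from the GAP database \cite{GAP} and the tables of \cite{JNO90}. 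For the eight families mapped to $m=1$, the target $L_\bC^{(1)}$ is already known (Proposition~\ref{prop2}) to be stably $\bC$-isomorphic to $\bC(G)$ for the order-$64$ family $\Phi_{16}$; I would exploit this by producing, for each order-$128$ representative, a subfield of $\bC(G)$ that is $\bC$-isomorphic to $\bC$ of the corresponding order-$64$ group (e.g.\ via a faithful quotient action or a no-name–type argument), so that the eight cases chain back to a single prior result.

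\medskip

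\noindent\emph{Step 2: Linearize and descend each representative.}
For a fixed representative $G$, the field $\bC(G)$ is the invariant field of the regular representation, but I would instead realize it as $\bC(V)^G$ for a faithful $\bC$-representation $V$ of low dimension, which is legitimate by the no-name lemma (invariance of $\bC(G)$ up to stable isomorphism under replacing the regular representation by any faithful one plus extra variables). Writing $G$ as an extension $1\to N\to G\to Q\to 1$ with $N$ central (or with $N$ abelian and normal), I would first take $N$-invariants: since $N$ is abelian, by Fischer's theorem (Theorem~\ref{thFis}) and the standard multiplicative-action reductions one gets $\bC(V)^N \simeq \bC(Q)$-equivariantly a rational field on which $Q$ acts by monomial (twisted multiplicative) automorphisms; then I would take $Q$-invariants. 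Repeating this one layer at a time, and at each stage using the going-down lemmas for monomial actions (of the type ``$\bC(x_1,\dots,x_n)^{C_2}$ with a linear/monomial involution is rational over its obvious subfield''), one peels the field $\bC(G)$ down, modulo extra purely transcendental variables, to exactly the explicit invariant field $k(X_1,\dots)^{\langle\rho\rangle}$ or $k(X_1,\dots)^{\langle\lambda_1,\lambda_2\rangle}$ of Definition~\ref{defL23} (resp.\ $L_\bC^{(1)}$). The Brauer-group statement at the end of the theorem then follows because ${\rm Br}_{\rm nr}$ is a stable birational invariant (Proposition in the excerpt for unramified cohomology, together with Theorem~\ref{thBog}), and $B_0(G)$ was already computed in Theorem~\ref{thMo128}: it equals $C_2$ for the ten families of type (1)–(10) and $C_2\times C_2$ for $\Phi_{30}$.

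\medskip

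\noindent\emph{Main obstacle.}
The genuine difficulty is Step~2 for the families $\Phi_{106}$, $\Phi_{114}$ ($m=2$) and especially $\Phi_{30}$ ($m=3$), where the target fields $L_\bC^{(2)}$ and $L_\bC^{(3)}$ are given by non-obvious nonlinear automorphisms (note the quadratic and biquadratic numerators in Definition~\ref{defL23}). Producing these exact formulas is not a single clean reduction but a sequence of carefully chosen birational coordinate changes: one must track, through each successive quotient, how a cocycle in $H^2$ (responsible for the nonvanishing $B_0$) manifests as a ``twist'' in the transition functions, and then normalize the resulting monomial-plus-quadratic action into precisely the stated shape. I expect the bulk of the work — and the part most prone to sign and coordinate errors — to be verifying that the birational maps constructed really are $\bC$-isomorphisms and that after discarding the auxiliary variables one lands on the stated $L_\bC^{(m)}$ \emph{on the nose}, rather than on some a priori different but stably isomorphic field; here one should double-check consistency by recomputing ${\rm Br}_{\rm nr}(L_\bC^{(m)})$ directly (e.g.\ via Bogomolov's bicyclic-subgroup criterion applied to the finite group presenting $L_\bC^{(m)}$) and matching it against Theorem~\ref{thMo128}.
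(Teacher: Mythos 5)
Your outline follows the paper's strategy almost exactly: reduce to one representative per isoclinism family via Theorem~\ref{thBB}, replace the regular representation by a low-dimensional faithful representation (the paper uses Theorem~\ref{thHK} for this, and Theorem~\ref{thAHK} plus explicit multiplicative changes of variables to peel off the ``affine'' directions and reduce to a monomial action of a small quotient), and then untwist the resulting monomial action into the explicit presentation of $L_k^{(m)}$. The concluding Brauer-group statements are obtained exactly as you say, from Theorem~\ref{thMo128} (or, for $m=1$, from Proposition~\ref{prop2}) together with stable invariance of ${\rm Br}_{\rm nr}$.

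However, as written the proposal has a genuine gap: the entire mathematical content of the theorem lives in the eleven case-by-case chains of birational coordinate changes, and you supply none of them. You correctly identify that this is ``the bulk of the work,'' but announcing that one must ``normalize the resulting monomial-plus-quadratic action into precisely the stated shape'' is a description of the problem, not a solution; there is no a priori guarantee that the successive quotients land on $L_k^{(m)}$ rather than on some other field with the same Bogomolov multiplier, and the paper's proof consists precisely of exhibiting the explicit generators ($z_i$, $u_i$, $v_i$, $w_i$, $X_i$, with determinant-of-exponent-matrix checks and Lemma~\ref{lemHKY}) that realize each identification. In addition, your proposed shortcut for the eight $m=1$ families --- finding inside each order-$128$ representative a subfield isomorphic to $\bC(G')$ for an order-$64$ group $G'$ in $\Phi_{16}$ --- is not what the paper does and would be hard to justify: only the order-$128$ family $\Phi_{16}$ contains groups of the form $G(2^6,i)\times C_2$ (see the Remark after Table~2), so for $\Phi_{31},\Phi_{37},\Phi_{39},\Phi_{43},\Phi_{58},\Phi_{60},\Phi_{80}$ there is no evident quotient or subgroup inducing such an embedding, and the paper instead reduces each of these directly to the four-variable involution of Definition~\ref{defL01}(ii) (several cases chaining to the intermediate monomial action~(\ref{act227}) of Case~1 rather than to an order-$64$ group). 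Finally, Fischer's theorem plays no role in the reduction; the relevant tools are Theorems~\ref{thHK} and~\ref{thAHK} and explicit multiplicative invariant theory.
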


For $m=1,2$, 
the fields $L_\bC^{(m)}$ and $L_\bC^{(3)}$ are not stably $\bC$-isomorphic 
because their unramified Brauer groups are not isomorphic. 
However, we do not know whether the fields $L_\bC^{(1)}$ and $L_\bC^{(2)}$ are stably 
$\bC$-isomorphic. 
If not, it is interesting to evaluate the higher unramified cohomologies. 
Unfortunately, a useful formula like Bogomolov's formula (Theorem \ref{thBog}) 
or Moravec's formula \cite[Section 3]{Mor12} for $B_0(G)$ 
is unknown for higher unramified cohomologies. 

Theorem \ref{thm} gives another proof of $B_0(G)\simeq C_2$ to 
Theorem \ref{thMo128} when $G$ belongs to 
$\Phi_{16}$, $\Phi_{31}$, $\Phi_{37}$, $\Phi_{39}$, $\Phi_{43}$, 
$\Phi_{58}$, $\Phi_{60}$ or $\Phi_{80}$. 
Especially, 
this proof is based on the result of order $64$ for $\Phi_{16}$ 
(Theorem \ref{thCHKK10}) and it does not 
depend on the computer calculations of GAP. 

Although Theorem \ref{thm} gives only the first step,  
the author hopes that it will stimulate further work 
towards a more complete understanding of 
the (stably) birational classification of $\bC(G)$ for non-abelian groups $G$.

\begin{notation}
Throughout this paper, 
$G$ is a finite group and $k$ is a base field. 
$Z(G)$ denotes the center of $G$. 
For $g, h \in G$, define the commutator 
$[g,h]=g^{-1}h^{-1}gh$. 
The exponent of a group $G$ is defined to be 
${\rm lcm}\{{\rm ord}(g):g\in G\}$ where ${\rm ord}(g)$ is the order of
the element $g$. 

We denote by $\zeta_n$ a primitive $n$-th root of unity 
in a fixed algebraic closure of $k$. 
Whenever we write $\zeta_n\in k$, it is understood that either 
{\rm char} $k=0$ or {\rm char} $k=p$ with $p {\not |}$ $n$. 
We will write 
$\zeta$ for $\zeta_4$ for simplicity, 
$\eta$ for a primitive $8$th root of unity $\zeta_8$ satisfying $\eta^2=\zeta$ 
and $\omega$ for a primitive $16$th root of unity $\zeta_{16}$ 
satisfying $\omega^2=\eta$.

The group
$G(2^7,i)$ of order $128$, or $G(i)$ for short, is the $i$-th group of 
order $128$ in the GAP database \cite{GAP}. 
The version of GAP used in this paper is GAP4, 
Version: 4.4.12 \cite{GAP}.
\end{notation}

\begin{acknowledgments}
Before getting a Ph.D., 
the author stayed Regensburg University for one year: 2004--2005. 
He thanks his teacher Manfred Knebusch who led him to the extensive 
world of quadratic forms. 
He also thanks Ming-chang Kang for many helpful suggestions.
\end{acknowledgments}

\section{Proof of Theorem \ref{thm}}

We will prove Theorem \ref{thm}. 
By Bogomolov and B\"ohning's theorem (Theorem \ref{thBB}), we may choose 
one group $G$ within each isoclinism family $\Phi_j$. 
We will choose the first one, i.e. $G=G(2^7,i)=G(i)$ with the minimal $i$ of the GAP database 
within each isoclinism family (see also Table 2 in Section 3). 
More precisely, we will show the following theorem.
\begin{theorem}\label{thmain2}
Let $G=G(i)$ be the $i$-th group of order $128$ with exponent $e$ 
in the GAP database {\rm \cite{GAP}}.
Let $k$ be a field with {\rm char} $k\neq 2$ and $\zeta_e\in k$.\\
{\rm (i)} If $G$ is one of the groups 
$G(227)$, $G(1345)$, $G(242)$, $G(36)$, $G(1924)$, $G(417)$, 
$G(446)$ and $G(950)$ 
which belong to the isoclinism families $\Phi_{16}$, $\Phi_{31}$, $\Phi_{37}$, 
$\Phi_{39}$, $\Phi_{43}$, $\Phi_{58}$, $\Phi_{60}$ 
and $\Phi_{80}$ respectively, 
then there exists a $k$-injective homomorphism $\varphi : L_k^{(1)}\rightarrow k(G)$ 
such that $k(G)$ is rational over $\varphi(L_k^{(1)})$. 
In particular, $\bC(G)$ and $L_\bC^{(1)}$ are stably $\bC$-isomorphic 
and $B_0(G)\simeq {\rm Br}_{\rm nr}(L_\bC^{(1)})\simeq C_2$;\\
{\rm (ii)} If $G$ is one of the groups 
$G(144)$ and $G(138)$ 
which belong to $\Phi_{106}$ and $\Phi_{114}$ respectively, 
then there exists a $k$-injective homomorphism $\varphi : L_k^{(2)}\rightarrow k(G)$ 
such that $k(G)$ is rational over $\varphi(L_k^{(2)})$. 
In particular, $\bC(G)$ and $L_\bC^{(2)}$ are stably $\bC$-isomorphic 
and $B_0(G)\simeq {\rm Br}_{\rm nr}(L_\bC^{(2)})\simeq C_2$;\\
{\rm (iii)} If $G=G(1544)$ which belongs to $\Phi_{30}$, 
then there exists a $k$-injective homomorphism $\varphi : L_k^{(3)}\rightarrow k(G)$ 
such that $k(G)$ is rational over $\varphi(L_k^{(3)})$. 
In particular, $\bC(G)$ and $L_\bC^{(3)}$ are stably $\bC$-isomorphic and 
$B_0(G)\simeq {\rm Br}_{\rm nr}(L_\bC^{(3)})\simeq C_2\times C_2$. 
\end{theorem}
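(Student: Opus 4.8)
The plan is to establish, for each of the eleven representative groups $G=G(i)$ listed in (i), (ii), (iii), an explicit chain of rationality reductions of the form
\[
k(G)\;=\;k(x_g:g\in G)^G\;\overset{\text{rational}}{\longrightarrow}\; k(\text{fewer variables})^{G/N}\;\overset{\text{rational}}{\longrightarrow}\;\cdots\;\overset{\text{rational}}{\longrightarrow}\;L_k^{(m)}.
\]
First I would reduce from the regular representation to a faithful representation of manageable dimension: using the standard technique (as in \cite{CK01}, \cite{CHKP08}, \cite{CHKK10}), if $V$ is a faithful $G$-module over $k$ obtained from a chain of subgroups and $k(G)=k(V_{\rm reg})^G$, then repeated application of the ``no-name lemma'' lets one peel off the extra variables, so $k(G)$ is rational over $k(V)^G$ for a suitably small faithful $V$. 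For a group of order $128$ one wants to get down to a module of dimension around $7$ or $8$. Then I would analyze the action of a chosen central (or normal abelian) subgroup $N\cong C_2$ or $C_2\times C_2$ first, diagonalizing its action and invoking Theorem~\ref{thFis}-type arguments (or the multiplicative version, since the induced action on the "slots" is a monomial action) to descend to $k(W)^{G/N}$ where $W$ is built from the multiplicative characters; this is where the factors $X_1^2X_2^2-1$, $X_1X_6^2-1$, etc. appearing in Definitions~\ref{defL23} and~\ref{defL01} will emerge, as norms/relative invariants of the quadratic subextension.

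The key steps, in order, are: (1) write down the group $G=G(i)$ explicitly via generators and relations matching the GAP presentation, identify its center, commutator subgroup, and a convenient faithful representation; (2) perform the no-name reduction to a small-dimensional faithful monomial-type action; (3) successively quotient by a central series, at each stage using either Fischer's theorem (Theorem~\ref{thFis}) for the abelian top quotients, or — for the non-abelian steps — the theory of monomial actions and the key technical lemmas on $G$-invariant rationality (fixed fields of the form $k(x_1,\dots,x_n)^G$ with $G$ acting by $x_i\mapsto c_i x_{\sigma(i)}$ are rational whenever a "core" submodule can be split off), reducing the number of variables by one or two at each step; (4) after all reductions, recognize the surviving fixed field as exactly $L_k^{(1)}$, $L_k^{(2)}$, or $L_k^{(3)}$ by producing an explicit change of variables identifying the residual action with the actions of $\langle\tau\rangle$, $\langle\rho\rangle$, or $\langle\lambda_1,\lambda_2\rangle$ in the definitions; (5) conclude stable $\bC$-isomorphism, and read off ${\rm Br}_{\rm nr}$ from the already-known value for order $64$ (Proposition~\ref{prop2} gives ${\rm Br}_{\rm nr}(L_\bC^{(1)})\simeq C_2$ via $\Phi_{16}$ of order $64$) together with Moravec's computation (Theorem~\ref{thMo128}) for the remaining normal forms, invoking the stable-isomorphism invariance of ${\rm Br}_{\rm nr}$ (Proposition~\ref{propSal} and Theorem~\ref{thBB}).

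The main obstacle will be step (3)–(4): engineering the reduction so that the \emph{same} target field $L_k^{(m)}$ is reached from genuinely different groups within the family $\Phi_j$, and more seriously, from \emph{different} families $\Phi_{16},\Phi_{31},\Phi_{37},\Phi_{39},\Phi_{43},\Phi_{58},\Phi_{60},\Phi_{80}$ all landing on $L_k^{(1)}$. Each such group has a different presentation and a different faithful module, so the intermediate monomial actions will differ, and it requires a careful, partly ad hoc sequence of linear and "multiplicative" (Noether–Lenstra style) substitutions to funnel them all into one canonical four-variable action $\tau$. The rationality of $k(G)$ over the image is not automatic at each step — one must verify that the no-name lemma genuinely applies, i.e. that the relevant module is a direct summand (or at least that $H^1$ vanishes), which can fail for monomial actions and forces one to first rectify the action by absorbing $2$-cocycles using $\zeta_e\in k$. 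I expect most of the work, and the length of the paper, to be in producing and checking these explicit substitutions case by case; the conceptual framework (no-name lemma, Fischer, monomial-action rationality criteria, Bogomolov–Böhning) is entirely in place from the cited results.
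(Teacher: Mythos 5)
Your strategy coincides with the paper's: reduce $k(G)=k(V_{\rm reg})^G$ to $k(V)^G$ for a small faithful representation $V$ (the paper does this with Hajja--Kang, Theorem~\ref{thHK}, for a faithful $V$ of dimension $6$, $8$ or $10$ decomposed into irreducibles), peel off variables on which the action is affine over the rest (Theorem~\ref{thAHK}), observe that the center then acts trivially, take multiplicative invariants of the diagonal subgroups by explicit monomial substitutions, and finally exhibit a change of variables identifying the residual action with $\tau$, $\rho$ or $\langle\lambda_1,\lambda_2\rangle$; the Brauer-group statements are then read off exactly as you say, from Proposition~\ref{prop2} for $m=1$ and from Theorem~\ref{thMo128} plus Theorem~\ref{thBB} for $m=2,3$. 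Two small corrections to the framework: the paper needs only the eleven minimal-index representatives $G(227)$, $G(1345)$, $G(242)$, $G(36)$, $G(1924)$, $G(417)$, $G(446)$, $G(950)$, $G(144)$, $G(138)$, $G(1544)$, one per family, because Theorem~\ref{thBB} already handles the rest of each family --- you do not have to ``funnel genuinely different groups within the family'' into $L_k^{(1)}$; and your concern about $H^1$-vanishing and absorbing $2$-cocycles does not arise, since every descent step used is either affine (Theorems~\ref{thHK}, \ref{thAHK}) or an explicitly verified purely multiplicative invariant computation (determinant of the exponent matrix, plus Lemma~\ref{lemHKY}).

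The genuine gap is that the proposal stops exactly where the proof begins. For this theorem the entire mathematical content is the existence of the eleven explicit chains of substitutions: the sequences $y_i\mapsto z_i\mapsto u_i\mapsto v_i\mapsto w_i\mapsto X_i$ with their verifications (that the candidate invariants generate the full fixed field, that the exponent-matrix determinants have the right $2$-adic valuation, that the final action matches Definition~\ref{defL01} or~\ref{defL23} on the nose). Nothing in the cited general machinery guarantees a priori that all eight groups in (i) funnel into the single four-variable action $\tau$, or that $G(144)$ and $G(138)$ both land on $\rho$; this is established only by producing the substitutions, and you produce none of them. As written, the argument establishes only that \emph{if} such reductions exist then the conclusions follow, which is a plan rather than a proof.
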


\bigskip

{\it Proof of Theorem \ref{thmain2}.}

We first prepare the following lemmas which will be used. 

\begin{theorem}[Hajja and Kang {\cite[Theorem 1]{HK95}}] \label{thHK}
Let $L$ be any field and 
$G$ be a finite group acting on $L(x_1,\ldots,x_n)$,
the rational function field of $n$ variables over $L$.
Suppose that\\
{\rm (i)} for any $\sigma \in G$, $\sigma(L)\subset L$;\\
{\rm (ii)} the restriction of the action of $G$ to $L$ is faithful; and\\
{\rm (iii)} for any $\sigma \in G$,
\[
\begin{pmatrix} \sigma (x_1) \\ \sigma (x_2) \\ \vdots \\ \sigma (x_n) \end{pmatrix}
= A(\sigma)\cdot \begin{pmatrix} x_1 \\ x_2 \\ \vdots \\ x_n \end{pmatrix} +B(\sigma)
\]
where $A(\sigma)\in GL_n(L)$ and $B(\sigma)$ is an $n\times 1$ matrix over $L$.\\
Then there exist $z_1,\ldots,z_n\in L(x_1,\ldots,x_n)$ such
that $L(x_1,\ldots,x_n)=L(z_1,\ldots,z_n)$
and $\sigma(z_i)=z_i$ for any $\sigma\in G$, any $1\le i\le n$.
\end{theorem}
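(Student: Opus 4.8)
The plan is to recognize this as a Galois-descent (``no-name'') statement and to reduce it to the generalized Hilbert~90 (Speiser's lemma). First I would set $k=L^G$. Since $G$ is a finite group acting faithfully on $L$ by hypotheses (i) and (ii), Artin's theorem shows that $L/k$ is a finite Galois extension with $\mathrm{Gal}(L/k)\simeq G$. This turns the coefficient field into a genuine Galois module and is the only place the faithfulness hypothesis (ii) is used.

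Next I would linearize the affine action of (iii) by homogenizing. Introduce one extra coordinate $x_0$ and declare $\sigma(x_0)=x_0$ for all $\sigma\in G$. Then (iii) reads $\sigma(x_i)=\sum_{j=1}^{n}A(\sigma)_{ij}x_j+B(\sigma)_i x_0$, so $G$ acts on the $(n+1)$-dimensional $L$-vector space $W=Lx_0\oplus Lx_1\oplus\cdots\oplus Lx_n$ by the matrices $\widetilde A(\sigma)=\left(\begin{smallmatrix}1&0\\ B(\sigma)&A(\sigma)\end{smallmatrix}\right)\in\mathrm{GL}_{n+1}(L)$. Because each $\sigma$ is a ring automorphism of the ambient field, computing $\sigma(\tau(x_i))$ forces the identity $\widetilde A(\sigma\tau)={}^{\sigma}\widetilde A(\tau)\,\widetilde A(\sigma)$ automatically (where ${}^{\sigma}(-)$ denotes entrywise application of $\sigma$); thus $W$ carries a semilinear $G$-action, i.e. $\widetilde A$ represents a class in $H^1(G,\mathrm{GL}_{n+1}(L))$. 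No separate verification of a cocycle condition is needed, since $G$ already acts as field automorphisms.

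The heart of the argument is Speiser's lemma, the vanishing $H^1(G,\mathrm{GL}_{n+1}(L))=1$ (Hilbert~90 for $\mathrm{GL}$): the canonical map $L\otimes_k W^G\xrightarrow{\ \sim\ }W$ is an isomorphism, so the $k$-space $W^G$ of $G$-fixed vectors has dimension $n+1$ and any $k$-basis of $W^G$ is an $L$-basis of $W$. Injectivity of this map is the classical shortest-relation argument, using that $G$ acts faithfully on $L$; surjectivity (the dimension count) follows from Dedekind's independence of characters applied to the averaging operators $w\mapsto\sum_{\sigma\in G}\sigma(c)\,\sigma(w)\in W^G$ as $c$ ranges over $L$. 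Since $x_0$ is itself $G$-fixed, I would extend $\{x_0\}$ to a $k$-basis $x_0,z_1,\ldots,z_n$ of $W^G$. Each $z_i$ then satisfies $\sigma(z_i)=z_i$ for all $\sigma\in G$, and $x_0,z_1,\ldots,z_n$ is an $L$-basis of $W$.

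Finally I would translate back to function fields. As $x_0=1$, expressing the $x_i$ in the basis $x_0,z_1,\ldots,z_n$ writes each $x_i$ as an affine $L$-combination of $z_1,\ldots,z_n$, so $L(x_1,\ldots,x_n)\subseteq L(z_1,\ldots,z_n)$; the reverse inclusion is immediate because each $z_i$ is an $L$-combination of $1,x_1,\ldots,x_n$. Hence $L(x_1,\ldots,x_n)=L(z_1,\ldots,z_n)$ with every $z_i$ fixed by $G$, and the $z_i$ are automatically algebraically independent over $L$ since the extension has transcendence degree $n$ and is generated by these $n$ elements. The only genuinely nontrivial input is Speiser's lemma; everything else is bookkeeping, the mild care-point being to keep the homogenizing coordinate $x_0$ fixed so that the output coordinates are honest affine-linear functions of the original $x_i$.
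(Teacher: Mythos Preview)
Your proof is correct and is essentially the classical ``no-name lemma'' argument via Speiser's generalization of Hilbert~90. Note, however, that the present paper does not actually prove Theorem~\ref{thHK}; it is quoted from \cite[Theorem~1]{HK95} as a tool, so there is no in-paper proof to compare against. For what it is worth, the proof in \cite{HK95} follows the same Galois-descent strategy you outline: pass to the $(n{+}1)$-dimensional $L$-space $W=L\cdot 1+Lx_1+\cdots+Lx_n$ with its semilinear $G$-action, invoke $H^1(G,\mathrm{GL}_{n+1}(L))=1$ to obtain a $k$-basis of $W^G$ containing $1$, and read off the $z_i$. Your write-up is clean; the only cosmetic point is that your phrasing ``introduce one extra coordinate $x_0$'' followed by ``as $x_0=1$'' would be clearer if you simply took $W$ to be the $L$-span of $1,x_1,\dots,x_n$ inside $L(x_1,\dots,x_n)$ from the start.
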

\begin{theorem}[Ahmad, Hajja and Kang {\cite[Theorem 3.1]{AHK00}}]\label{thAHK}
Let $L$ be any field, $L(x)$ be the rational function field 
over $L$ with variable $x$ and 
$G$ be a finite group acting on $L(x)$. 
Suppose that, for any $\sigma\in G$, 
$\sigma (L)\subset L$ and $\sigma(x)=a_\sigma x+b_\sigma$ where 
$a_\sigma, b_\sigma\in L$ and $a_\sigma\neq 0$. 
Then $L(x)^G=L^G(f)$ for some polynomial $f\in L[x]$. 
In fact, if 
$m={\rm min} \{${\rm deg} $g(x) : g(x)\in L[x]^G\setminus L\}$, 
any polynomial $f\in L[x]^G$ with {\rm deg} $f=m$ satisfies 
the property $L(x)^G=L^G(f)$. 
\end{theorem}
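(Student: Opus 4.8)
The plan is to exploit that the hypothesis makes $G$ act on $L[x]$ preserving degrees (each $\sigma(x)=a_\sigma x+b_\sigma$ has $a_\sigma\neq0$), and to reduce everything to two degree counts together with Hilbert 90. Set $N=\{\sigma\in G:\sigma|_L=\mathrm{id}_L\}$; since $\sigma(L)=L$ for every $\sigma$, this is a normal subgroup, $G/N$ embeds faithfully in $\mathrm{Aut}(L)$, and by Artin's theorem $L/L^G$ is Galois with group $G/N$ and $[L:L^G]=|G|/|N|$. Replacing $G$ by its image in $\mathrm{Aut}_k L(x)$ (which leaves the fixed field unchanged) I may assume the action on $L(x)$ is faithful, so $[L(x):L(x)^G]=|G|$. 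The first routine ingredient is the \emph{master identity}: for any nonconstant $h\in L[x]^G$ of degree $d$,
\[
[L(x):L^G(h)]=[L(x):L(h)]\cdot[L(h):L^G(h)]=d\cdot[L:L^G],
\]
where $[L(x):L(h)]=d$ because $x$ is a root of the degree-$d$ polynomial $h(T)-h\in L(h)[T]$, and $[L(h):L^G(h)]=[L:L^G]$ because $h$ is transcendental over $L$, so a basis of $L/L^G$ stays a basis of $L(h)/L^G(h)$. Since $L[x]^G$ contains the orbit product $\prod_{\sigma\in G}\sigma(x)$ of degree $|G|$, the integer $m$ is well defined. Granting for a moment that $m=|N|$, the theorem follows at once: for any $f\in L[x]^G$ with $\deg f=m$ the master identity gives $[L(x):L^G(f)]=|N|\cdot(|G|/|N|)=|G|=[L(x):L(x)^G]$, and since $L^G(f)\subseteq L(x)^G$ this forces $L(x)^G=L^G(f)$. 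Thus the whole content is the single equality $m=|N|$.

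For the lower bound, let $f\in L[x]^G$ be nonconstant; then $f$ is $N$-invariant, $L(f)\subseteq L(x)^N$, and hence $\deg f=[L(x):L(f)]\geq[L(x):L(x)^N]=|N|$ by Artin applied to $N$. For the reverse inequality I first analyze $N$, which fixes $L$ pointwise. The orbit product $\prod_{\sigma\in N}\sigma(x)$ is an $N$-invariant polynomial of degree $|N|$, so a minimal nonconstant $g\in L[x]^N$ has $\deg g\leq|N|$; the master identity with $L^N=L$ gives $[L(x):L(g)]=\deg g$, and $L(g)\subseteq L(x)^N$ forces $\deg g\geq|N|$. Hence $\deg g=|N|$ and $L(x)^N=L(g)$, with $g$ a polynomial.

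It remains to descend through $G/N$, which acts on $L(x)^N=L(g)$ with fixed field $L(x)^G$ and acts faithfully on $L$. For $\tau\in G$ the element $\tau(g)\in L[x]$ is again $N$-invariant, hence lies in $L(g)$; being a polynomial in $x$ it cannot have the finite poles that a nonconstant denominator in $g$ would produce, so $\tau(g)\in L[g]$, and comparing $x$-degrees ($\deg_x\tau(g)=|N|=\deg_x g$) shows $\tau(g)=a_\tau g+b_\tau$ with $a_\tau\in L^\times$ and $b_\tau\in L$. Thus $G/N$ acts affinely and faithfully on $L(g)/L$, the exact situation in which Hilbert 90 applies cleanly. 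The multiplicative cocycle relation $a_{\sigma\tau}=a_\sigma\,\sigma(a_\tau)$ together with $H^1(G/N,L^\times)=0$ lets me write $a_\tau=\tau(\gamma)/\gamma$ and rescale $g_1=g/\gamma$ so that $\tau(g_1)=g_1+c_\tau$; then the additive form of Hilbert 90 (the normal basis theorem), $H^1(G/N,L)=0$ for the additive module, gives $c_\tau=\tau(\delta)-\delta$, so $z=g_1-\delta$ is $G$-invariant, linear in $g$, hence a $G$-invariant polynomial in $x$ of degree $|N|$. This yields $m\leq|N|$, completing $m=|N|$ and with it the theorem.

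The main obstacle is precisely the subgroup $N$ of elements acting trivially on $L$ but nontrivially on $x$: such elements can act by multiplication by a root of unity, so $G$ need not act faithfully on $L$, the naive one-step Hilbert 90 trivialization fails, and $m$ genuinely exceeds $1$. Isolating $N$, showing that $L(x)^N$ is generated over $L$ by a single polynomial of degree $|N|$, and only then applying Hilbert 90 to the now-faithful quotient $G/N$, is exactly what makes the degree bookkeeping close up and pins down $m=|N|$.
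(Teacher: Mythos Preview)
The paper does not give its own proof of this theorem; it is quoted from \cite{AHK00} and used as a tool in the proof of Theorem~\ref{thmain2}. So there is nothing to compare against, and the question is simply whether your argument is sound. It is.

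The structure---isolate $N=\ker(G\to\mathrm{Aut}\,L)$, prove $L(x)^N=L(g)$ with $\deg g=|N|$, then descend through the faithful Galois action of $G/N$ on $L$ via Hilbert~90 (multiplicative, then additive) to produce a $G$-invariant polynomial of degree $|N|$---is exactly the standard route, and your ``master identity'' $[L(x):L^G(h)]=(\deg h)\,[L:L^G]$ is the right bookkeeping device. Two places could be phrased more carefully: (i) the claim $\tau(g)\in L[g]$ is justified not by the informal ``no finite poles'' remark but by the fact that $x$ is integral over $L[g]$ (it satisfies the monic polynomial $c^{-1}(g(T)-g)$, where $c$ is the leading coefficient of $g$) and $L[g]$ is integrally closed in $L(g)$, so $L[x]\cap L(g)=L[g]$; (ii) the cocycles $\tau\mapsto a_\tau$ and $\tau\mapsto c_\tau$ are constant on $N$-cosets (since $n(g)=g$ for $n\in N$), so they are genuinely $1$-cocycles for $G/N$ acting on $L^\times$ and on $L$, which is what licenses the Hilbert~90 step for the Galois extension $L/L^G$. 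With those two points made explicit, the proof is complete.
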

\begin{lemma}[Hoshi, Kitayama and Yamasaki {\cite[Lemma 3.9]{HKY11}}]\label{lemHKY}
Let $k$ be a field with {\rm char} $k\neq 2$ and 
$\langle\tau_3\rangle\simeq C_2$ act on the rational function 
field $k(x,y,z)$ over $k$ with variables $x,y,z$ 
by $k$-automorphisms 
\begin{align*}
\tau_3:x \mapsto y\mapsto x,\ 
z\mapsto \frac{c}{xyz}\mapsto z,\ c\in k^\times. 
\end{align*}
Then $k(x,y,z)^{\langle \tau_3\rangle}=k(t_1,t_2,t_3)$ where
\begin{align*} 
t_1=\frac{xy}{x+y},\quad t_2=\frac{xyz+\frac{c}{z}}{x+y},\quad 
t_3=\frac{xyz-\frac{c}{z}}{x-y}.
\end{align*}
\end{lemma}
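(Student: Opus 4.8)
The plan is to establish the two things the statement really asserts: that $t_1,t_2,t_3$ are algebraically independent over $k$ (so $k(t_1,t_2,t_3)$ is a genuine rational function field of transcendence degree three), and that these three elements generate the entire fixed field $k(x,y,z)^{\langle\tau_3\rangle}$. My strategy is to invert the map explicitly, recovering $x,y,z$ from $t_1,t_2,t_3$ together with a single anti-invariant, and then close the argument with a short degree-and-transcendence count.

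First I would record that each $t_i$ is $\tau_3$-invariant, so that $k(t_1,t_2,t_3)\subseteq k(x,y,z)^{\langle\tau_3\rangle}$. Writing $u:=xyz$ and $v:=c/z$, a direct check gives $\tau_3(u)=v$ and $\tau_3(v)=u$, while $\tau_3$ interchanges $x$ and $y$. Hence $u+v$, $xy$ and $x+y$ are fixed, whereas $u-v$ and $x-y$ each change sign; consequently $t_1=xy/(x+y)$, $t_2=(u+v)/(x+y)$ and $t_3=(u-v)/(x-y)$ are all $\tau_3$-invariant.

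The heart of the proof is to recover the generators. Introduce the symmetric functions $s:=x+y$ and $p:=xy$, together with the anti-invariant $d:=x-y$, so that $t_1=p/s$ and $d^2=s^2-4p$. By definition $t_2=(u+v)/s$ and $t_3=(u-v)/d$, and moreover $uv=xy\cdot c=pc=t_1sc$. The plan is to eliminate $u,v,d$ through the identity $(u+v)^2-(u-v)^2=4uv$, which becomes $t_2^2s^2-t_3^2d^2=4t_1sc$; substituting $d^2=s^2-4t_1s$ and cancelling one factor of the nonzero element $s$ gives a \emph{linear} equation for $s$, yielding the explicit formula $s=4t_1(c-t_3^2)/(t_2^2-t_3^2)\in k(t_1,t_2,t_3)$. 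From this one gets $p=t_1s$ and $d^2=s^2-4t_1s$ in $k(t_1,t_2,t_3)$, and then, recovering $u=(t_2s+t_3d)/2$, also $z=u/p$, $x=(s+d)/2$, $y=(s-d)/2$, all lying in $k(t_1,t_2,t_3)(d)$. Therefore $k(x,y,z)=k(t_1,t_2,t_3)(d)$.

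Since $d^2\in k(t_1,t_2,t_3)$ while $\tau_3(d)=-d\neq d$ (this is where {\rm char} $k\neq 2$ enters), the extension $k(x,y,z)/k(t_1,t_2,t_3)$ is quadratic. As $k(x,y,z)$ has transcendence degree three over $k$ and is finite over $k(t_1,t_2,t_3)$, the elements $t_1,t_2,t_3$ must be algebraically independent over $k$, so $k(t_1,t_2,t_3)$ is rational of the claimed shape. Finally, because $\langle\tau_3\rangle$ acts faithfully of order two we have $[k(x,y,z):k(x,y,z)^{\langle\tau_3\rangle}]=2$, and the tower $k(t_1,t_2,t_3)\subseteq k(x,y,z)^{\langle\tau_3\rangle}\subseteq k(x,y,z)$ together with $[k(x,y,z):k(t_1,t_2,t_3)]=2$ forces $k(x,y,z)^{\langle\tau_3\rangle}=k(t_1,t_2,t_3)$. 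The only genuinely nontrivial step is spotting the combinations $u\pm v$ and $x\pm y$ that linearize the elimination and produce the closed formula for $s$; once that is in hand, the remaining bookkeeping is routine.
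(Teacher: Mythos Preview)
Your argument is correct. The key identity $(u+v)^2-(u-v)^2=4uv=4pc$ does indeed, after substituting $p=t_1s$ and $d^2=s^2-4t_1s$, collapse to a relation linear in $s$, giving $s=4t_1(c-t_3^2)/(t_2^2-t_3^2)$; one should perhaps remark that $t_2^2-t_3^2$ is not the zero element of $k(x,y,z)$ (a single specialization such as $x=2$, $y=1$, $z=2$, $c=1$ suffices), but otherwise every step checks out. The degree-two/degree-two tower argument at the end is standard and uses ${\rm char}\,k\neq 2$ exactly where it is needed.

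As for comparison: the paper does not prove this lemma at all---it merely quotes it from \cite[Lemma~3.9]{HKY11} and uses it as a black box in Case~3 of the proof of Theorem~\ref{thmain2}. Your write-up therefore supplies a complete, self-contained verification that the paper omits. The approach you take (pass to the symmetric/antisymmetric combinations $s,p,d$ and $u\pm v$, then eliminate) is the natural one and is essentially how such explicit invariant computations are carried out in the source \cite{HKY11}; there is no materially different method on offer.
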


\bigskip

We will separate the proof of Theorem \ref{thmain2} {\rm (i), (ii), (iii)} 
into Case $1$ to Case $11$:\\

\begin{tabular}{ll} 
{\rm (i)} & 
Case 1: $G=G(2^7,227)$ which belongs to $\Phi_{16}$;\\
& Case 2: $G=G(2^7,1345)$ which belongs to $\Phi_{31}$;\\ 
& Case 3: $G=G(2^7,242)$ which belongs to $\Phi_{37}$;\\ 
& Case 4: $G=G(2^7,36)$ which belongs to $\Phi_{39}$;\\
& Case 5: $G=G(2^7,1924)$ which belongs to $\Phi_{43}$;\\ 
& Case 6: $G=G(2^7,417)$ which belongs to $\Phi_{58}$;\\
& Case 7: $G=G(2^7,446)$ which belongs to $\Phi_{60}$;\\
& Case 8; $G=G(2^7,950)$ which belongs to $\Phi_{80}$;\\
{\rm (ii)} & 
Case 9: $G=G(2^7,144)$ which belongs to $\Phi_{106}$;\\
& Case 10: $G=G(2^7,138)$ which belongs to $\Phi_{114}$;\\
{\rm (iii)} & 
Case 11: $G=G(2^7,1544)$ which belongs to $\Phi_{30}$.
\end{tabular}

\bigskip

The generators and the relations of the groups $G=G(2^7,i)$ 
can be found in the GAP database, e.g. 
\verb+PrintPcpPresentation(PcGroupToPcpGroup(SmallGroup(2^7,i)))+.\\
Recall that $\zeta=\zeta_4$ is a primitive $4$th roof of unity, 
$\eta$ is a primitive $8$th root of unity satisfying $\eta^2=\zeta$ 
and $\omega$ is a primitive $16$th root of unity satisfying $\omega^2=\eta$.\\

{Case $1$: $G=G(2^7,227)$ which belongs to $\Phi_{16}$.}

$G=\langle g_1,g_2,g_3,g_4,g_5,g_6,g_7\rangle$ with relations 
$g_1^2=g_5$, $g_2^2=1$, $g_3^2=1$, $g_4^2=g_6$, 
$g_5^2=g_7$, $g_6^2=1$, $g_7^2=1$, 
$Z(G)=\langle g_5,g_6,g_7\rangle$, 
$[g_2,g_1]=g_4$, $[g_3,g_1]=g_7$, $[g_3,g_2]=g_6g_7$, 
$[g_4,g_1]=g_6$, $[g_4,g_2]=g_6$.

There exists a faithful representation 
$\rho : G\rightarrow GL(V_{227})\simeq GL_6(k)$ 
of dimension $6$ 
which is decomposable into two irreducible components 
$V_{227}\simeq U_4\oplus U_2$ of dimension $4$ and $2$ respectively. 
By Theorem \ref{thHK}, $k(G)$ is rational over $k(V_{227})^G$. 
Hence it is enough to show that $k(V_{227})^G$ 
is rational over $\varphi(L_k^{(1)})$. 

The action of $G$ on $k(V_{227})=k(y_1,y_2,y_3,y_4,y_5,y_6)$ 
is given by 
\begin{align*}
g_1&:y_1\mapsto \z y_4,y_2\mapsto -\z y_3,y_3\mapsto -y_2,y_4\mapsto y_1,y_5\mapsto \eta y_6,y_6\mapsto \eta y_5,\\
g_2&:y_1\mapsto y_3,y_2\mapsto y_4,y_3\mapsto y_1,y_4\mapsto y_2,y_5\mapsto y_6,y_6\mapsto y_5,\\
g_3&:y_1\mapsto -y_1,y_2\mapsto y_2,y_3\mapsto -y_3,y_4\mapsto y_4,y_5\mapsto -y_5,y_6\mapsto y_6,\\
g_4&:y_1\mapsto -\z y_1,y_2\mapsto -\z y_2,y_3\mapsto \z y_3,y_4\mapsto \z y_4,y_5\mapsto y_5,y_6\mapsto y_6,\\
g_5&:y_1\mapsto \z y_1,y_2\mapsto \z y_2,y_3\mapsto \z y_3,y_4\mapsto \z y_4,y_5\mapsto \z y_5,y_6\mapsto \z y_6,\\
g_6&:y_1\mapsto -y_1,y_2\mapsto -y_2,y_3\mapsto -y_3,y_4\mapsto -y_4,y_5\mapsto y_5,y_6\mapsto y_6,\\
g_7&:y_1\mapsto -y_1,y_2\mapsto -y_2,y_3\mapsto -y_3,y_4\mapsto -y_4,y_5\mapsto -y_5,y_6\mapsto -y_6.
\end{align*}

Define $z_1=\frac{y_1}{y_4}$, $z_2=\frac{y_2}{y_4}$, 
$z_3=\frac{y_3}{y_4}$, $z_4=\frac{y_5}{y_6}$, $z_5=y_4$, $z_6=y_6$. 
Then $k(y_1,y_2,y_3,y_4,y_5,y_6)=k(z_1,z_2,z_3,z_4,z_5,z_6)$ and 
\begin{align*}
g_1&:z_1\mapsto \tfrac{\z}{z_1},z_2\mapsto -\tfrac{\z z_3}{z_1}, z_3\mapsto -\tfrac{z_2}{z_1}, 
z_4\mapsto \tfrac{1}{z_4},z_5\mapsto z_1z_5,z_6\mapsto \eta z_4z_6,\\
g_2&:z_1\mapsto \tfrac{z_3}{z_2},z_2\mapsto \tfrac{1}{z_2},z_3\mapsto \tfrac{z_1}{z_2},z_4\mapsto \tfrac{1}{z_4},
z_5\mapsto z_2z_5,z_6\mapsto z_4z_6,\\
g_3&:z_1\mapsto -z_1,z_2\mapsto z_2,z_3\mapsto -z_3,z_4\mapsto -z_4,z_5\mapsto z_5,z_6\mapsto z_6,\\
g_4&:z_1\mapsto -z_1,z_2\mapsto -z_2,z_3\mapsto z_3,z_4\mapsto z_4,z_5\mapsto \z z_5,z_6\mapsto z_6,\\
g_5&:z_1\mapsto z_1,z_2\mapsto z_2,z_3\mapsto z_3,z_4\mapsto z_4,z_5\mapsto \z z_5,z_6\mapsto \z z_6,\\
g_6&:z_1\mapsto z_1,z_2\mapsto z_2,z_3\mapsto z_3,z_4\mapsto z_4,z_5\mapsto -z_5,z_6\mapsto z_6,\\
g_7&:z_1\mapsto z_1,z_2\mapsto z_2,z_3\mapsto z_3,z_4\mapsto z_4,z_5\mapsto -z_5,z_6\mapsto -z_6.
\end{align*}

Apply Theorem \ref{thAHK} twice to $k(z_1,z_2,z_3,z_4)(z_5,z_6)$, 
$k(V_{227})^G$ $=$ 
$k(z_1,z_2,z_3,z_4,z_5,z_6)^G$ is rational over 
$k(z_1,z_2,z_3,z_4)^G$. 
We find that 
$k(z_1,z_2,z_3,z_4)^G
=k(z_1,z_2,z_3,z_4)^{\langle g_1,g_2,g_3,g_4\rangle}$ 
because $Z(G)=\langle g_5, g_6, g_7\rangle$ acts on $k(z_1,z_2,z_3,z_4)$ trivially. 
Thus it suffices to show that 
$k(z_1,z_2,z_3,z_4)^{\langle g_1,g_2,g_3,g_4\rangle}$ 
is rational over $\varphi(L_k^{(1)})$.

Define $u_1=\frac{z_2z_3}{z_1}$, $u_2=\frac{z_1}{z_2z_4}$, 
$u_3=z_3z_4$, $u_4=\frac{z_3}{z_1z_2}$.
Then $k(u_1,u_2,u_3,u_4)\subset 
k(z_1,z_2,z_3,z_4)^{\langle g_3,g_4\rangle}$ and 
the field extension degree 
$[k(z_1,z_2,z_3,z_4):k(u_1,u_2,u_3,u_4)]=4$ because 
the determinant of the matrix of exponents 
of $u_1$, $u_2$, $u_3$, $u_4$ with respect to $z_1$, $z_2$, $z_3$, $z_4$
is $4$: 
\begin{align}
\det\left(
\begin{array}{cccc}
 -1 & 1 & 0 & -1 \\
 1 & -1 & 0 & -1 \\
 1 & 0 & 1 & 1 \\
 0 & -1 & 1 & 0 \\
\end{array}
\right)=4.\label{eqdet}
\end{align}
Hence we have 
$k(z_1,z_2,z_3,z_4)^{\langle g_3,g_4\rangle}=k(u_1,u_2,u_3,u_4)$ 
and 
\begin{align}
g_1&:u_1\mapsto u_1,u_2\mapsto -\tfrac{1}{u_1u_2},u_3\mapsto -\tfrac{u_1}{u_3},
u_4\mapsto -\tfrac{1}{u_4},\label{act227}\\
g_2&:u_1\mapsto \tfrac{1}{u_1},u_2\mapsto u_3,u_3\mapsto u_2,u_4\mapsto \tfrac{1}{u_4}.\nonumber
\end{align}

Define 
$v_1=\bigl(\frac{u_1+1}{u_1-1}\bigr)\bigl(\frac{u_4+1}{u_4-1}\bigr)$, 
$v_2=u_2+u_3$, 
$v_3=(u_2-u_3)\bigl(\frac{u_1+1}{u_1-1}\bigr)$, 
$v_4=\bigl(\frac{u_4+1}{u_4-1}\bigr)\big/\bigl(\frac{u_1+1}{u_1-1}\bigr)$.
Then $k(u_1,u_2,u_3,u_4)^{\langle g_2\rangle}=k(v_1,v_2,v_3,v_4)$ and 
\begin{align*}
g_1:v_1\mapsto -\tfrac{1}{v_4},v_2\mapsto -\tfrac{4v_1(v_1v_2+v_2v_4+2v_3v_4)}{(v_1-v_4)(v_1v_2^2-v_3^2v_4)}, 
v_3\mapsto \tfrac{4v_1(2v_1v_2+v_1v_3+v_3v_4)}{(v_1-v_4)(v_1v_2^2-v_3^2v_4)},
v_4\mapsto -\tfrac{1}{v_1}.
\end{align*}

Define $w_1=v_1$, $w_2=v_1v_2+v_2v_4+2v_3v_4$, 
$w_3=2v_1v_2+v_1v_3+v_3v_4$, $w_4=\tfrac{v_1}{v_4}$. 
Then $k(v_1,v_2,v_3,v_4)=k(w_1,w_2,w_3,w_4)$ and 
\begin{align*}
g_1: w_1\mapsto -\tfrac{w_4}{w_1}, 
w_2\mapsto -\tfrac{4(w_4-1)(w_2-2w_3+w_2w_4)}{w_2^2w_4-w_3^2}, 
w_3\mapsto \tfrac{4(w_4-1)(w_3-2w_2w_4+w_3w_4)}{w_2^2w_4-w_3^2},
w_4\mapsto w_4.
\end{align*}

We also define 
$X_1=\frac{w_2w_4-w_3}{w_2-w_3}$, 
$X_2=\z w_1$, 
$X_3=\frac{(w_2-2w_3+w_2w_4)(w_2^2w_4-w_3^2)}{2w_2(w_2-w_3)(w_4-1)}$, 
$X_4=w_4$. 
Then $k(w_1,w_2,w_3,w_4)=k(X_1,X_2,X_3,X_4)$ and 
\begin{align*}
g_1:X_1\mapsto -X_1,\ 
X_2\mapsto \tfrac{X_4}{X_2},\ 
X_3\mapsto \tfrac{(X_4-1)(X_4-X_1^2)}{X_3},\ 
X_4\mapsto X_4.
\end{align*}
This action of $g_1$ on $k(X_1,X_2,X_3,X_4)$ 
and that of $\tau$ on $k(X_1,X_2,X_3,X_4)$ 
in Definition \ref{defL01} (ii) 
are exactly the same. 
Hence $k(V_{227})^G$ is rational over $\varphi(L_k^{(1)})$.\\

{Case $2$: $G=G(2^7,1345)$ which belongs to $\Phi_{31}$.}

$G=\langle g_1,g_2,g_3,g_4,g_5,g_6,g_7\rangle$ with relations 
$g_1^2=g_2^2=g_3^2=g_4^2=g_5^2=g_6^2=g_7^2=1$, 
$Z(G)=\langle g_5,g_6,g_7\rangle$, 
$[g_2,g_1]=g_5$, $[g_3,g_1]=g_6$, $[g_3,g_2]=g_7$, 
$[g_4,g_3]=g_5$.

There exists a faithful representation 
$\rho : G\rightarrow GL(V_{1345})\simeq GL_8(k)$ 
of dimension $8$ 
which is decomposable into three irreducible components 
$V_{1345}\simeq U_4\oplus U_2\oplus U_2^\prime$ of dimension $4$, $2$ 
and $2$ respectively. 
By Theorem \ref{thHK}, $k(G)$ is rational over $k(V_{1345})^G$. 
We will show that $k(V_{1345})^G$ is rational over $\varphi(L_k^{(1)})$. 

The action of $G$ on $k(V_{1345})=k(y_1,y_2,y_3,y_4,y_5,y_6,y_7,y_8)$ 
is given by 
\begin{align*}
g_1&:y_1\mapsto y_3,y_2\mapsto y_4,y_3\mapsto y_1,y_4\mapsto y_2,y_5\mapsto y_6,y_6\mapsto y_5,y_7\mapsto y_7,
y_8\mapsto y_8,\\
g_2&:y_1\mapsto -y_1,y_2\mapsto -y_2,y_3\mapsto y_3,y_4\mapsto y_4,y_5\mapsto y_5,y_6\mapsto y_6,
y_7\mapsto y_8,y_8\mapsto y_7,\\
g_3&:y_1\mapsto y_2,y_2\mapsto y_1,y_3\mapsto y_4,y_4\mapsto y_3,y_5\mapsto -y_5,y_6\mapsto y_6,
y_7\mapsto -y_7,y_8\mapsto y_8,\\
g_4&:y_1\mapsto -y_1,y_2\mapsto y_2,y_3\mapsto -y_3,y_4\mapsto y_4,y_5\mapsto y_5,y_6\mapsto y_6,
y_7\mapsto y_7,y_8\mapsto y_8,\\
g_5&:y_1\mapsto -y_1,y_2\mapsto -y_2,y_3\mapsto -y_3,y_4\mapsto -y_4,y_5\mapsto y_5,y_6\mapsto y_6,
y_7\mapsto y_7,y_8\mapsto y_8,\\
g_6&:y_1\mapsto y_1,y_2\mapsto y_2,y_3\mapsto y_3,y_4\mapsto y_4,y_5\mapsto -y_5,y_6\mapsto -y_6,
y_7\mapsto y_7,y_8\mapsto y_8,\\
g_7&:y_1\mapsto y_1,y_2\mapsto y_2,y_3\mapsto y_3,y_4\mapsto y_4,y_5\mapsto y_5,y_6\mapsto y_6,
y_7\mapsto -y_7,y_8\mapsto -y_8.
\end{align*}

Define $z_1=\frac{y_1}{y_4}$, $z_2=\frac{y_2}{y_4}$, 
$z_3=\frac{y_3}{y_4}$, $z_4=\frac{y_5}{y_6}$, $z_5=\frac{y_7}{y_8}$, 
$z_6=y_4$, $z_7=y_6$, $z_8=y_8$. 
Then $k(y_1,y_2,y_3,y_4,y_5,y_6,$ $y_7,y_8)
=k(z_1,z_2,z_3,z_4,z_5,z_6,z_7,z_8)$ and 
\begin{align*}
g_1&:z_1\mapsto \tfrac{z_3}{z_2},z_2\mapsto \tfrac{1}{z_2},z_3\mapsto \tfrac{z_1}{z_2},z_4\mapsto \tfrac{1}{z_4},z_5\mapsto z_5,z_6\mapsto z_2z_6,z_7\mapsto z_4z_7,z_8\mapsto z_8,\\
g_2&:z_1\mapsto -z_1,z_2\mapsto -z_2,z_3\mapsto z_3,z_4\mapsto z_4,z_5\mapsto \tfrac{1}{z_5},z_6\mapsto z_6,z_7\mapsto z_7,z_8\mapsto z_5z_8,\\
g_3&:z_1\mapsto \tfrac{z_2}{z_3},z_2\mapsto \tfrac{z_1}{z_3},z_3\mapsto \tfrac{1}{z_3},z_4\mapsto -z_4,z_5\mapsto -z_5,z_6\mapsto z_3z_6,z_7\mapsto z_7,z_8\mapsto z_8,\\
g_4&:z_1\mapsto -z_1,z_2\mapsto z_2,z_3\mapsto -z_3,z_4\mapsto z_4,z_5\mapsto z_5,z_6\mapsto z_6,z_7\mapsto z_7,z_8\mapsto z_8,\\
g_5&:z_1\mapsto z_1,z_2\mapsto z_2,z_3\mapsto z_3,z_4\mapsto z_4,z_5\mapsto z_5,z_6\mapsto -z_6,z_7\mapsto z_7,z_8\mapsto z_8,\\
g_6&:z_1\mapsto z_1,z_2\mapsto z_2,z_3\mapsto z_3,z_4\mapsto z_4,z_5\mapsto z_5,z_6\mapsto z_6,z_7\mapsto -z_7,z_8\mapsto z_8,\\
g_7&:z_1\mapsto z_1,z_2\mapsto z_2,z_3\mapsto z_3,z_4\mapsto z_4,z_5\mapsto z_5,z_6\mapsto z_6,z_7\mapsto z_7,z_8\mapsto -z_8.
\end{align*}

Apply Theorem \ref{thAHK} three times to 
$k(z_1,z_2,z_3,z_4,z_5)(z_6,z_7,z_8)$, 
$k(z_1,z_2,z_3,z_4,z_5,z_6,z_7,z_8)^G$ is rational over 
$k(z_1,z_2,z_3,z_4,z_5)^G$. 
We find that 
$k(z_1,z_2,z_3,z_4,z_5)^G
=k(z_1,z_2,z_3,z_4,z_5)^{\langle g_1,g_2,g_3,g_4\rangle}$ 
because $Z(G)=\langle g_5, g_6, g_7\rangle$ acts on $k(z_1,z_2,z_3,z_4,z_5)$ trivially. 
It suffices to show that the invariant field 
$k(z_1,z_2,z_3,z_4,z_5)^{\langle g_1,g_2,g_3,g_4\rangle}$ is rational 
over $\varphi(L_k^{(1)})$. 

Define $u_1=\frac{z_1z_2}{z_3}$, $u_2=\frac{z_2z_3}{z_1}$, 
$u_3=\frac{z_1z_3}{z_2}$, $u_4=z_4$, $u_5=\frac{z_5+1}{z_2(z_5-1)}$.
Note that $\frac{u_5^\prime+1}{u_5^\prime-1}=z_5$ where $u_5^\prime=u_5 z_2$.
Hence $k(u_1,u_2,u_3,u_4,u_5)=k(u_1,u_2,u_3,u_4,\frac{z_5}{z_2})$. 
By evaluating the determinant of the matrix $M$ 
of exponents as in Case 1 
(see the equation (\ref{eqdet})), we have $\det M=4$, 
$k(z_1,z_2,z_3,z_4,z_5)^{\langle g_2,g_4\rangle}
=k(u_1,u_2,u_3,u_4,u_5)$ 
and 
\begin{align*}
g_1&:u_1\mapsto \tfrac{1}{u_1}, u_2\mapsto \tfrac{1}{u_2}, 
u_3\mapsto u_3, u_4\mapsto \tfrac{1}{u_4}, u_5\mapsto u_1u_2u_5,\\
g_3&:u_1\mapsto u_1, u_2\mapsto \tfrac{1}{u_2}, 
u_3\mapsto \tfrac{1}{u_3}, u_4\mapsto -u_4, u_5\mapsto \tfrac{1}{u_1u_5}.
\end{align*}

Define 
$v_1=\bigl(\frac{u_1+1}{u_1-1}\bigr)^2$, 
$v_2=\bigl(\frac{u_1+1}{u_1-1}\bigr)\bigl(\frac{u_2+1}{u_2-1}\bigr)$, 
$v_3=\bigl(\frac{u_1+1}{u_1-1}\bigr)\bigl(\frac{u_2+1}{u_2-1}\bigr)
\bigl(\frac{u_3+1}{u_3-1}\bigr)$,
$v_4=\bigl(\frac{u_1+1}{u_1-1}\bigr)\bigl(\frac{u_4+1}{u_4-1}\bigr)$, 
$v_5=(u_1u_2+1)u_5$.
Then $k(u_1,u_2,u_3,u_4,u_5)^{\langle g_1\rangle}=k(v_1,v_2,v_3,v_4,v_5)$ 
and 
\begin{align*}
g_3:v_1\mapsto v_1, 
v_2\mapsto -v_2, v_3\mapsto v_3, v_4\mapsto \tfrac{v_1}{v_4}, 
v_5\mapsto \tfrac{4v_1(v_2+1)(v_2-1)}{(v_1-1)(v_2^2-v_1)v_5}.
\end{align*}

Define $X_1=\frac{v_1}{v_2}$, $X_2=v_4$, 
$X_3=\frac{(v_1-1)(v_1-v_2^2)v_5}{2v_2(v_2+1)}$, $X_4=v_1$, $X_5=v_3$. 
Then $k(v_1,v_2,v_3,v_4,v_5)=k(X_1,X_2,X_3,X_4,X_5)$ and 
\begin{align*}
g_1:X_1\mapsto -X_1,\ 
X_2\mapsto \tfrac{X_4}{X_2},\ 
X_3\mapsto \tfrac{(X_4-1)(X_4-X_1^2)}{X_3},\ 
X_4\mapsto X_4,\ X_5\mapsto X_5.
\end{align*}
The action of $g_1$ on $k(X_1,X_2,X_3,X_4)$ 
and that of $\tau$ on $k(X_1,X_2,X_3,X_4)$ 
in Definition \ref{defL01} (ii)
are exactly the same. 
Because $g_1$ acts on $k(X_5)$ trivially, we obtain that 
$k(X_1,X_2,X_3,X_4,X_5)^{\langle g_1\rangle}$ $=$ 
$k(X_1,X_2,X_3,X_4)^{\langle g_1\rangle}(X_5)$. 
Hence $k(V_{1345})^G$ is rational over $\varphi(L_k^{(1)})$.\\ 

{Case $3$: $G=G(2^7,242)$ which belongs to $\Phi_{37}$.}

$G=\langle g_1,g_2,g_3,g_4,g_5,g_6,g_7\rangle$ with relations 
$g_1^2=g_5$, $g_2^2=1$, $g_3^2=1$, $g_4^2=g_7$, $g_5^2=1$, $g_6^2=1$, 
$g_7^2=1$, $Z(G)=\langle g_6,g_7\rangle$, 
$[g_2,g_1]=g_4$, $[g_3,g_1]=g_7$, $[g_4,g_1]=g_6$, 
$[g_4,g_2]=g_7$, $[g_5,g_2]=g_6g_7$.

There exists a faithful representation 
$\rho : G\rightarrow GL(V_{242})\simeq GL_8(k)$ 
of dimension $8$ 
which is decomposable into two irreducible components 
$V_{242}\simeq U_4\oplus U_4^\prime$ of dimension $4$. 
By Theorem \ref{thHK}, $k(G)$ is rational over $k(V_{242})^G$. 
We will show that $k(V_{242})^G$ is rational over $\varphi(L_k^{(1)})$. 

The action of $G$ on $k(V_{242})=k(y_1,y_2,y_3,y_4,y_5,y_6,y_7,y_8)$ 
is given by 
\begin{align*}
g_1&:y_1\mapsto -y_2,y_2\mapsto y_1,y_3\mapsto -\z y_4,y_4\mapsto \z y_3,y_5\mapsto -y_7,y_6\mapsto y_8,y_7\mapsto y_5,y_8\mapsto y_6,\\
g_2&:y_1\mapsto y_3,y_2\mapsto y_4,y_3\mapsto y_1,y_4\mapsto y_2,y_5\mapsto y_6,y_6\mapsto y_5,y_7\mapsto y_8,y_8\mapsto y_7,\\
g_3&:y_1\mapsto -y_1,y_2\mapsto y_2,y_3\mapsto -y_3,y_4\mapsto y_4,y_5\mapsto y_5,y_6\mapsto y_6,y_7\mapsto y_7,y_8\mapsto y_8,\\
g_4&:y_1\mapsto -\z y_1,y_2\mapsto -\z y_2,y_3\mapsto \z y_3,y_4\mapsto \z y_4,y_5\mapsto -y_5,y_6\mapsto -y_6,y_7\mapsto y_7,y_8\mapsto y_8,\\
g_5&:y_1\mapsto -y_1,y_2\mapsto -y_2,y_3\mapsto y_3,y_4\mapsto y_4,y_5\mapsto -y_5,y_6\mapsto y_6,y_7\mapsto -y_7,y_8\mapsto y_8,\\
g_6&:y_1\mapsto y_1,y_2\mapsto y_2,y_3\mapsto y_3,y_4\mapsto y_4,y_5\mapsto -y_5,y_6\mapsto -y_6,y_7\mapsto -y_7,y_8\mapsto -y_8,\\
g_7&:y_1\mapsto -y_1,y_2\mapsto -y_2,y_3\mapsto -y_3,y_4\mapsto -y_4,y_5\mapsto y_5,y_6\mapsto y_6,y_7\mapsto y_7,y_8\mapsto y_8.
\end{align*}

Define $z_1=\frac{y_1}{y_4}$, $z_2=\frac{y_2}{y_4}$, 
$z_3=\frac{y_3}{y_4}$, $z_4=\frac{y_5}{y_8}$, $z_5=\frac{y_6}{y_8}$, 
$z_6=\frac{y_7}{y_8}$, $z_7=y_4$, $z_8=y_8$. 
Then $k(y_1,y_2,y_3,y_4,y_5,y_6,$ $y_7,y_8)
=k(z_1,z_2,z_3,z_4,z_5,z_6,z_7,z_8)$ and 

\begin{align*}
g_1&:z_1\mapsto -\tfrac{z_2}{\z z_3},z_2\mapsto \tfrac{z_1}{\z z_3},z_3\mapsto -\tfrac{1}{z_3},z_4\mapsto -\tfrac{z_6}{z_5},z_5\mapsto \tfrac{1}{z_5},z_6\mapsto \tfrac{z_4}{z_5},z_7\mapsto \z z_3z_7,z_8\mapsto z_5z_8,\\
g_2&:z_1\mapsto \tfrac{z_3}{z_2},z_2\mapsto \tfrac{1}{z_2},z_3\mapsto \tfrac{z_1}{z_2},z_4\mapsto \tfrac{z_5}{z_6},z_5\mapsto \tfrac{z_4}{z_6},z_6\mapsto \tfrac{1}{z_6},z_7\mapsto z_2z_7,z_8\mapsto z_6z_8,\\
g_3&:z_1\mapsto -z_1,z_2\mapsto z_2,z_3\mapsto -z_3,z_4\mapsto z_4,z_5\mapsto z_5,z_6\mapsto z_6,z_7\mapsto z_7,z_8\mapsto z_8,\\
g_4&:z_1\mapsto -z_1,z_2\mapsto -z_2,z_3\mapsto z_3,z_4\mapsto -z_4,z_5\mapsto -z_5,z_6\mapsto z_6,z_7\mapsto \z z_7,z_8\mapsto z_8,\\
g_5&:z_1\mapsto -z_1,z_2\mapsto -z_2,z_3\mapsto z_3,z_4\mapsto -z_4,z_5\mapsto z_5,z_6\mapsto -z_6,z_7\mapsto z_7,z_8\mapsto z_8,\\
g_6&:z_1\mapsto z_1,z_2\mapsto z_2,z_3\mapsto z_3,z_4\mapsto z_4,z_5\mapsto z_5,z_6\mapsto z_6,z_7\mapsto z_7,z_8\mapsto -z_8,\\
g_7&:z_1\mapsto z_1,z_2\mapsto z_2,z_3\mapsto z_3,z_4\mapsto z_4,z_5\mapsto z_5,z_6\mapsto z_6,z_7\mapsto -z_7,z_8\mapsto z_8.
\end{align*}

Apply Theorem \ref{thAHK} twice to $k(z_1,z_2,z_3,z_4,z_5,z_6)(z_7,z_8)$, 
$k(z_1,z_2,z_3,z_4,z_5,z_6,z_7,z_8)^G$ is rational over 
$k(z_1,z_2,z_3,z_4,z_5,z_6)^G$. 
We find that 
$k(z_1,z_2,z_3,z_4,z_5,z_6)^G
=k(z_1,z_2,z_3,z_4,z_5,z_6)^{\langle g_1,g_2,g_3,g_4,g_5\rangle}$ 
because $Z(G)=\langle g_6, g_7\rangle$ acts on $k(z_1,z_2,z_3,z_4,z_5,z_6)$ trivially. 
Hence it suffices to show that 
$k(z_1,z_2,z_3,z_4,z_5,z_6)^{\langle g_1,g_2,g_3,g_4,g_5\rangle}$ 
is rational over $\varphi(L_k^{(1)})$. 

Define $u_1=\frac{z_1z_2^2z_6}{z_3^3z_5}, 
u_2=\frac{z_3}{z_1^3z_4}, 
u_3=-\frac{z_3^2z_5}{\z z_2^3z_6}, 
u_4=\frac{z_1z_3}{z_2}, 
u_5=\frac{z_1z_2+z_3}{z_1z_2-z_3}, 
u_6=\frac{z_4+z_5z_6}{z_4-z_5z_6}$.
Note that $\frac{u_5+1}{u_5-1}=\frac{z_1z_2}{z_3}$ and 
$\frac{u_6+1}{u_6-1}=\frac{z_4}{z_5z_6}$.
By evaluating the determinant of the matrix $M$ of exponents as in Case 1 
(see the equation (\ref{eqdet})), we have $\det M=-8$,
$k(z_1,z_2,z_3,z_4,z_5,z_6)^{\langle g_3,g_4,g_5\rangle}
=k(u_1,u_2,u_3,u_4,u_5,u_6)$ 
and 
\begin{align}
g_1&:u_1\mapsto \tfrac{-1}{u_1u_2u_3},u_2\mapsto u_3,u_3\mapsto u_2,u_4\mapsto \tfrac{1}{u_4},u_5\mapsto \tfrac{1}{u_5},
u_6\mapsto \tfrac{-1}{u_6},\label{act242}\\
g_2&:u_1\mapsto u_2,u_2\mapsto u_1,u_3\mapsto \tfrac{-1}{u_1u_2u_3},u_4\mapsto u_4,u_5\mapsto -u_5,u_6\mapsto -u_6.\nonumber
\end{align}

By applying Lemma \ref{lemHKY}, we obtain 
$k(u_1,u_2,u_3,u_4,u_5,u_6)^{\langle g_2\rangle}
=k(v_1,v_2,v_3,v_4,v_5,v_6)$ where 
\begin{align*}
v_1=\tfrac{u_1u_2}{u_1+u_2},\ 
v_2=\tfrac{u_1u_2u_3+\tfrac{-1}{u_3}}{u_1+u_2},\ 
v_3=\tfrac{u_1u_2u_3-\tfrac{-1}{u_3}}{u_1-u_2},\ 
v_4=u_4,\ 
v_5=\tfrac{u_5}{u_1-u_2},\ 
v_6=\tfrac{u_6}{u_1-u_2}.
\end{align*}
The action of $g_1$ on $k(v_1,v_2,v_3,v_4,v_5,v_6)$ are given by 
\begin{align*}
g_1:v_1\mapsto \tfrac{v_2^2-v_3^2}{4v_1v_2(v_3^2+1)},
v_2\mapsto -\tfrac{1}{v_2},
v_3\mapsto -\tfrac{1}{v_3},
v_4\mapsto \tfrac{1}{v_4},
v_5\mapsto \tfrac{v_2^2-v_3^2}{4(v_2^2+1)v_3v_5},
v_6\mapsto -\tfrac{v_2^2-v_3^2}{4(v_2^2+1)v_3v_6}.
\end{align*}

Define 
$X_1=\frac{v_3+\z}{v_3-\z}$,
$X_2=\frac{v_3v_6}{v_1v_2}\bigl(\frac{v_2+\z}{v_3-\z}\bigr)$,
$X_3=\frac{4v_1v_2}{\eta(v_2-\z)}\bigl(\frac{v_3+\z}{v_3-\z}\bigr)$,
$X_4=\bigl(\frac{v_2+\z}{v_2-\z}\bigr)\bigl(\frac{v_3+\z}{v_3-\z}\bigr)$,
$X_5=\bigl(\frac{v_2+\z}{v_2-\z}\bigr)\bigl(\frac{v_4+1}{v_4-1}\bigr)$,
$X_6=\bigl(\frac{v_2+\z}{v_2-\z}\bigr)\bigl(\frac{v_5+\z v_6}{v_5-\z v_6}\bigr)$.
Then $k(v_1,v_2,v_3,v_4,v_5,v_6)=k(X_1,X_2,X_3,X_4,X_5,X_6)$ and 
\begin{align*}
g_1:X_1\mapsto -X_1,\ 
X_2\mapsto \tfrac{X_4}{X_2},\ 
X_3\mapsto \tfrac{(X_4-1)(X_4-X_1^2)}{X_3},\ 
X_4\mapsto X_4,\ X_5\mapsto X_5,\ X_6\mapsto X_6.
\end{align*}
The action of $g_1$ on $k(X_1,X_2,X_3,X_4)$ 
and that of $\tau$ on $k(x_1,x_2,x_3,x_4)$ 
in \ref{defL01} (ii)
are exactly the same. 
We also have  
$k(X_1,X_2,X_3,X_4,X_5,X_6)^{\langle g_1\rangle}$ $=$ 
$k(X_1,X_2,X_3,X_4)^{\langle g_1\rangle}(X_5,X_6)$ 
because $g_1$ acts on $k(X_5,X_6)$ trivially. 
Hence $k(V_{242})^{G}$ is rational over $\varphi(L_k^{(1)})$. \\

{Case $4$: $G=G(2^7,36)$ which belongs to $\Phi_{39}$.}

$G=\langle g_1,g_2,g_3,g_4,g_5,g_6,g_7\rangle$ with relations 
$g_1^2=g_4$, $g_2^2=g_5$, $g_3^2=1$, $g_4^2=1$, $g_5^2=1$, $g_6^2=1$, 
$g_7^2=1$, $Z(G)=\langle g_6,g_7\rangle$, 
$[g_2,g_1]=g_3$, $[g_3,g_1]=g_6$, $[g_3,g_2]=g_7$, 
$[g_4,g_2]=g_6$, $[g_5,g_1]=g_7$.

There exists a faithful representation 
$\rho : G\rightarrow GL(V_{36})\simeq GL_8(k)$ 
of dimension $8$ 
which is decomposable into two irreducible components 
$V_{36}\simeq U_4\oplus U_4^\prime$ of dimension $4$.
By Theorem \ref{thHK}, $k(G)$ is rational over $k(V_{36})^G$. 
We will show that $k(V_{36})^G$ is rational over $\varphi(L_k^{(1)})$. 

The action of $G$ on $k(V_{36})=k(y_1,y_2,y_3,y_4,y_5,y_6,y_7,y_8)$ 
is given by 
\begin{align*}
g_1&:y_1\mapsto -y_3,y_2\mapsto y_4,y_3\mapsto y_1,y_4\mapsto y_2,y_5\mapsto y_6,y_6\mapsto y_5,y_7\mapsto y_8,y_8\mapsto y_7,\\
g_2&:y_1\mapsto y_2,y_2\mapsto y_1,y_3\mapsto y_4,y_4\mapsto y_3,y_5\mapsto -y_7,y_6\mapsto y_8,y_7\mapsto y_5,y_8\mapsto y_6,\\
g_3&:y_1\mapsto -y_1,y_2\mapsto -y_2,y_3\mapsto y_3,y_4\mapsto y_4,y_5\mapsto -y_5,y_6\mapsto -y_6,y_7\mapsto y_7,y_8\mapsto y_8,\\
g_4&:y_1\mapsto -y_1,y_2\mapsto y_2,y_3\mapsto -y_3,y_4\mapsto y_4,y_5\mapsto y_5,y_6\mapsto y_6,y_7\mapsto y_7,y_8\mapsto y_8,\\
g_5&:y_1\mapsto y_1,y_2\mapsto y_2,y_3\mapsto y_3,y_4\mapsto y_4,y_5\mapsto -y_5,y_6\mapsto y_6,y_7\mapsto -y_7,y_8\mapsto y_8,\\
g_6&:y_1\mapsto -y_1,y_2\mapsto -y_2,y_3\mapsto -y_3,y_4\mapsto -y_4,y_5\mapsto y_5,y_6\mapsto y_6,y_7\mapsto y_7,y_8\mapsto y_8,\\
g_7&:y_1\mapsto y_1,y_2\mapsto y_2,y_3\mapsto y_3,y_4\mapsto y_4,y_5\mapsto -y_5,y_6\mapsto -y_6,y_7\mapsto -y_7,y_8\mapsto -y_8.
\end{align*}

Define $z_1=\frac{y_1}{y_4}$, $z_2=\frac{y_2}{y_4}$, 
$z_3=\frac{y_3}{y_4}$, $z_4=\frac{y_5}{y_8}$, $z_5=\frac{y_6}{y_8}$, 
$z_6=\frac{y_7}{y_8}$, $z_7=y_4$, $z_8=y_8$. 
Then $k(y_1,y_2,y_3,y_4,y_5,y_6,$ $y_7,y_8)
=k(z_1,z_2,z_3,z_4,z_5,z_6,z_7,z_8)$ and 

\begin{align*}
g_1&:z_1\mapsto -\tfrac{z_3}{z_2},z_2\mapsto \tfrac{1}{z_2},z_3\mapsto \tfrac{z_1}{z_2},z_4\mapsto \tfrac{z_5}{z_6},z_5\mapsto \tfrac{z_4}{z_6},z_6\mapsto \tfrac{1}{z_6},z_7\mapsto z_2z_7,z_8\mapsto z_6z_8,\\
g_2&:z_1\mapsto \tfrac{z_2}{z_3},z_2\mapsto \tfrac{z_1}{z_3},z_3\mapsto \tfrac{1}{z_3},z_4\mapsto -\tfrac{z_6}{z_5},z_5\mapsto \tfrac{1}{z_5},z_6\mapsto \tfrac{z_4}{z_5},z_7\mapsto z_3z_7,z_8\mapsto z_5z_8,\\
g_3&:z_1\mapsto -z_1,z_2\mapsto -z_2,z_3\mapsto z_3,z_4\mapsto -z_4,z_5\mapsto -z_5,z_6\mapsto z_6,z_7\mapsto z_7,z_8\mapsto z_8,\\
g_4&:z_1\mapsto -z_1,z_2\mapsto z_2,z_3\mapsto -z_3,z_4\mapsto z_4,z_5\mapsto z_5,z_6\mapsto z_6,z_7\mapsto z_7,z_8\mapsto z_8,\\
g_5&:z_1\mapsto z_1,z_2\mapsto z_2,z_3\mapsto z_3,z_4\mapsto -z_4,z_5\mapsto z_5,z_6\mapsto -z_6,z_7\mapsto z_7,z_8\mapsto z_8,\\
g_6&:z_1\mapsto z_1,z_2\mapsto z_2,z_3\mapsto z_3,z_4\mapsto z_4,z_5\mapsto z_5,z_6\mapsto z_6,z_7\mapsto -z_7,z_8\mapsto z_8,\\
g_7&:z_1\mapsto z_1,z_2\mapsto z_2,z_3\mapsto z_3,z_4\mapsto z_4,z_5\mapsto z_5,z_6\mapsto z_6,z_7\mapsto z_7,z_8\mapsto -z_8.
\end{align*}

By applying Theorem \ref{thAHK} twice to $k(z_1,z_2,z_3,z_4,z_5,z_6)(z_7,z_8)$, 
we see that 
the invariant field 
$k(z_1,z_2,z_3,z_4,z_5,z_6,z_7,z_8)^G$ is rational over 
$k(z_1,z_2,z_3,z_4,z_5,z_6)^G$. 
Because $Z(G)=\langle g_6, g_7\rangle$ acts on 
$k(z_1,z_2,z_3,z_4,z_5,z_6)$ trivially, 
$k(z_1,z_2,z_3,z_4,z_5,z_6)^G
=k(z_1,z_2,z_3,z_4,z_5,z_6)^{\langle g_1,g_2,g_3,g_4,g_5\rangle}$. 
Hence it suffices to show that 
$k(z_1,z_2,z_3,z_4,z_5,z_6)^{\langle g_1,g_2,g_3,g_4,g_5\rangle}$ 
is rational over $\varphi(L_k^{(1)})$. 

Define 
$u_1=z_2z_5,
u_2=\frac{z_1}{z_3z_5},
u_3=-\frac{z_3z_6}{z_1z_4},
u_4=\bigl(\frac{z_2+z_1z_3}{z_2-z_1z_3}\bigr)
\frac{z_4z_6}{z_5},
u_5=\frac{z_4z_6}{z_5},
u_6=\frac{z_1+z_2z_3}{z_1-z_2z_3}$.
Note that 
$\frac{u_4^\prime+1}{u_4^\prime-1}=\frac{z_2}{z_1z_3}$ 
where $u_4^\prime=\frac{z_2+z_1z_3}{z_2-z_1z_3}$ 
and 
$\frac{u_6+1}{u_6-1}=\frac{z_1}{z_2z_3}$.
By evaluating the determinant of the matrix $M$ 
of exponents as in Case 1 
(see the equation (\ref{eqdet})), we have $\det M=-8$, 
$k(z_1,z_2,z_3,z_4,z_5,z_6)^{\langle g_3,g_4,g_5\rangle}
=k(u_1,u_2,u_3,u_4,u_5,u_6)$ 
and 
\begin{align*}
g_1&:u_1\mapsto \tfrac{-1}{u_1u_2u_3},u_2\mapsto u_3,u_3\mapsto u_2,u_4\mapsto \tfrac{1}{u_4},u_5\mapsto \tfrac{1}{u_5},
u_6\mapsto \tfrac{-1}{u_6},\\
g_2&:u_1\mapsto u_2,u_2\mapsto u_1,u_3\mapsto \tfrac{-1}{u_1u_2u_3},u_4\mapsto u_4,u_5\mapsto -u_5,u_6\mapsto -u_6.
\end{align*}

This action of $\langle g_1,g_2\rangle$ on $k(u_1,u_2,u_3,u_4,u_5,u_6)$ 
is exactly the same to the equation 
(\ref{act242}) in Case $3$: $G=G(2^7,242)$. 
Hence $k(V_{36})^{G(36)}\simeq k(V_{242})^{G(242)}$ and 
$k(V_{36})^{G(36)}$ is rational over $\varphi(L_k^{(1)})$.\\

{Case $5$: $G=G(2^7,1924)$ which belongs to $\Phi_{43}$.}

$G=\langle g_1,g_2,g_3,g_4,g_5,g_6,g_7\rangle$ with relations 
$g_1^2=1$, $g_2^2=1$, $g_3^2=1$, $g_4^2=1$, $g_5^2=1$, $g_6^2=g_7$, 
$g_7^2=1$, $Z(G)=\langle g_5,g_7\rangle$, 
$[g_2,g_1]=g_5$, $[g_3,g_1]=g_6$, $[g_3,g_2]=g_5g_7$, 
$[g_4,g_1]=g_5$, $[g_6,g_1]=g_7$, $[g_6,g_3]=g_7$.

There exists a faithful representation 
$\rho : G\rightarrow GL(V_{1924})\simeq GL_8(k)$ 
of dimension $8$ 
which is decomposable into two irreducible components 
$V_{1924}\simeq U_4\oplus U_4^\prime$ of dimension $4$.
By Theorem \ref{thHK}, $k(G)$ is rational over $k(V_{1924})^G$. 
We will show that $k(V_{1924})^G$ is rational over $\varphi(L_k^{(1)})$. 

The action of $G$ on $k(V_{1924})=k(y_1,y_2,y_3,y_4,y_5,y_6,y_7,y_8)$ 
is given by 
\begin{align*}
g_1&:y_1\mapsto y_3,y_2\mapsto y_4,y_3\mapsto y_1,y_4\mapsto y_2,y_5\mapsto y_8,y_6\mapsto -\z y_7,y_7\mapsto \z y_6,y_8\mapsto y_5,\\
g_2&:y_1\mapsto y_2,y_2\mapsto y_1,y_3\mapsto -y_4,y_4\mapsto -y_3,y_5\mapsto -y_5,y_6\mapsto y_6,y_7\mapsto y_7,y_8\mapsto -y_8,\\
g_3&:y_1\mapsto -y_1,y_2\mapsto y_2,y_3\mapsto -y_3,y_4\mapsto y_4,y_5\mapsto y_7,y_6\mapsto y_8,y_7\mapsto y_5,y_8\mapsto y_6,\\
g_4&:y_1\mapsto -y_1,y_2\mapsto -y_2,y_3\mapsto y_3,y_4\mapsto y_4,y_5\mapsto y_5,y_6\mapsto y_6,y_7\mapsto y_7,y_8\mapsto y_8,\\
g_5&:y_1\mapsto -y_1,y_2\mapsto -y_2,y_3\mapsto -y_3,y_4\mapsto -y_4,y_5\mapsto y_5,y_6\mapsto y_6,y_7\mapsto y_7,y_8\mapsto y_8,\\
g_6&:y_1\mapsto y_1,y_2\mapsto y_2,y_3\mapsto y_3,y_4\mapsto y_4,y_5\mapsto -\z y_5,y_6\mapsto -\z y_6,y_7\mapsto \z y_7,y_8\mapsto \z y_8,\\
g_7&:y_1\mapsto y_1,y_2\mapsto y_2,y_3\mapsto y_3,y_4\mapsto y_4,y_5\mapsto -y_5,y_6\mapsto -y_6,y_7\mapsto -y_7,y_8\mapsto -y_8.
\end{align*}

Define $z_1=\frac{y_1}{y_4}$, $z_2=\frac{y_2}{y_4}$, 
$z_3=\frac{y_3}{y_4}$, $z_4=\frac{y_5}{y_8}$, $z_5=\frac{y_6}{y_8}$, 
$z_6=\frac{y_7}{y_8}$, $z_7=y_4$, $z_8=y_8$. 
Then $k(y_1,y_2,y_3,y_4,y_5,y_6,$ $y_7,y_8)
=k(z_1,z_2,z_3,z_4,z_5,z_6,z_7,z_8)$ and 

\begin{align*}
g_1&:z_1\mapsto \tfrac{z_3}{z_2},z_2\mapsto \tfrac{1}{z_2},z_3\mapsto \tfrac{z_1}{z_2},z_4\mapsto \tfrac{1}{z_4},z_5\mapsto -\tfrac{\z z_6}{z_4},z_6\mapsto \tfrac{\z z_5}{z_4},z_7\mapsto z_7,z_8\mapsto z_8,\\
g_2&:z_1\mapsto -\tfrac{z_2}{z_3},z_2\mapsto -\tfrac{z_1}{z_3},z_3\mapsto \tfrac{1}{z_3},z_4\mapsto z_4,z_5\mapsto -z_5,z_6\mapsto -z_6,z_7\mapsto z_7,z_8\mapsto z_8,\\
g_3&:z_1\mapsto -z_1,z_2\mapsto z_2,z_3\mapsto -z_3,z_4\mapsto \tfrac{z_6}{z_5},z_5\mapsto \tfrac{1}{z_5},z_6\mapsto \tfrac{z_4}{z_5},z_7\mapsto z_7,z_8\mapsto z_8,\\
g_4&:z_1\mapsto -z_1,z_2\mapsto -z_2,z_3\mapsto z_3,z_4\mapsto z_4,z_5\mapsto z_5,z_6\mapsto z_6,z_7\mapsto z_7,z_8\mapsto z_8,\\
g_5&:z_1\mapsto z_1,z_2\mapsto z_2,z_3\mapsto z_3,z_4\mapsto z_4,z_5\mapsto z_5,z_6\mapsto z_6,z_7\mapsto z_7,z_8\mapsto z_8,\\
g_6&:z_1\mapsto z_1,z_2\mapsto z_2,z_3\mapsto z_3,z_4\mapsto -z_4,z_5\mapsto -z_5,z_6\mapsto z_6,z_7\mapsto z_7,z_8\mapsto z_8,\\
g_7&:z_1\mapsto z_1,z_2\mapsto z_2,z_3\mapsto z_3,z_4\mapsto z_4,z_5\mapsto z_5,z_6\mapsto z_6,z_7\mapsto z_7,z_8\mapsto z_8.
\end{align*}

By applying Theorem \ref{thAHK} twice to $k(z_1,z_2,z_3,z_4,z_5,z_6)(z_7,z_8)$, 
$k(z_1,z_2,z_3,z_4,z_5,z_6,z_7,z_8)^G$ is rational over 
$k(z_1,z_2,z_3,z_4,z_5,z_6)^G$. 
Because $Z(G)=\langle g_5, g_7\rangle$ acts on $k(z_1,z_2,z_3,z_4,z_5,z_6)$ trivially, 
$k(z_1,z_2,z_3,z_4,z_5,z_6)^G
=k(z_1,z_2,z_3,z_4,z_5,z_6)^{\langle g_1,g_2,g_3,g_4,g_6\rangle}$.
Hence it suffices to show that 
$k(z_1,z_2,z_3,z_4,z_5,z_6)^{\langle g_1,g_2,g_3,g_4,g_6\rangle}$ 
is rational over $\varphi(L_k^{(1)})$. 

Define $
u_1=\frac{z_1}{z_2}, 
u_2=\frac{z_1z_2}{z_3}, 
u_3=z_3, 
u_4=\frac{z_4}{z_5}, 
u_5=z_5z_4, 
u_6=\frac{z_6z_4}{z_5}$.
By evaluating the determinant of the matrix $M$ 
of exponents as in Case 1 
(see the equation (\ref{eqdet})), we have $\det M=4$, 
$k(z_1,z_2,z_3,z_4,z_5,z_6)^{\langle g_4,g_6\rangle}
=k(u_1,u_2,u_3,u_4,u_5,u_6)$ 
and 
\begin{align*}
g_1&:u_1\mapsto u_3,u_2\mapsto \tfrac{1}{u_2},u_3\mapsto u_1,u_4\mapsto -\tfrac{u_4}{\z u_6},
u_5\mapsto -\tfrac{\z u_6}{u_4^2u_5},u_6\mapsto -\tfrac{1}{u_6},\\
g_2&:u_1\mapsto \tfrac{1}{u_1},u_2\mapsto u_2,u_3\mapsto \tfrac{1}{u_3},u_4\mapsto -u_4,u_5\mapsto -u_5,u_6\mapsto u_6,\\
g_3&:u_1\mapsto -u_1,u_2\mapsto u_2,u_3\mapsto -u_3,u_4\mapsto \tfrac{u_6}{u_4},u_5\mapsto \tfrac{u_6}{u_5},u_6\mapsto u_6.
\end{align*}

Define 
$v_1=\bigl(\frac{u_1+1}{u_1-1}\bigr)\bigl(\frac{u_3+1}{u_3-1}\bigr)$,
$v_2=u_2$,
$v_3=\bigl(\frac{u_1-1}{u_1+1}\bigr)\bigl(\frac{u_3+1}{u_3-1}\bigr)$,
$v_4=u_4\bigl(\frac{u_1-1}{u_1+1}\bigr)$,
$v_5=-\frac{\z u_6}{u_4}\bigl(\frac{u_3+1}{u_3-1}\bigr)$,
$v_6=-\frac{\z u_6}{u_4u_5}$.
Then 
$k(u_1,u_2,u_3,u_4,u_5,u_6)^{\langle g_2\rangle}
=k(v_1,v_2,v_3,v_4,v_5,v_6)$ and 
\begin{align*}
g_1&:v_1\mapsto v_1,v_2\mapsto \tfrac{1}{v_2},v_3\mapsto \tfrac{1}{v_3},v_4\mapsto \tfrac{1}{v_5},v_5\mapsto \tfrac{1}{v_4},v_6\mapsto \tfrac{1}{v_6},\\
g_3&:v_1\mapsto \tfrac{1}{v_1},v_2\mapsto v_2,v_3\mapsto \tfrac{1}{v_3},v_4\mapsto -\tfrac{v_5}{\z v_3},
v_5\mapsto -\tfrac{\z v_4}{v_3},v_6\mapsto -\tfrac{1}{v_6}.
\end{align*}

Define 
$w_1=\bigl(\frac{v_1+1}{v_1-1}\bigr)\bigl(\frac{v_2+1}{v_2-1}\bigr)\bigl(\frac{v_3+1}{v_3-1}\bigr)$,
$w_2=-\bigl(\frac{v_2+1}{v_2-1}\bigr)\big/\bigl(\frac{v_6+1}{v_6-1}\bigr)$,
$w_3=\bigl(\frac{v_2+1}{v_2-1}\bigr)\bigl(\frac{v_3+1}{v_3-1}\bigr)$,
$w_4=v_4+\frac{1}{v_5}$,
$w_5=\bigl(\frac{v_2+1}{v_2-1}\bigr)\bigl(v_4-\frac{1}{v_5}\bigr)$,
$w_6=\bigl(\frac{v_2+1}{v_2-1}\bigr)\bigl(\frac{v_6+1}{v_6-1}\bigr)$.
Then $k(v_1,v_2,v_3,v_4,v_5,v_6)^{\langle g_1\rangle}
=k(w_1,w_2,w_3,w_4,w_5,w_6)$ and 
\begin{align*}
g_3:\ &w_1\mapsto w_1,w_2\mapsto w_6,w_3\mapsto -w_3,
w_4\mapsto -\tfrac{4w_2w_6(w_3^2w_4-2w_3w_5-w_2w_4w_6)}{\z (w_3^2+w_2w_6)(w_5^2+w_2w_4^2w_6)},\\
&w_5\mapsto -\tfrac{4w_2w_6(w_3^2w_5+2w_2w_3w_4w_6-w_2w_5w_6)}{\z (w_3^2+w_2w_6)(w_5^2+w_2w_4^2w_6)},
w_6\mapsto w_2.
\end{align*}

Define 
\begin{align*}
X_1&=-\tfrac{w_3w_5+w_2w_4w_6}{w_3(w_3w_4-w_5)}, 
X_2=\tfrac{\z (w_3w_5+w_2w_4w_6)}{(w_3w_4-w_5)w_6}, 
X_3=-\tfrac{2\eta (w_3^2w_4-2w_3w_5-w_2w_4w_6)(w_3w_5+w_2w_4w_6)}
{w_3w_4w_6(w_3w_4-w_5)^2},\\
X_4&=-\tfrac{(w_3w_5+w_2w_4w_6)^2}{w_2w_6(w_3w_4-w_5)^2}, 
X_5=w_1, 
X_6=-\tfrac{w_2(w_3w_4-w_5)w_6}{w_3w_5+w_2w_4w_6}.
\end{align*}
It follows from 
$w_1=X_5$, 
$w_2=\z X_2 X_6$, 
$w_3=-\frac{X_4 X_6}{X_1}$, 
$w_4=\frac{2 \z\eta (X_1+1) X_2}{X_3}$, 
$w_5=\frac{2 \z\eta (X_1+1) (X_1 - X_4)X_2X_6}{(X_1-1) X_3}$, 
$w_6=\frac{\z X_4 X_6}{X_2}$
that $k(w_1,w_2,w_3,w_4,w_5,w_6)=k(X_1,X_2,X_3,X_4,X_5,X_6)$ and 
\begin{align*}
g_3:X_1\mapsto -X_1,\ 
X_2\mapsto \tfrac{X_4}{X_2},\ 
X_3\mapsto \tfrac{(X_4-1)(X_4-X_1^2)}{X_3},\ 
X_4\mapsto X_4,\ X_5\mapsto X_5,\ X_6\mapsto X_6.
\end{align*}
The action of $g_3$ on $k(X_1,X_2,X_3,X_4)$ 
and  that of $\tau$ on $k(x_1,x_2,x_3,x_4)$ 
in Definition \ref{defL01} (ii)
are exactly the same. 
Because $g_3$ acts on $k(X_5,X_6)$ trivially, 
we have $k(X_1,X_2,X_3,X_4,X_5,X_6)^{\langle g_3\rangle}
=k(X_1,X_2,X_3,X_4)^{\langle g_3\rangle}(X_5,X_6)$. 
Hence $k(V_{1924})^{G}$ is rational over $\varphi(L_k^{(1)})$. \\

{Case $6$: $G=G(2^7,417)$ which belongs to $\Phi_{58}$.}

$G=\langle g_1,g_2,g_3,g_4,g_5,g_6,g_7\rangle$ with relations 
$g_1^2=1$, $g_2^2=g_4$, $g_3^2=1$ ,$g_4^2=g_6$, $g_5^2=g_7$, $g_6^2=1$, 
$g_7^2=1$,
$Z(G)=\langle g_6,g_7\rangle$, 
$[g_2,g_1]=g_4$, $[g_3,g_1]=g_5$, $[g_3,g_2]=g_6$, 
$[g_4,g_1]=g_6$, $[g_5,g_1]=g_7$, $[g_5,g_3]=g_7$.

There exists a faithful representation 
$\rho : G\rightarrow GL(V_{417})\simeq GL_6(k)$ of dimension $6$
which is decomposable into two irreducible components 
$V_{417}\simeq U_4\oplus U_2$ of dimension $4$ and $2$ respectively.
By Theorem \ref{thHK}, $k(G)$ is rational over $k(V_{417})^G$. 
We will show that $k(V_{417})^G$ is rational over $\varphi(L_k^{(1)})$. 

The action of $G$ on $k(V_{417})=k(y_1,y_2,y_3,y_4,y_5,y_6)$ 
is given by 
\begin{align*}
g_1&:y_1\mapsto y_2,y_2\mapsto y_1,y_3\mapsto y_4,y_4\mapsto y_3,y_5\mapsto \eta^3y_6,y_6\mapsto -\eta y_5,\\
g_2&:y_1\mapsto \eta^3y_1,y_2\mapsto \eta y_2,y_3\mapsto -\eta^3y_3,y_4\mapsto -\eta y_4,y_5\mapsto y_5,y_6\mapsto y_6,\\
g_3&:y_1\mapsto y_3,y_2\mapsto -\z y_4,y_3\mapsto y_1,y_4\mapsto \z y_2,y_5\mapsto y_6,y_6\mapsto y_5,\\
g_4&:y_1\mapsto -\z y_1,y_2\mapsto \z y_2,y_3\mapsto -\z y_3,y_4\mapsto \z y_4,y_5\mapsto y_5,y_6\mapsto y_6,\\
g_5&:y_1\mapsto -\z y_1,y_2\mapsto \z y_2,y_3\mapsto \z y_3,y_4\mapsto -\z y_4,y_5\mapsto -\z y_5,y_6\mapsto \z y_6,\\
g_6&:y_1\mapsto -y_1,y_2\mapsto -y_2,y_3\mapsto -y_3,y_4\mapsto -y_4,y_5\mapsto y_5,y_6\mapsto y_6,\\
g_7&:y_1\mapsto -y_1,y_2\mapsto -y_2,y_3\mapsto -y_3,y_4\mapsto -y_4,y_5\mapsto -y_5,y_6\mapsto -y_6.
\end{align*}

Define $z_1=\frac{y_1}{y_4}$, $z_2=\frac{y_2}{y_4}$, 
$z_3=\frac{y_3}{y_4}$, $z_4=\frac{y_5}{y_6}$, $z_5=y_4$, $z_6=y_6$. 
Then $k(y_1,y_2,y_3,y_4,y_5,y_6)=k(z_1,z_2,z_3,z_4,z_5,z_6)$ and 
\begin{align*}
g_1&:z_1\mapsto \tfrac{z_2}{z_3},z_2\mapsto \tfrac{z_1}{z_3},z_3\mapsto \tfrac{1}{z_3},z_4\mapsto -\tfrac{\z}{z_4},z_5\mapsto z_3z_5,z_6\mapsto -\eta z_4z_6,\\
g_2&:z_1\mapsto -\z z_1,z_2\mapsto -z_2,z_3\mapsto \z z_3,z_4\mapsto z_4,z_5\mapsto -\eta z_5,
z_6\mapsto z_6,\\
g_3&:z_1\mapsto \tfrac{z_3}{\z z_2},z_2\mapsto -\tfrac{1}{z_2},z_3\mapsto \tfrac{z_1}{\z z_2},
z_4\mapsto \tfrac{1}{z_4},z_5\mapsto \z z_2z_5,z_6\mapsto z_4z_6,\\
g_4&:z_1\mapsto -z_1,z_2\mapsto z_2,z_3\mapsto -z_3,z_4\mapsto z_4,z_5\mapsto \z z_5,z_6\mapsto z_6,\\
g_5&:z_1\mapsto z_1,z_2\mapsto -z_2,z_3\mapsto -z_3,z_4\mapsto -z_4,z_5\mapsto -\z z_5,z_6\mapsto \z z_6,\\
g_6&:z_1\mapsto z_1,z_2\mapsto z_2,z_3\mapsto z_3,z_4\mapsto z_4,z_5\mapsto -z_5,z_6\mapsto z_6,\\
g_7&:z_1\mapsto z_1,z_2\mapsto z_2,z_3\mapsto z_3,z_4\mapsto z_4,z_5\mapsto -z_5,z_6\mapsto -z_6.
\end{align*}

Apply Theorem \ref{thAHK} twice to $k(z_1,z_2,z_3,z_4)(z_5,z_6)$, 
the invariant field 
$k(z_1,z_2,z_3,z_4,z_5,z_6)^G$ is rational over 
$k(z_1,z_2,z_3,z_4)^G$. 
We find that 
$k(z_1,z_2,z_3,z_4)^G
=k(z_1,z_2,z_3,z_4)^{\langle g_1,g_2,g_3,g_4,g_5\rangle}$ 
because $Z(G)=\langle g_6, g_7\rangle$ acts on $k(z_1,z_2,z_3,z_4)$ trivially. 
It suffices to show that 
$k(z_1,z_2,z_3,z_4)^{\langle g_1,g_2,g_3,g_4,g_5\rangle}$ 
is rational over  $\varphi(L_k^{(1)})$.

Define $u_1=\frac{z_1^2z_3^2}{z_2^2}$, 
$u_2=\frac{z_2^2z_4}{z_1z_3}$, 
$u_3=\frac{\z z_1^2}{z_2z_4}$, $u_4=\frac{z_1}{z_2z_3}$.
By evaluating the determinant of the matrix $M$ 
of exponents as in Case 1 
(see the equation (\ref{eqdet})), we have $\det M=-8$, 
$k(z_1,z_2,z_3,z_4)^{\langle g_2,g_4,g_5\rangle}=k(u_1,u_2,u_3,u_4)$ 
and 
\begin{align*}
g_1&:u_1\mapsto \tfrac{1}{u_1},u_2\mapsto u_3,u_3\mapsto u_2,u_4\mapsto \tfrac{1}{u_4},\\
g_3&:u_1\mapsto u_1,u_2\mapsto -\tfrac{1}{u_1u_2},u_3\mapsto -\tfrac{u_1}{u_3},
u_4\mapsto -\tfrac{1}{u_4}.
\end{align*}
Hence the action of $\langle g_1,g_3\rangle$ on 
$k(u_1,u_2,u_3,u_4)$ 
and that of 
$\langle g_1,g_2\rangle$ on $k(u_1,u_2,u_3,u_4)$ as in $(\ref{act227})$ of 
Case $1$: $G=G(2^7,227)$ 
are exactly the same. 
Hence $k(V_{417})^{G(417)}\simeq k(V_{227})^{G(227)}$ and 
$k(V_{417})^{G(417)}$ is rational over $\varphi(L_k^{(1)})$. \\

{Case $7$: $G=G(2^7,446)$ which belongs to $\Phi_{60}$.}

$G=\langle g_1,g_2,g_3,g_4,g_5,g_6,g_7\rangle$ with relations 
$g_1^2=1$, $g_2^2=g_4$, $g_3^2=g_5$, $g_4^2=g_6$, 
$g_5^2=g_7$, $g_6^2=1$, $g_7^2=1$,
$Z(G)=\langle g_6,g_7\rangle$, 
$[g_2,g_1]=g_4$, $[g_3,g_1]=g_5$, $[g_3,g_2]=g_6$, 
$[g_4,g_1]=g_6$, $[g_5,g_1]=g_7$.

There exists a faithful representation 
$\rho : G\rightarrow GL(V_{446})\simeq GL_6(k)$ 
of dimension $6$
which is decomposable into two irreducible components 
$V_{446}\simeq U_4\oplus U_2$ of dimension $4$ and $2$ respectively.
By Theorem \ref{thHK}, $k(G)$ is rational over $k(V_{446})^G$. 
We will show that $k(V_{446})^G$ is rational over $\varphi(L_k^{(1)})$. 

The action of $G$ on $k(V_{446})=k(y_1,y_2,y_3,y_4,y_5,y_6)$ 
is given by 
\begin{align*}
g_1&:y_1\mapsto y_3,y_2\mapsto y_4,y_3\mapsto y_1,y_4\mapsto y_2,y_5\mapsto y_6,y_6\mapsto y_5,\\
g_2&:y_1\mapsto -\z y_2,y_2\mapsto y_1,y_3\mapsto -y_4,y_4\mapsto -\z y_3,y_5\mapsto y_5,y_6\mapsto y_6,\\
g_3&:y_1\mapsto -y_1,y_2\mapsto y_2,y_3\mapsto -y_3,y_4\mapsto y_4,y_5\mapsto w^3y_5,y_6\mapsto wy_6,\\
g_4&:y_1\mapsto -\z y_1,y_2\mapsto -\z y_2,y_3\mapsto \z y_3,y_4\mapsto \z y_4,y_5\mapsto y_5,y_6\mapsto y_6,\\
g_5&:y_1\mapsto y_1,y_2\mapsto y_2,y_3\mapsto y_3,y_4\mapsto y_4,y_5\mapsto -\z y_5,y_6\mapsto \z y_6,\\
g_6&:y_1\mapsto -y_1,y_2\mapsto -y_2,y_3\mapsto -y_3,y_4\mapsto -y_4,y_5\mapsto y_5,y_6\mapsto y_6,\\
g_7&:y_1\mapsto y_1,y_2\mapsto y_2,y_3\mapsto y_3,y_4\mapsto y_4,y_5\mapsto -y_5,y_6\mapsto -y_6.
\end{align*}

Define $z_1=\frac{y_1}{y_4}$, $z_2=\frac{y_2}{y_4}$, 
$z_3=\frac{y_3}{y_4}$, $z_4=\frac{y_5}{y_6}$, $z_5=y_4$, $z_6=y_6$. 
Then $k(y_1,y_2,y_3,y_4,y_5,y_6)=k(z_1,z_2,z_3,z_4,z_5,z_6)$ and 
\begin{align*}
g_1&:z_1\mapsto \tfrac{z_3}{z_2},z_2\mapsto \tfrac{1}{z_2},z_3\mapsto \tfrac{z_1}{z_2},
z_4\mapsto \tfrac{1}{z_4},z_5\mapsto z_2z_5,z_6\mapsto z_4z_6,\\
g_2&:z_1\mapsto \tfrac{z_2}{z_3},z_2\mapsto -\tfrac{z_1}{\z z_3},z_3\mapsto \tfrac{1}{\z z_3}
,z_4\mapsto z_4,z_5\mapsto -\z z_3z_5,z_6\mapsto z_6,\\
g_3&:z_1\mapsto -z_1,z_2\mapsto z_2,z_3\mapsto -z_3,z_4\mapsto \z z_4,z_5\mapsto z_5,z_6\mapsto \eta z_6,\\
g_4&:z_1\mapsto -z_1,z_2\mapsto -z_2,z_3\mapsto z_3,z_4\mapsto z_4,z_5\mapsto \z z_5,z_6\mapsto z_6,\\
g_5&:z_1\mapsto z_1,z_2\mapsto z_2,z_3\mapsto z_3,z_4\mapsto -z_4,z_5\mapsto z_5,z_6\mapsto \z z_6,\\
g_6&:z_1\mapsto z_1,z_2\mapsto z_2,z_3\mapsto z_3,z_4\mapsto z_4,z_5\mapsto -z_5,z_6\mapsto z_6,\\
g_7&:z_1\mapsto z_1,z_2\mapsto z_2,z_3\mapsto z_3,z_4\mapsto z_4,z_5\mapsto z_5,z_6\mapsto -z_6.
\end{align*}

Apply Theorem \ref{thAHK} twice to $k(z_1,z_2,z_3,z_4)(z_5,z_6)$, 
the invariant field 
$k(z_1,z_2,z_3,z_4,z_5,z_6)^G$ is rational over 
$k(z_1,z_2,z_3,z_4)^G$. 
We find that 
$k(z_1,z_2,z_3,z_4)^G
=k(z_1,z_2,z_3,z_4)^{\langle g_1,g_2,g_3,g_4,g_5\rangle}$ 
because $Z(G)=\langle g_6, g_7\rangle$ acts on $k(z_1,z_2,z_3,z_4)$ trivially. 
It suffices to show that 
$k(z_1,z_2,z_3,z_4)^{\langle g_1,g_2,g_3,g_4,g_5\rangle}$ 
is rational over  $\varphi(L_k^{(1)})$.

Define $u_1=z_4^4$, $u_2=\frac{\eta z_2}{z_1z_4^2}$, 
$u_3=\frac{\eta z_4^2}{z_3}$, $u_4=\frac{\z z_1z_2+z_3}{z_1z_2+\z z_3}$.
Note that $\frac{u_4+\z}{u_4-\z}=\frac{\z z_1z_2}{z_3}$. 
By evaluating the determinant of the matrix $M$ 
of exponents as in Case 1 
(see the equation (\ref{eqdet})), we have $\det M=8$, 
$k(z_1,z_2,z_3,z_4)^{\langle g_3,g_4,g_5\rangle}=k(u_1,u_2,u_3,u_4)$ 
and 
\begin{align*}
g_1&:u_1\mapsto \tfrac{1}{u_1},u_2\mapsto u_3,u_3\mapsto u_2,u_4\mapsto \tfrac{1}{u_4},\\
g_2&:u_1\mapsto u_1,u_2\mapsto -\tfrac{1}{u_1u_2},u_3\mapsto -\tfrac{u_1}{u_3},
u_4\mapsto -\tfrac{1}{u_4}.
\end{align*}

Hence the action of $\langle g_1,g_2\rangle$ on 
$k(u_1,u_2,u_3,u_4)$ 
and that of 
$\langle g_1,g_2\rangle$ on $k(u_1,u_2,u_3,u_4)$ as in 
$(\ref{act227})$ of Case $1$: $G=G(2^7,227)$ 
are exactly the same. 
Hence $k(V_{446})^{G(446)}\simeq k(V_{227})^{G(227)}$ and 
$k(V_{446})^{G(446)}$ is rational over $\varphi(L_k^{(1)})$. \\

{Case $8$: $G=G(2^7,950)$ which belongs to $\Phi_{80}$.}

$G=\langle g_1,g_2,g_3,g_4,g_5,g_6,g_7\rangle$ with relations 
$g_1^2=1$, $g_2^2=g_4g_6$, 
$g_3^2=1$, $g_4^2=g_6g_7$, 
$g_5^2=1$, $g_6^2=g_7$, $g_7^2=1$, 
$Z(G)=\langle g_5,g_7\rangle$, 
$[g_2,g_1]=g_4$, $[g_3,g_1]=g_5$, $[g_3,g_2]=g_7$, 
$[g_4,g_1]=g_6$, $[g_6,g_1]=g_7$. 

There exists a faithful representation 
$\rho : G\rightarrow GL(V_{950})\simeq GL_6(k)$ 
of dimension $6$ 
which is decomposable into two irreducible components 
$V_{950}\simeq U_4\oplus U_2$ of dimension $4$ and $2$ respectively. 
By Theorem \ref{thHK}, $k(G)$ is rational over $k(V_{950})^G$. 
We will show that $k(V_{950})^G$ is rational over $\varphi(L_k^{(1)})$. 

The action of $G$ on $k(V_{950})=k(y_1,y_2,y_3,y_4,y_5,y_6)$ 
is given by 
\begin{align*}
g_1&:y_1\mapsto y_3,y_2\mapsto y_4,y_3\mapsto y_1,y_4\mapsto y_2,y_5\mapsto y_6,y_6\mapsto y_5,\\
g_2&:y_1\mapsto \eta^3y_2,y_2\mapsto y_1,y_3\mapsto y_4,y_4\mapsto -\eta y_3,y_5\mapsto y_5,y_6\mapsto y_6,\\
g_3&:y_1\mapsto -y_1,y_2\mapsto y_2,y_3\mapsto -y_3,y_4\mapsto y_4,y_5\mapsto -y_5,y_6\mapsto y_6,\\
g_4&:y_1\mapsto -\eta y_1,y_2\mapsto -\eta y_2,y_3\mapsto \eta^3y_3,y_4\mapsto \eta^3y_4,y_5\mapsto y_5,y_6\mapsto y_6,\\
g_5&:y_1\mapsto y_1,y_2\mapsto y_2,y_3\mapsto y_3,y_4\mapsto y_4,y_5\mapsto -y_5,y_6\mapsto -y_6,\\
g_6&:y_1\mapsto -\z y_1,y_2\mapsto -\z y_2,y_3\mapsto \z y_3,y_4\mapsto \z y_4,y_5\mapsto y_5,y_6\mapsto y_6,\\
g_7&:y_1\mapsto -y_1,y_2\mapsto -y_2,y_3\mapsto -y_3,y_4\mapsto -y_4,y_5\mapsto y_5,y_6\mapsto y_6.
\end{align*}

Define $z_1=\frac{y_1}{y_4}$, $z_2=\frac{y_2}{y_4}$, 
$z_3=\frac{y_3}{y_4}$, $z_4=\frac{y_5}{y_6}$, $z_5=y_4$, $z_6=y_6$. 
Then $k(y_1,y_2,y_3,y_4,y_5,y_6)=k(z_1,z_2,z_3,z_4,z_5,z_6)$ and 
\begin{align*}
g_1&:z_1\mapsto \tfrac{z_3}{z_2},z_2\mapsto \tfrac{1}{z_2},z_3\mapsto \tfrac{z_1}{z_2},z_4\mapsto \tfrac{1}{z_4},
z_5\mapsto z_2z_5,z_6\mapsto z_4z_6,\\
g_2&:z_1\mapsto -\tfrac{\z z_2}{z_3},z_2\mapsto -\tfrac{z_1}{\eta z_3},
z_3\mapsto -\tfrac{1}{\eta z_3},
z_4\mapsto z_4,z_5\mapsto -\eta z_3z_5,z_6\mapsto z_6,\\
g_3&:z_1\mapsto -z_1,z_2\mapsto z_2,z_3\mapsto -z_3,z_4\mapsto -z_4,z_5\mapsto z_5,z_6\mapsto z_6,\\
g_4&:z_1\mapsto -\tfrac{z_1}{\z},z_2\mapsto -\tfrac{z_2}{\z},z_3\mapsto z_3,z_4\mapsto z_4,z_5\mapsto \eta^3z_5,z_6\mapsto z_6,\\
g_5&:z_1\mapsto z_1,z_2\mapsto z_2,z_3\mapsto z_3,z_4\mapsto z_4,z_5\mapsto z_5,z_6\mapsto -z_6,\\
g_6&:z_1\mapsto -z_1,z_2\mapsto -z_2,z_3\mapsto z_3,z_4\mapsto z_4,z_5\mapsto \z z_5,z_6\mapsto z_6,\\
g_7&:z_1\mapsto z_1,z_2\mapsto z_2,z_3\mapsto z_3,z_4\mapsto z_4,z_5\mapsto -z_5,z_6\mapsto z_6.
\end{align*}

Apply Theorem \ref{thAHK} twice to $k(z_1,z_2,z_3,z_4)(z_5,z_6)$, 
the invariant field 
$k(z_1,z_2,z_3,z_4,z_5,z_6)^G$ is rational over 
$k(z_1,z_2,z_3,z_4)^G$. 
We find that 
$k(z_1,z_2,z_3,z_4)^G
=k(z_1,z_2,z_3,z_4)^{\langle g_1,g_2,g_3,g_4,g_6\rangle}$  
because $Z(G)=\langle g_5, g_7\rangle$ acts on $k(z_1,z_2,z_3,z_4)$ trivially. 
It suffices to show that 
$k(z_1,z_2,z_3,z_4)^{\langle g_1,g_2,g_3,g_4,g_6\rangle}$ 
is rational over  $\varphi(L_k^{(1)})$.

Define $u_1=z_4^2$, $u_2=\frac{\omega z_1}{z_2z_4}$, 
$u_3=\omega z_3z_4$, 
$u_4=\frac{\z z_1^2z_2^2-z_3^2}{z_1^2z_2^2-\z z_3^2}$. 
Note that $\frac{u_4+\z}{u_4-\z}=-\frac{\z z_1^2z_2^2}{z_3^2}$. 
By evaluating the determinant of the matrix $M$ 
of exponents as in Case 1 
(see the equation (\ref{eqdet})), we have $\det M=8$, 
$k(z_1,z_2,z_3,z_4)^{\langle g_3,g_4,g_6\rangle}=k(u_1,u_2,u_3,u_4)$ 
and 
\begin{align*}
g_1&:u_1\mapsto \tfrac{1}{u_1},u_2\mapsto u_3,u_3\mapsto u_2,u_4\mapsto \tfrac{1}{u_4},\\
g_2&:u_1\mapsto u_1,u_2\mapsto -\tfrac{1}{u_1u_2},u_3\mapsto -\tfrac{u_1}{u_3},
u_4\mapsto -\tfrac{1}{u_4}.
\end{align*}

Hence the action of $\langle g_1,g_2\rangle$ on 
$k(u_1,u_2,u_3,u_4)$ and 
that of 
$\langle g_1,g_2\rangle$ on $k(u_1,u_2,u_3,u_4)$ as in 
$(\ref{act227})$ of Case $1$: $G=G(2^7,227)$
are exactly the same. 
Hence $k(V_{950})^{G(950)}\simeq k(V_{227})^{G(227)}$ and 
$k(V_{950})^{G(950)}$ is rational over $\varphi(L_k^{(1)})$.\\

{Case $9$: $G=G(2^7,144)$ which belongs to $\Phi_{106}$.}

$G=\langle g_1,g_2,g_3,g_4,g_5,g_6,g_7\rangle$ with relations 
$g_1^2=g_4$, $g_2^2=g_6$, 
$g_3^2=g_6g_7$, $g_4^2=1$, 
$g_5^2=g_7$, $g_6^2=1$, $g_7^2=1$, 
$Z(G)=\langle g_7\rangle$,
$[g_2,g_1]=g_3$, $[g_3,g_1]=g_5$, $[g_3,g_2]=g_6$, 
$[g_4,g_2]=g_5g_6$, $[g_4,g_3]=g_6g_7$, $[g_5,g_1]=g_6$, 
$[g_5,g_2]=g_7$, $[g_5,g_4]=g_7$, $[g_6,g_1]=g_7$. 

Because the center $Z(G)$ of $G$ is cyclic group of order two, 
there exists a faithful irreducible representation 
$\rho : G\rightarrow GL(V_{144})\simeq GL_8(k)$ 
of dimension $8$. 
By Theorem \ref{thHK}, $k(G)$ is rational over $k(V_{144})^G$. 
We will show that $k(V_{144})^G$ is rational over $\varphi(L_k^{(2)})$. 

The action of $G$ on $k(V_{144})=k(y_1,y_2,y_3,y_4,y_5,y_6,y_7,y_8)$ 
is given by 
\begin{align*}
g_1&:y_1 \mapsto y_2, y_2 \mapsto y_3, y_3 \mapsto y_6, y_4 \mapsto \z y_5, y_5 \mapsto y_8, y_6 \mapsto y_1, 
 y_7 \mapsto y_4, y_8 \mapsto -\z y_7,\\
g_2&:y_1 \mapsto -y_4, y_2 \mapsto \z y_5, 
y_3 \mapsto -y_8, y_4 \mapsto y_1, y_5 \mapsto -\z y_2, y_6 \mapsto -\z y_7,
  y_7 \mapsto \z y_6, y_8 \mapsto y_3,\\
g_3&:y_1 \mapsto -y_1, y_2 \mapsto \z y_2, y_3 \mapsto -y_3, y_4 \mapsto y_4, 
y_5 \mapsto \z y_5, y_6 \mapsto -\z y_6, 
 y_7 \mapsto -\z y_7, y_8 \mapsto y_8,\\
g_4&:y_1 \mapsto y_3, y_2 \mapsto y_6, y_3 \mapsto y_1, y_4 \mapsto \z y_8, 
y_5 \mapsto -\z y_7, y_6 \mapsto y_2, 
 y_7 \mapsto \z y_5, y_8 \mapsto -\z y_4,\\
g_5&:y_1 \mapsto -\z y_1, y_2 \mapsto \z y_2, y_3 \mapsto \z y_3, y_4 \mapsto \z y_4, y_5 \mapsto -\z y_5, 
 y_6 \mapsto -\z y_6, y_7 \mapsto \z y_7, y_8 \mapsto -\z y_8,\\
g_6&:y_1 \mapsto -y_1, y_2 \mapsto y_2, y_3 \mapsto -y_3, y_4 \mapsto -y_4, y_5 \mapsto y_5, y_6 \mapsto y_6, 
 y_7 \mapsto y_7, y_8 \mapsto -y_8,\\
g_7&:y_1 \mapsto -y_1, y_2 \mapsto -y_2, y_3 \mapsto -y_3, y_4 \mapsto -y_4, y_5 \mapsto -y_5, y_6 \mapsto -y_6, 
 y_7 \mapsto -y_7, y_8 \mapsto -y_8.
\end{align*}

Define $z_1=\frac{y_1}{y_8}$, $z_2=\frac{y_2}{y_8}$, 
$z_3=\frac{y_3}{y_8}$, $z_4=\frac{y_4}{y_8}$, $z_5=\frac{y_5}{y_8}$, 
$z_6=\frac{y_6}{y_8}$, $z_7=\frac{y_7}{y_8}$. 
Then we obtain that $k(y_1,y_2,y_3,y_4,y_5,y_6,y_7,y_8)$
$=$ $k(z_1,z_2,z_3,z_4,z_5,z_6,z_7,z_8)$ and 
\begin{align*}
g_1&:z_1 \mapsto -\tfrac{z_2}{\z z_7}, 
z_2 \mapsto -\tfrac{z_3}{\z z_7}, z_3 \mapsto -\tfrac{z_6}{\z z_7}, 
 z_4 \mapsto -\tfrac{z_5}{z_7}, z_5 \mapsto -\tfrac{1}{\z z_7}, 
z_6 \mapsto -\tfrac{z_1}{\z z_7}, 
 z_7 \mapsto -\tfrac{z_4}{\z z_7}, 
z_8 \mapsto -\z z_7 z_8,\\
g_2&:z_1 \mapsto -\tfrac{z_4}{z_3}, z_2 \mapsto \tfrac{\z z_5}{z_3}, 
z_3 \mapsto -\tfrac{1}{z_3}, z_4 \mapsto \tfrac{z_1}{z_3}, 
 z_5 \mapsto -\tfrac{\z z_2}{z_3}, z_6 \mapsto -\tfrac{\z z_7}{z_3}, 
z_7 \mapsto \tfrac{\z z_6}{z_3}, z_8 \mapsto z_3 z_8,\\
g_3&:z_1 \mapsto -z_1, z_2 \mapsto \z z_2, z_3 \mapsto -z_3, z_4 \mapsto z_4, z_5 \mapsto \z z_5, z_6 \mapsto -\z z_6, 
 z_7 \mapsto -\z z_7, z_8 \mapsto z_8,\\
g_4&:z_1 \mapsto -\tfrac{z_3}{\z z_4}, z_2 \mapsto -\tfrac{z_6}{\z z_4}, 
z_3 \mapsto -\tfrac{z_1}{\z z_4}, 
z_4 \mapsto -\tfrac{1}{z_4}, z_5 \mapsto \tfrac{z_7}{z_4}, 
z_6 \mapsto -\tfrac{z_2}{\z z_4}, z_7 \mapsto -\tfrac{z_5}{z_4}, 
 z_8 \mapsto -\z z_4 z_8,\\
g_5&:z_1 \mapsto z_1, z_2 \mapsto -z_2, z_3 \mapsto -z_3, z_4 \mapsto -z_4, z_5 \mapsto z_5, z_6 \mapsto z_6, 
 z_7 \mapsto -z_7, z_8 \mapsto -\z z_8,\\
g_6&:z_1 \mapsto z_1, z_2 \mapsto -z_2, z_3 \mapsto z_3, z_4 \mapsto z_4, z_5 \mapsto -z_5, z_6 \mapsto -z_6, 
 z_7 \mapsto -z_7, z_8 \mapsto -z_8,\\
g_7&:z_1 \mapsto z_1, z_2 \mapsto z_2, z_3 \mapsto z_3, z_4 \mapsto z_4, z_5 \mapsto z_5, z_6 \mapsto z_6, z_7 \mapsto z_7,
  z_8 \mapsto -z_8.
\end{align*}

Apply Theorem \ref{thAHK} to $k(z_1,z_2,z_3,z_4,z_5,z_6,z_7)(z_8)$, 
$k(z_1,z_2,z_3,z_4,z_5,z_6,z_7,z_8)^G$ is rational over 
$k(z_1,z_2,z_3,z_4,z_5,z_6,z_7)^G$. 
Because $Z(G)=\langle g_7\rangle$ acts on $k(z_1,z_2,z_3,z_4,z_5,z_6,z_7)$ 
trivially, we have 
$k(z_1,z_2,z_3,z_4,z_5,z_6,z_7)^G
=k(z_1,z_2,z_3,z_4,z_5,z_6,z_7)^{\langle g_1,g_2,g_3,g_4,g_5,g_6\rangle}$. 
It suffices to show that 
$k(z_1,z_2,z_3,z_4,z_5,z_6,z_7)^{\langle g_1,g_2,g_3,g_4,g_5,g_6\rangle}$ 
is rational over  $\varphi(L_k^{(2)})$.

Define 
$u_1 = -\frac{z_2 z_7}{z_5 z_6}$, 
$u_2=-\frac{z_3}{z_2 z_5}$, 
$u_3=\frac{z_3 z_4}{z_1}$, 
$u_4=-\frac{z_7}{z_4 z_6}$,
$u_5=\frac{z_5}{z_3 z_7}$, 
$u_6=\frac{z_2 z_3}{z_6}$, 
$u_7=\frac{z_3 z_6}{z_1 z_7}$. 
By evaluating the determinant of the matrix $M$ 
of exponents as in Case 1 
(see the equation (\ref{eqdet})), we have $\det M=-8$, 
$k(z_1,z_2,z_3,z_4,z_5,z_6,z_7)^{\langle g_3,g_5,g_6\rangle}=k(u_1,u_2,u_3,u_4,u_5,u_6,u_7)$ 
and 
\begin{align*}
g_1&:u_1 \mapsto -u_3, 
u_2 \mapsto -\tfrac{\z}{u_2 u_5 u_6}, 
u_3 \mapsto \tfrac{1}{u_1}, 
u_4 \mapsto -\tfrac{u_1 u_3}{u_6}, 
 u_5 \mapsto \z u_4, u_6 \mapsto -\tfrac{u_7}{\z}, u_7 \mapsto -\tfrac{1}{u_1 u_3 u_5},\\
g_2&:u_1 \mapsto \tfrac{1}{u_1}, u_2 \mapsto -u_2, u_3 \mapsto \tfrac{1}{u_3}, 
u_4 \mapsto u_7, u_5 \mapsto u_6, u_6 \mapsto u_5, u_7 \mapsto u_4,\\
g_4&:u_1 \mapsto -\tfrac{1}{u_1}, u_2 \mapsto -\tfrac{u_2 u_5 u_6}{u_4 u_7}, 
u_3 \mapsto -\tfrac{1}{u_3}, u_4 \mapsto -\tfrac{\z u_3}{u_1 u_7}, 
u_5 \mapsto -\tfrac{\z u_1 u_3}{u_6}, u_6 \mapsto \tfrac{1}{\z u_1 u_3 u_5},
  u_7 \mapsto \tfrac{u_1}{\z u_3 u_4}.
\end{align*}

Note that $g_1^2=g_4$. 
Thus we will omit the presentation of the action $g_4$.
Define 
$v_1=-\bigl(\frac{u_1 + 1}{u_1 - 1}\bigr)\big/\bigl(\frac{u_3 + 1}{u_3 - 1}\bigr)$, 
$v_2=u_2\bigl(\frac{u_1 + 1}{u_1 - 1}\bigr)$,
$v_3=\bigl(\frac{u_3 + 1}{u_3 - 1}\bigr)\bigl(\frac{u_1 + 1}{u_1 - 1}\bigr)$,
$v_4=u_4 + u_7$, 
$v_5=u_5 + u_6$, 
$v_6=\bigl(\frac{u_4 - u_7}{u_4 +u_7}\bigr)\bigl(\frac{u_1 + 1}{u_1 - 1}\bigr)$, 
$v_7=\bigl(\frac{u_5 - u_6}{u_5 + u_6}\bigr)\bigl(\frac{u_1 + 1}{u_1 - 1}\bigr)$.
Then $k(u_1,u_2,u_3,u_4,u_5,u_6,u_7)^{\langle g_2\rangle}=k(v_1,v_2,v_3,v_4,v_5,v_6,v_7)$ 
and 
\begin{align*}
g_1:\ &v_1 \mapsto \tfrac{1}{v_3}, 
v_2 \mapsto \tfrac{4 \z v_1^2 v_3}{v_2 v_5^2 (v_1 v_3 + v_7^2)}, 
v_3 \mapsto v_1, 
v_4 \mapsto -\tfrac{4 v_1 v_3 (v_1 - v_3 + 4 v_1 v_3 - v_1^2 v_3 + v_1 v_3^2 - 2 v_7 + 
2 v_1 v_7 - 2 v_3 v_7 + 2 v_1 v_3 v_7)}{(v_1 + v_3) (v_1 v_3+1) v_5 (v_1 v_3 + v_7^2)},\\ 
\ &v_5 \mapsto \z v_4, 
v_6 \mapsto \tfrac{2 v_1 v_3 - 2 v_1^2 v_3 + 2 v_1 v_3^2 - 2 v_1^2 v_3^2 + v_1 v_7 - 
    v_3 v_7 + 4 v_1 v_3 v_7 - v_1^2 v_3 v_7 + v_1 v_3^2 v_7}{v_3 (v_1 - v_3 + 4 v_1 v_3 - v_1^2 v_3 + v_1 v_3^2 - 2 v_7 + 2 v_1 v_7 - 
      2 v_3 v_7 + 2 v_1 v_3 v_7)}, 
v_7 \mapsto \tfrac{v_6}{v_3}.
\end{align*}

Define 
\begin{align*}
w_1&=\tfrac{v_1 + 1}{v_1 - 1}, 
w_2=\tfrac{v_2 v_5}{v_4}, 
w_3=\tfrac{v_3 + 1}{v_3 - 1}, 
w_4=v_4, 
w_5=v_5,\\
w_6&=-\tfrac{v_3 + 2 v_1 v_3 + v_1^2 v_3 + v_6 + v_1 v_6 - v_3 v_6 - v_1 v_3 v_6}{(v_3-1) (-v_1 + v_1 v_3 + v_6 + v_1 v_6)}, 
w_7=\tfrac{v_1 + 2 v_1 v_3 + 
 v_1 v_3^2 - v_7 + v_1 v_7 - v_3 v_7 + 
 v_1 v_3 v_7}{(v_1-1) (-v_3 + v_1 v_3 - v_7 - v_3 v_7)}.
\end{align*}
Then it follows from 
\begin{align*}
v_1 = \tfrac{w_1 + 1}{w_1 - 1}, 
v_2 = \tfrac{w_2 w_4}{w_5}, 
v_3 = \tfrac{w_3+1}{w_3-1}, 
v_4 = w_4, 
v_5 = w_5, 
v_6 = \tfrac{w_6 - w_1^2 (w_3^2 + w_6-1)}{w_1(w_1-1)(w_3-1)(w_6-1)}, 
v_7 = -\tfrac{w_7+w_3^2(w_1^2 - w_7-1)}{w_3(w_1-1)(w_3-1)(w_7+1)}
\end{align*}
that $k(v_1,v_2,v_3,v_4,v_5,v_6,v_7)=k(w_1,w_2,w_3,w_4,w_5,w_6,w_7)$ and 
\begin{align*}
g_1:&\ w_1 \mapsto -w_3, w_2 \mapsto \tfrac{(w_1+1)(w_7+1)}{(w_1-1)w_2(w_7-1)}, 
 w_3 \mapsto w_1,\\ 
&\ w_4 \mapsto -\tfrac{4 (w_1^2-1)(w_3^2-1) w_3^2 (w_7^2-1)}{(w_1^2 w_3^2-1)w_5 (-w_3^2 + w_1^2 w_3^2 - w_7^2 + w_3^2 w_7^2)}, 
w_5 \mapsto \z w_4, w_6 \mapsto w_7, w_7 \mapsto -w_6.
\end{align*}
We also define 
\begin{align*}
p_1&=w_1, p_2=-w_3, p_3=w_6, p_4=w_7, p_5=(w_1-1)w_2(w_7-1),\\ 
p_6&=-\tfrac{(w_1^2 w_3^2-1) w_5 (w_3^2 - w_1^2 w_3^2+ w_7^2-w_3^2 w_7^2)}
{(w_1-1)w_3}, 
p_7=\tfrac{(w_1^2 w_3^2-1) w_4 (w_1^2 - w_1^2 w_3^2 + w_6^2-w_1^2 w_6^2)\z}
{w_1 (w_3+1)}.
\end{align*}
Then it follows from 
\begin{align*}
w_1 &= p_1,
w_2 = \tfrac{p_5}{(p_1-1) (p_4-1)}, 
w_3 = -p_2,
w_4 = \tfrac{p_1 (p_2-1) p_7}{\z (p_1^2 p_2^2-1) (p_1^2 (p_2^2+p_3^2-1)-p_3^2)},\\ 
w_5 &= -\tfrac{(p_1-1) p_2 p_6}{(p_1^2 p_2^2-1) (p_2^2 (p_1^2 + p_4^2-1)-p_4^2)}, 
w_6 = p_3, 
w_7 = p_4
\end{align*}
that $k(w_1,w_2,w_3,w_4,w_5,w_6,w_7)=k(p_1,p_2,p_3,p_4,p_5,p_6,p_7)$ and 
\begin{align*}
g_1:\ &p_1 \mapsto p_2, p_2 \mapsto -p_1, p_3 \mapsto p_4, p_4 \mapsto -p_3, 
 p_5 \mapsto -\tfrac{(p_1+1)(p_2-1)(p_3+1)(p_4+1)}{p_5},\\ 
\ &p_6 \mapsto p_7, 
p_7 \mapsto -\tfrac{4\z (p_2^2-1)(p_1^2p_2^2-1)(p_4^2-1)
(p_1^2 p_2^2-p_2^2 + p_2^2 p_4^2- p_4^2)}{p_6}.
\end{align*}

Define 
\begin{align*}
X_1&=p_2, 
X_2=-p_1, 
X_3=\tfrac{p_4}{p_2}, 
X_4=\tfrac{p_3}{p_1},
X_5=\tfrac{p_6}{2 \eta (p_2 + 1) (p_4 + 1) p_2},\\
X_6&=-\tfrac{p_7}{2 \eta (p_1-1) (p_3-1) p_1}, 
X_7=\tfrac{p_1 p_2(p_2 - p_3 + p_2 p_3 + p_5-1)}{p_2 - p_3 + p_2 p_3 - p_5-1}. 
\end{align*}
Then it follows from 
\begin{align*}
p_1 &= -X_2, 
p_2 = X_1, 
p_3 = -X_2 X_4, 
p_4 = X_1 X_3, 
p_5 = \tfrac{(X_1-1)(X_2 X_4-1)(X_1 X_2 + X_7)}{X_1 X_2 - X_7},\\
p_6 &= 2\eta X_1(X_1+1)(X_1 X_3+1) X_5, 
p_7 = 2\eta X_2(X_2+1)(X_2 X_4+1) X_6
\end{align*}
that $k(p_1,p_2,p_3,p_4,p_5,p_6,p_7)=k(X_1,X_2,X_3,X_4,X_5,X_6,X_7)$. 
We see that 
the action of $\langle g_1\rangle$ on $k(X_1,X_2,X_3,X_4,X_5,X_6)$ 
and 
that of $\langle\rho\rangle$ in Definition \ref{defL23} (i) 
are exactly the same. 
We also have 
$k(X_1,X_2,X_3,X_4,X_5,X_6,X_7)^{\langle g_1\rangle}=
k(X_1,X_2,X_3,X_4,X_5,X_6)^{\langle g_1\rangle}(X_7)$ 
because $X_7$ is an invariant under the action of $\langle g_1\rangle$.
Hence $k(V_{144})^G$ is rational over $\varphi(L_k^{(2)})$.\\

{Case $10$: $G=G(2^7,138)$ which belongs to $\Phi_{114}$.}

$G=\langle g_1,g_2,g_3,g_4,g_5,g_6,g_7\rangle$ with relations 
$g_1^2=g_4$, $g_2^2=1$, 
$g_3^2=g_6$, $g_4^2=1$, 
$g_5^2=g_7$, $g_6^2=1$, $g_7^2=1$, 
$Z(G)=\langle g_7\rangle$,
$[g_2,g_1]=g_3$, $[g_3,g_1]=g_5$, $[g_3,g_2]=g_6$, 
$[g_4,g_2]=g_5g_6g_7$, $[g_4,g_3]=g_6g_7$, $[g_5,g_1]=g_6$, 
$[g_5,g_2]=g_7$, $[g_5,g_4]=g_7$, $[g_6,g_1]=g_7$.

Because the center $Z(G)$ of $G$ is cyclic group of order two, 
there exists a faithful irreducible representation 
$\rho : G\rightarrow GL(V_{138})\simeq GL_8(k)$ 
of dimension $8$. 
By Theorem \ref{thHK}, $k(G)$ is rational over $k(V_{138})^G$. 
We will show that $k(V_{138})^G$ is rational over $\varphi(L_k^{(2)})$. 

The action of $G$ on $k(V_{138})=k(y_1,y_2,y_3,y_4,y_5,y_6,y_7,y_8)$ 
is given by 
\begin{align*}
g_1&:y_1\mapsto y_5, y_2\mapsto y_1, y_3\mapsto \z y_7, 
y_4\mapsto y_2, y_5\mapsto y_4, y_6\mapsto y_3, y_7\mapsto y_8, y_8\mapsto -\z y_6,\\
g_2&:y_1\mapsto y_3, y_2\mapsto -\z y_6, y_3\mapsto y_1, y_4\mapsto y_8, 
y_5\mapsto -\z y_7, y_6\mapsto \z y_2,  y_7\mapsto \z y_5, y_8\mapsto y_4,\\
g_3&:y_1\mapsto -y_1, y_2\mapsto \z y_2, y_3\mapsto -y_3, y_4\mapsto y_4, 
y_5\mapsto \z y_5, y_6\mapsto -\z y_6, y_7\mapsto -\z y_7, y_8\mapsto y_8,\\
g_4&:y_1\mapsto y_4, y_2\mapsto y_5, y_3\mapsto \z y_8, y_4\mapsto y_1, 
y_5\mapsto y_2, y_6\mapsto \z y_7, y_7\mapsto -\z y_6, y_8\mapsto -\z y_3,\\
g_5&:y_1\mapsto -\z y_1, y_2\mapsto \z y_2, y_3\mapsto \z y_3, y_4\mapsto \z y_4, 
y_5\mapsto -\z y_5, y_6\mapsto -\z y_6, y_7\mapsto \z y_7, y_8\mapsto -\z y_8,\\
g_6&:y_1\mapsto y_1, y_2\mapsto -y_2, y_3\mapsto y_3, y_4\mapsto y_4, 
y_5\mapsto -y_5, y_6\mapsto -y_6, y_7\mapsto -y_7, y_8\mapsto y_8,\\
g_7&:y_1\mapsto -y_1, y_2\mapsto -y_2, y_3\mapsto -y_3, y_4\mapsto -y_4, 
y_5\mapsto -y_5, y_6\mapsto -y_6, y_7\mapsto -y_7, y_8\mapsto -y_8.
\end{align*}

Define $z_1=\frac{y_1}{y_8}$, $z_2=\frac{y_2}{y_8}$, 
$z_3=\frac{y_3}{y_8}$, $z_4=\frac{y_4}{y_8}$, $z_5=\frac{y_5}{y_8}$, 
$z_6=\frac{y_6}{y_8}$, $z_7=\frac{y_7}{y_8}$. 
Then we obtain that $k(y_1,y_2,y_3,y_4,y_5,y_6,y_7,y_8)$
$=$ $k(z_1,z_2,z_3,z_4,z_5,z_6,z_7,z_8)$ and 
\begin{align*}
g_1&:z_1\mapsto - \tfrac{z_5}{\z z_6}, z_2\mapsto - \tfrac{z_1}{\z z_6}, 
z_3\mapsto - \tfrac{z_7}{z_6}, z_4\mapsto - \tfrac{z_2}{\z z_6}, 
z_5\mapsto - \tfrac{z_4}{\z z_6}, z_6\mapsto - \tfrac{z_3}{\z z_6}, 
 z_7\mapsto - \tfrac{1}{\z z_6}, z_8\mapsto -\z z_6 z_8,\\
g_2&:z_1\mapsto  \tfrac{z_3}{z_4}, z_2\mapsto - \tfrac{\z z_6}{z_4}, 
z_3\mapsto  \tfrac{z_1}{z_4}, z_4\mapsto  \tfrac{1}{z_4}, 
 z_5\mapsto - \tfrac{\z z_7}{z_4}, z_6\mapsto  \tfrac{\z z_2}{z_4}, z_7\mapsto  \tfrac{\z z_5}{z_4}, 
z_8\mapsto z_4 z_8,\\
g_3&:z_1\mapsto -z_1, z_2\mapsto \z z_2, z_3\mapsto -z_3, z_4\mapsto z_4, z_5\mapsto \z z_5, z_6\mapsto -\z z_6, 
 z_7\mapsto -\z z_7, z_8\mapsto z_8,\\
g_4&:z_1\mapsto - \tfrac{z_4}{\z z_3}, z_2\mapsto - \tfrac{z_5}{\z z_3}, 
z_3\mapsto - \tfrac{1}{z_3}, z_4\mapsto - \tfrac{z_1}{\z z_3}, 
z_5\mapsto - \tfrac{z_2}{\z z_3}, z_6\mapsto - \tfrac{z_7}{z_3}, z_7\mapsto  \tfrac{z_6}{z_3}, 
 z_8\mapsto -\z z_3 z_8,\\
g_5&:z_1\mapsto z_1, z_2\mapsto -z_2, z_3\mapsto -z_3, z_4\mapsto -z_4, z_5\mapsto z_5, z_6\mapsto z_6, 
 z_7\mapsto -z_7, z_8\mapsto -\z z_8,\\
g_6&:z_1\mapsto z_1, z_2\mapsto -z_2, z_3\mapsto z_3, z_4\mapsto z_4, z_5\mapsto -z_5, z_6\mapsto -z_6, 
 z_7\mapsto -z_7, z_8\mapsto z_8,\\
g_7&:z_1\mapsto z_1, z_2\mapsto z_2, z_3\mapsto z_3, z_4\mapsto z_4, z_5\mapsto z_5, z_6\mapsto z_6, z_7\mapsto z_7,
  z_8\mapsto -z_8.
\end{align*}

Apply Theorem \ref{thAHK} to $k(z_1,z_2,z_3,z_4,z_5,z_6,z_7)(z_8)$, 
$k(z_1,z_2,z_3,z_4,z_5,z_6,z_7,z_8)^G$ is rational over 
$k(z_1,z_2,z_3,z_4,z_5,z_6,z_7)^G$. 
Because $Z(G)=\langle g_7\rangle$ acts on $k(z_1,z_2,z_3,z_4,z_5,z_6,z_7)$ 
trivially, we have 
$k(z_1,z_2,z_3,z_4,z_5,z_6,z_7)^G
=k(z_1,z_2,z_3,z_4,z_5,z_6,z_7)^{\langle g_1,g_2,g_3,g_4,g_5,g_6\rangle}$. 
It suffices to show that 
$k(z_1,z_2,z_3,z_4,z_5,z_6,z_7)^{\langle g_1,g_2,g_3,g_4,g_5,g_6\rangle}$ 
is rational over  $\varphi(L_k^{(2)})$.

Define 
$u_1 = -\frac{z_2 z_7}{z_5 z_6}$, 
$u_2 = -\frac{z_6 z_7}{\z z_3}$, 
$u_3 = \frac{z_3 z_4}{z_1}$, 
$u_4 = -\frac{z_4 z_6}{z_7}$, 
$u_5 = \frac{z_5}{z_3 z_7}$, 
$u_6 = -\frac{\z z_2 z_3}{z_6}$, 
$u_7 = -\frac{z_1 z_7}{\z z_3 z_6}$.
By evaluating the determinant of the matrix $M$ 
of exponents as in Case 1 
(see the equation (\ref{eqdet})), we have $\det M=8$, 
$k(z_1,z_2,z_3,z_4,z_5,z_6,z_7)^{\langle g_3,g_5,g_6\rangle}=k(u_1,u_2,u_3,u_4,u_5,u_6,u_7)$ 
and 
\begin{align*}
g_1&:u_1\mapsto -\tfrac{1}{u_3}, u_2\mapsto -\tfrac{1}{u_2}, u_3\mapsto u_1, u_4\mapsto u_6, u_5\mapsto u_4, u_6\mapsto u_7,
  u_7\mapsto u_5,\\
g_2&:u_1\mapsto \tfrac{1}{u_1}, u_2\mapsto -\tfrac{u_2 u_5 u_6}{u_4 u_7}, u_3\mapsto \tfrac{1}{u_3}, 
 u_4\mapsto -\tfrac{u_1}{\z u_3 u_7}, u_5\mapsto -\tfrac{\z u_1 u_3}{u_6}, u_6\mapsto -\tfrac{\z}{u_1 u_3 u_5}, u_7\mapsto -\tfrac{u_3}{\z u_1 u_4},\\
g_4&:u_1\mapsto -\tfrac{1}{u_1}, u_2\mapsto u_2, u_3\mapsto -\tfrac{1}{u_3}, u_4\mapsto u_7, u_5\mapsto u_6, u_6\mapsto u_5,
  u_7\mapsto u_4.
\end{align*}

Note that $g_1^2=g_4$. 
Thus we will omit the presentation of the action $g_4$. 
Define 
$v_1=\frac{u_1+\z}{u_1-\z}$, 
$v_2=\frac{u_2 u_5 u_6 + u_4 u_7 \z}{u_2 - \z}$, 
$v_3=-\frac{u_3+z}{u_3 - \z}$, 
$v_4=u_4$, 
$v_5=u_6$, 
$v_6=\frac{\z u_1}{u_3 u_7}$, 
$v_7=-\frac{\z}{u_1 u_3 u_5}$.
Then $k(u_1,u_2,u_3,u_4,u_5,u_6,u_7)$ $=$ $k(v_1,v_2,v_3,v_4,v_5,v_6,v_7)$ 
and 
\begin{align*}
g_1:\ &v_1\mapsto v_3, v_2\mapsto -v_2, v_3\mapsto -v_1,\\ 
&v_4\mapsto v_5, 
v_5\mapsto \tfrac{(v_1+1)(v_3+1)\z}{(v_1-1)(v_3-1)v_6}, 
v_6\mapsto v_7, 
v_7\mapsto \tfrac{(v_1-1)(v_3-1)\z}{(v_1+1)(v_3+1)v_4},\\
g_2:\ &v_1\mapsto -\tfrac{1}{v_1}, v_2\mapsto -\tfrac{1}{v_2}, 
v_3\mapsto -\tfrac{1}{v_3}, v_4\mapsto v_6, v_5\mapsto v_7, 
 v_6\mapsto v_4, v_7\mapsto v_5.
\end{align*}

Define 
$w_1=\bigl(\frac{v_1+\z}{v_1-\z}\bigr)^2$,
$w_2=\bigl(\frac{v_1+\z}{v_1-\z}\bigr)\bigl(\frac{v_2+\z}{v_2-\z}\bigr)$,
$w_3=\bigl(\frac{v_1+\z}{v_1-\z}\bigr)\bigl(\frac{v_3+\z}{v_3-\z}\bigr)$, 
$w_4=v_4 + v_6$, 
$w_5=v_5 + v_7, w_6=\bigl(\frac{v_1+\z}{v_1-\z}\bigr)(v_4 - v_6)$,
$w_7=\bigl(\frac{v_1+\z}{v_1-\z}\bigr)(v_5 - v_7)$.
Then 
$k(v_1,v_2,v_3,v_4,v_5,v_6,v_7)^{\langle g_2\rangle}$
$=$ $k(w_1,w_2,w_3,w_4,w_5,w_6,w_7)$ and 
\begin{align*}
g_1:\ &w_1\mapsto \tfrac{w_3^2}{w_1},
w_2\mapsto \tfrac{w_3}{w_2},
w_3\mapsto \tfrac{w_3}{w_1},
w_4\mapsto w_5, 
w_5\mapsto \tfrac{4w_1\left(2 (w_3-1) (w_1 + w_3) w_6 - \z (w_1-w_1^2-4 w_1 w_3-w_3^2 +w_1 w_3^2) w_4\right)}{(w_1+1)(w_1+w_3^2)(w_1w_4^2-w_6^2)},\\
& w_6\mapsto \tfrac{w_3w_7}{w_1}, 
w_7\mapsto \tfrac{4w_3\left(2 w_1 (w_3-1) (w_1 + w_3) w_4-\z(w_1-w_1^2-4 w_1 w_3-w_3^2+w_1 w_3^2) w_6\right)}{(w_1+1)(w_1+w_3^2)(w_1w_4^2-w_6^2)}.
\end{align*}

Define 
\begin{align*}
p_1&=\tfrac{w_1 + w_3}{w_1 - w_3},
p_2=\tfrac{2 w_2}{w_3-1},
p_3=\tfrac{w_3+1}{w_3-1}, 
p_4=\tfrac{2 w_1  (w_1 + w_3)(w_3-1) w_4}{(w_1+1) (w_1 + w_3^2) w_6} 
-\tfrac{(w_1-w_1^2-w_3^2+w_1 w_3^2-4 w_1 w_3) \z}{(w_1+1) (w_1 + w_3^2)},\\
p_5&=-\tfrac{2 w_1 (w_1 - w_3) (w_3+1) w_5}{(w_1+1) (w_1 + w_3^2) w_7} 
+\tfrac{(w_1-w_1^2-w_3^2+w_1 w_3^2+4 w_1 w_3) \z}{(w_1+1) (w_1 + w_3^2)}, 
p_6=w_6, p_7=\tfrac{w_3 w_7}{w_1}.
\end{align*}
Then it follows from 
\begin{align*}
w_1 &= \bigl(\tfrac{p_1+1}{p_1-1}\bigr)\bigl(\tfrac{p_3+1}{p_3-1}\bigr), 
w_2 = \tfrac{p_2}{p_3-1}, 
w_3 = \tfrac{p_3+1}{p_3-1}, 
w_4 = \tfrac{(p_1^2 p_3^2-1) p_4 +\z ( 2 p_1^2 - p_1^2 p_3^2-1)}
{2 p_1 (p_1+1) (p_3+1)}p_6,\\ 
w_5 &= -\tfrac{(p_1^2 p_3^2-1) p_5 +\z (2 p_3^2 - p_1^2 p_3^2-1)}
{2 p_3 (p_1-1) (p_3+1)}p_7,
w_6 = p_6, 
w_7 = \bigl(\tfrac{p_1+1}{p_1-1}\bigr)p_7
\end{align*}
that $k(w_1,w_2,w_3,w_4,w_5,w_6,w_7)=k(p_1,p_2,p_3,p_4,p_5,p_6,p_7)$ 
and 
\begin{align*}
g_1:\ &p_1 \mapsto p_3, 
p_2 \mapsto -\tfrac{(p_1+1)(p_3+1)}{p_2}, 
p_3 \mapsto -p_1, 
p_4 \mapsto p_5, 
p_5 \mapsto -\tfrac{1}{p_4}, 
p_6 \mapsto p_7,\\ 
\ &p_7 \mapsto 
\tfrac{16p_1^2(p_1^2-1)(p_3^2-1)p_4
\left((p_1^2p_3^2-1)(p_4^2-1)-2(2p_1^2-p_1^2p_3^2-1)p_4\z\right)}
{(p_1^2p_3^2-1)\left(2p_4^2(-8p_1^2+8p_1^4+6p_1^2p_3^2-8p_1^4p_3^2+
p_1^4p_3^4+1)+(p_1^2p_3^2-1)^2(p_4^4+1)\right)p_6}.
\end{align*}

We also define 
\begin{align*}
X_1&=p_1,
X_2=p_3, 
X_3=\tfrac{p_4^2+2p_4\z-1}{p_1(p_4^2+1)},
X_4=\tfrac{p_5^2+2p_5\z-1}{p_3(p_5^2+1)},\\
X_5&=\tfrac{(p_1^2p_3^2-1)p_4^3p_6
\left(p_4(4p_1^2-p_1^2p_3^2+p_4^2-p_1^2p_3^2p_4^2-3)
+(p_1^2p_3^2+3p_4^2-4p_1^2p_4^2+p_1^2p_3^2p_4^2-1)\z\right)}
{4\eta p_1^2(p_1+1)(p_3+1)(p_4^2+1)},\\
X_6&=-\tfrac{(p_1^2p_3^2-1)p_5^3p_7
\left(p_5(4p_3^2-p_1^2p_3^2+p_5^2-p_1^2p_3^2p_5^2-3)
+(p_1^2p_3^2+3p_5^2-4p_3^2p_5^2+p_1^2p_3^2p_5^2-1)\z\right)}
{4\eta p_3^2(p_1-1)(p_3+1)(p_5^2+1)},\\
X_7&=\tfrac{p_1p_3\left((p_1+p_2+1)(p_1-p_2+1)-2\z(p_1+1)p_2\right)}{p_1^2+2p_1+p_2^2+1}.
\end{align*}
Then it follows from 
\begin{align*}
p_1&=X_1,
p_2=\tfrac{(X_1+1)(X_7-X_1X_2)\z}{X_7+X_1X_2},
p_3=X_2, 
p_4=\tfrac{X_1X_3+1}{X_1X_3-1}\z,
p_5=\tfrac{X_2X_4+1}{X_2X_4-1}\z,\\
p_6&=-\tfrac{2\eta (X_1+1)(X_2+1)(X_1X_3-1)^4X_5}
{(X_1^2X_2^2-1)(X_1X_3+1)^3(X_2^2-X_3^2+X_1^2X_3^2-1)}, 
p_7=\tfrac{2\eta (X_1-1)(X_2+1)(X_2X_4-1)^4X_6}
{(X_1^2X_2^2-1)(X_2X_4+1)^3(X_1^2-X_4^2+X_2^2X_4^2-1)}
\end{align*}
that $k(p_1,p_2,p_3,p_4,p_5,p_6,p_7)=k(X_1,X_2,X_3,X_4,X_5,X_6,X_7)$. 
We see that 
the action of $\langle g_1\rangle$ on $k(X_1,X_2,X_3,X_4,X_5,X_6)$ 
and that of $\langle\rho\rangle$ in Definition \ref{defL23} (i) 
are exactly the same. 
We also have 
$k(X_1,X_2,X_3,X_4,X_5,X_6,X_7)^{\langle g_1\rangle}=
k(X_1,X_2,X_3,X_4,X_5,X_6)^{\langle g_1\rangle}(X_7)$ 
because $X_7$ is an invariant under the action of $\langle g_1\rangle$.
Hence $k(V_{138})^G$ is rational over $\varphi(L_k^{(2)})$.\\

{Case $11$: $G=G(2^7,1544)$ which belongs to $\Phi_{30}$.}

$G=\langle g_1,g_2,g_3,g_4,g_5,g_6,g_7\rangle$ with relations 
$g_1^2=1$, $g_2^2=1$, 
$g_3^2=1$, $g_4^2=1$, 
$g_5^2=1$, $g_6^2=1$, $g_7^2=1$, 
$Z(G)=\langle g_5,g_6,g_7\rangle$,
$[g_2,g_1]=g_5$, $[g_3,g_1]=g_6$, 
$[g_3,g_2]=g_7$, $[g_4,g_2]=g_5g_6$, 
$[g_4,g_3]=g_5$.


There exists a faithful representation 
$\rho : G\rightarrow GL(V_{1544})\simeq GL_{10}(k)$ 
of dimension $10$ 
which is decomposable into three irreducible components 
$V_{1544}\simeq U_4\oplus U_2\oplus U_4^\prime$ 
of dimension $4$, $2$ and $4$ respectively. 
By Theorem \ref{thHK}, $k(G)$ is rational over $k(V_{1544})^G$. 
We will show that $k(V_{1544})^G$ is rational over $\varphi(L_k^{(3)})$. 

The action of $G$ on $k(V_{1544})=k(y_1,y_2,y_3,y_4,y_5,y_6,y_7,y_8,y_9,y_{10})$ 
is given by 
{\small 
\begin{align*}
g_1&:y_1\mapsto y_3, y_2\mapsto y_4, y_3\mapsto y_1, y_4\mapsto y_2, y_5\mapsto y_5, y_6\mapsto y_6, y_7\mapsto y_9,
  y_8\mapsto y_{10}, y_9\mapsto y_7, y_{10}\mapsto y_8,\\
g_2&:y_1\mapsto y_2, y_2\mapsto y_1, y_3\mapsto y_4, y_4\mapsto y_3, y_5\mapsto y_6, y_6\mapsto y_5, 
 y_7\mapsto -y_8, y_8\mapsto -y_7, y_9\mapsto y_{10}, y_{10}\mapsto y_9,\\
g_3&:y_1\mapsto -y_1, y_2\mapsto -y_2, y_3\mapsto y_3, y_4\mapsto y_4, y_5\mapsto -y_5, y_6\mapsto y_6, 
 y_7\mapsto y_8, y_8\mapsto y_7, y_9\mapsto y_{10}, y_{10}\mapsto y_9,\\
g_4&:y_1\mapsto -y_1, y_2\mapsto y_2, y_3\mapsto -y_3, y_4\mapsto y_4, y_5\mapsto y_5, y_6\mapsto y_6, 
 y_7\mapsto -y_7, y_8\mapsto y_8, y_9\mapsto -y_9, y_{10}\mapsto y_{10},\\
g_5&:y_1\mapsto y_1, y_2\mapsto y_2, y_3\mapsto y_3, y_4\mapsto y_4, y_5\mapsto y_5, y_6\mapsto y_6, 
 y_7\mapsto -y_7, y_8\mapsto -y_8, y_9\mapsto -y_9, y_{10}\mapsto -y_{10},\\
g_6&:y_1\mapsto -y_1, y_2\mapsto -y_2, y_3\mapsto -y_3, y_4\mapsto -y_4, y_5\mapsto y_5, y_6\mapsto y_6, 
 y_7\mapsto y_7, y_8\mapsto y_8, y_9\mapsto y_9, y_{10}\mapsto y_{10},\\
g_7&:y_1\mapsto y_1, y_2\mapsto y_2, y_3\mapsto y_3, y_4\mapsto y_4, y_5\mapsto -y_5, y_6\mapsto -y_6, 
 y_7\mapsto y_7, y_8\mapsto y_8, y_9\mapsto y_9, y_{10}\mapsto y_{10}.
\end{align*}
}

Define $z_1=\frac{y_1}{y_4}$, $z_2=\frac{y_2}{y_4}$, 
$z_3=\frac{y_3}{y_4}$, $z_4=\frac{y_5}{y_6}$, $z_5=\frac{y_7}{y_{10}}$, 
$z_6=\frac{y_8}{y_{10}}$, $z_7=\frac{y_9}{y_{10}}$, $z_8=y_4$, $z_9=y_6$, $z_{10}=y_{10}$. 
Then $k(y_1,y_2,y_3,y_4,y_5,y_6,y_7,y_8,y_9,y_{10})
=k(z_1,z_2,z_3,z_4,z_5,z_6,z_7,z_8,z_9,z_{10})$ and 
{\small 
\begin{align*}
g_1&:z_1\mapsto \tfrac{z_3}{z_2}, z_2\mapsto \tfrac{1}{z_2}, z_3\mapsto \tfrac{z_1}{z_2}, z_4\mapsto z_4, z_5\mapsto \tfrac{z_7}{z_6}, 
 z_6\mapsto \tfrac{1}{z_6}, z_7\mapsto \tfrac{z_5}{z_6}, z_8\mapsto z_2 z_8, z_9\mapsto z_9, z_{10}\mapsto z_{10} z_6,\\
g_2&:z_1\mapsto \tfrac{z_2}{z_3}, z_2\mapsto \tfrac{z_1}{z_3}, z_3\mapsto \tfrac{1}{z_3}, z_4\mapsto \tfrac{1}{z_4}, z_5\mapsto -\tfrac{z_6}{z_7}, 
 z_6\mapsto -\tfrac{z_5}{z_7}, z_7\mapsto \tfrac{1}{z_7}, z_8\mapsto z_3 z_8, z_9\mapsto z_4 z_9, z_{10}\mapsto z_{10} z_7,\\
g_3&:z_1\mapsto -z_1, z_2\mapsto -z_2, z_3\mapsto z_3, z_4\mapsto -z_4, z_5\mapsto \tfrac{z_6}{z_7}, z_6\mapsto \tfrac{z_5}{z_7}, z_7\mapsto \tfrac{1}{z_7}, z_8\mapsto z_8, z_9\mapsto z_9, z_{10}\mapsto z_{10} z_7,\\
g_4&:z_1\mapsto -z_1, z_2\mapsto z_2, z_3\mapsto -z_3, z_4\mapsto z_4, z_5\mapsto -z_5, z_6\mapsto z_6, 
 z_7\mapsto -z_7, z_8\mapsto z_8, z_9\mapsto z_9, z_{10}\mapsto z_{10},\\
g_5&:z_1\mapsto z_1, z_2\mapsto z_2, z_3\mapsto z_3, z_4\mapsto z_4, z_5\mapsto z_5, z_6\mapsto z_6, z_7\mapsto z_7,
  z_8\mapsto z_8, z_9\mapsto z_9, z_{10}\mapsto -z_{10},\\
g_6&:z_1\mapsto z_1, z_2\mapsto z_2, z_3\mapsto z_3, z_4\mapsto z_4, z_5\mapsto z_5, z_6\mapsto z_6, z_7\mapsto z_7,
  z_8\mapsto -z_8, z_9\mapsto z_9, z_{10}\mapsto z_{10},\\
g_7&:z_1\mapsto z_1, z_2\mapsto z_2, z_3\mapsto z_3, z_4\mapsto z_4, z_5\mapsto z_5, z_6\mapsto z_6, z_7\mapsto z_7,
  z_8\mapsto z_8, z_9\mapsto -z_9, z_{10}\mapsto z_{10}.
\end{align*}
}

By applying 
Theorem \ref{thAHK} three times to 
$k(z_1,z_2,z_3,z_4,z_5,z_6,z_7)(z_8,z_9,z_{10})$, 
we obtain that 
$k(z_1,z_2,z_3,z_4,z_5,z_6,z_7,z_8,z_9,z_{10})^G$ is rational over 
$k(z_1,z_2,z_3,z_4,z_5,z_6,z_7)^G$. 
We also find that 
$k(z_1,z_2,z_3,z_4,z_5,z_6,z_7)^G
=k(z_1,z_2,z_3,z_4,z_5,z_6,z_7)^{\langle g_1,g_2,g_3,g_4\rangle}$ 
because $Z(G)=\langle g_5,g_6,g_7\rangle$ acts on $k(z_1,z_2,z_3,z_4,z_5,z_6,z_7)$ 
trivially. 
Hence it suffices to show that 
$k(z_1,z_2,z_3,z_4,z_5,z_6,z_7)^{\langle g_1,g_2,g_3,g_4\rangle}$ 
is rational over  $\varphi(L_k^{(3)})$.

Define $u_1=\frac{z_1}{z_2 z_7}$, $u_2=\frac{z_1}{z_3}$, 
$u_3=\frac{z_1 z_3}{z_2}$, $u_4=z_4$, $u_5=\frac{z_1 z_5}{z_2}$, 
$u_6=\frac{z_1 z_6}{z_2 z_7}$, $u_7=\frac{z_1 z_7}{z_2}$.
By evaluating the determinant of the matrix $M$ 
of exponents as in Case 1 
(see the equation (\ref{eqdet})), we have $\det M=2$, 
$k(z_1,z_2,z_3,z_4,z_5,z_6,z_7)^{\langle g_4\rangle}=k(u_1,u_2,u_3,u_4,u_5,u_6,u_7)$ 
and 
\begin{align*}
g_1&:u_1\mapsto \tfrac{u_3 u_6}{u_1 u_5}, u_2\mapsto \tfrac{1}{u_2}, 
u_3\mapsto u_3, u_4\mapsto u_4, u_5\mapsto \tfrac{u_3}{u_6}, 
 u_6\mapsto \tfrac{u_3}{u_5}, u_7\mapsto \tfrac{u_3 u_5}{u_6 u_7},\\
g_2&:u_1\mapsto \tfrac{1}{u_1}, u_2\mapsto \tfrac{u_2 u_3}{u_1 u_7}, u_3\mapsto \tfrac{1}{u_3}, 
u_4\mapsto \tfrac{1}{u_4}, u_5\mapsto -\tfrac{u_6}{u_1 u_7}, 
u_6\mapsto -\tfrac{u_5}{u_1 u_7}, u_7\mapsto \tfrac{1}{u_7},\\
g_3&:u_1\mapsto u_7, u_2\mapsto -u_2, u_3\mapsto u_3, u_4\mapsto -u_4, u_5\mapsto u_6, u_6\mapsto u_5,  u_7\mapsto u_1.
\end{align*}

Define $v_1=u_1+u_7$, $v_2=u_2u_4$,$v_3=u_3$, $v_4=u_4^2$, 
$v_5=u_5+u_6$, $v_6=u_4(u_5-u_6)$, $v_7=u_4(u_1- u_7)$. 
Then $k(u_1,u_2,u_3,u_4,u_5,u_6,u_7)^{\langle g_3\rangle}=k(v_1,v_2,v_3,v_4,v_5,v_6,v_7)$ 
and 
\begin{align*}
g_1:&\ v_1\mapsto \tfrac{4 v_3 v_4 (v_1 v_4 v_5^2 + v_1 v_6^2 + 2 v_5 v_6 v_7)}{(v_4 v_5^2 - 
     v_6^2) (v_1^2 v_4 - v_7^2)}, v_2\mapsto \tfrac{v_4}{v_2}, v_3\mapsto v_3, v_4\mapsto v_4,\\ 
 &\ v_5\mapsto \tfrac{4 v_3 v_4 v_5}{v_4 v_5^2 - v_6^2}, 
 v_6\mapsto \tfrac{4 v_3 v_4 v_6}{v_4 v_5^2 - v_6^2}, 
 v_7\mapsto -\tfrac{4 v_3 v_4 (2 v_1 v_4 v_5 v_6 + v_4 v_5^2 v_7 + v_6^2 v_7)}{(v_4 v_5^2 - 
      v_6^2) (v_1^2 v_4 - v_7^2)},\\
g_2:&\ v_1\mapsto \tfrac{4 v_1 v_4}{v_1^2 v_4 - v_7^2}, 
v_2\mapsto \tfrac{4 v_2 v_3}{v_1^2 v_4 - v_7^2}, 
 v_3\mapsto \tfrac{1}{v_3}, v_4\mapsto \tfrac{1}{v_4}, 
v_5\mapsto -\tfrac{4 v_4 v_5}{v_1^2 v_4 - v_7^2}, 
 v_6\mapsto \tfrac{4 v_6}{v_1^2 v_4 - v_7^2}, v_7\mapsto -\tfrac{4 v_7}{v_1^2 v_4 - v_7^2}.
\end{align*}

Define 
\begin{align*}
X_1&=\tfrac{1}{v_4},
X_2=\tfrac{1}{v_2},
X_3=\tfrac{v_1^2v_4-v_7^2}{4v_2v_3},
X_4=\tfrac{(v_4v_5^2-v_6^2)(v_1^2v_4-v_7^2)}{2v_4v_5(v_1v_4v_5+v_6v_7)},\\
X_5&=-\tfrac{(v_1v_4v_5^2+v_1v_6^2+2v_5v_6v_7)}{v_1(v_1v_4v_5+v_6v_7)},
X_6=\tfrac{v_4(v_1v_6+v_5v_7)}{v_1v_4v_5+v_6v_7},
X_7=\tfrac{v_6}{v_5}.
\end{align*}
Then it follows from 
\begin{align*}
v_1&=-\tfrac{2(X_4-X_1X_4X_6X_7)}{X_1X_6^2+X_1X_7^2-X_1^2X_6^2X_7^2-1},
v_2=\tfrac{1}{X_2}, 
v_3=\tfrac{X_2X_4^2}{X_1X_3(X_1X_6^2-1)(X_1X_7^2-1)},
v_4=\tfrac{1}{X_1},\\
v_5&=-\tfrac{2(X_1X_4X_5X_6X_7-X_4X_5)}
{X_1X_6^2-X_1X_6X_7+X_1^2X_6^3X_7+X_1X_7^2-X_1^2X_6^2X_7^2+X_1^2X_6X_7^3-X_1^3X_6^3X_7^3-1},\\
v_6&=-\tfrac{2X_4X_5X_7(X_1X_6X_7-1)}{(X_1X_6^2-1)(X_1X_6X_7-X_1X_7^2-X_1^2X_6X_7^3+1)},
v_7=\tfrac{2(X_4X_6-X_4X_7)}{(X_1X_6^2-1)(X_1X_7^2-1)}
\end{align*}
that 
$k(v_1,v_2,v_3,v_4,v_5,v_6,v_7)=k(X_1,X_2,X_3,X_4,X_5,X_6,X_7)$. 
We see that the action of $\langle g_1,g_2\rangle$ on $k(X_1,X_2,X_3,X_4,X_5,X_6,X_7)$ 
and that of $\langle\lambda_1,\lambda_2\rangle$ in Definition \ref{defL23} (ii) 
are exactly the same. 
Hence $k(V_{1544})^{G}$ is rational over $\varphi(L_k^{(3)})$. \qed

\begin{proof}[Proof of Theorem \ref{thm}]
~\\
Take a base field $k$ as $k=\bC$. 
The assertion follows from Theorem \ref{thBB} and Theorem \ref{thmain2}. 
\end{proof}

\section{Classification of groups of order $128$ into $115$ isoclinism families}\label{seTable}
\begin{center}
{\scriptsize 
\begin{tabular}{llr} 
$\Phi_j$&$G(2^7,i),i\in$&\#\\\hline
1&\{1,42,128,159,179,456,837,988,997,1601,2136,2150,2301,2319,2328\}&15\\\hline
2&\{5,43,44,106,129,131,153,160,164,180,181,182,457,458,459,476,480,483,498,501,\\
&838,839,840,843,881,894,899,914,989,990,998,999,1000,1001,1002,1003,1004,1602,1603,1604,\\
&1606,1608,1634,1649,1658,1690,1696,2137,2138,2151,2152,2153,2154,2155,2156,2302,2303,2320,2321,2322\}&60\\\hline
3&\{8,9,10,11,61,69,99,102,108,111,133,154,206,207,208,294,295,296,307,308,\\
&309,464,466,467,469,490,492,493,496,504,506,507,509,848,884,902,1622,1623,1624,1631,\\
&1639,1640,1641,1646,1668,1669,1670,1671,1685,2306,2307,2308,2309\}&53\\\hline
4&\{165,166,167,168,169,183,184,185,186,187,460,461,477,478,481,482,484,485,499,500,\\
&502,503,511,529,549,563,571,574,582,601,648,655,671,686,716,844,845,882,883,895,\\
&896,900,901,915,1009,1010,1011,1012,1013,1014,1015,1016,1017,1018,1019,1020,1021,1022,1023,1024,\\
&1025,1026,1027,1028,1029,1030,1031,1032,1033,1034,1035,1036,1037,1038,1039,1609,1610,1635,1636,1650,\\
&1651,1652,1654,1659,1660,1661,1663,1691,1692,1697,1698,2163,2164,2165,2166,2167,2168,2169,2170,2171,\\
&2172,2173,2174,2175,2176\}&105\\\hline
5&\{1005,1006,1007,1008,1605,1607,1704,1714,1720,2139,2157,2158,2159,2160,2161,2162,2304,2305,2323,2324,\\
&2325\}&21\\\hline
6&\{209,210,211,297,310,311,312,465,468,470,491,494,495,497,505,508,510,849,885,903,1625,1626,1627,\\
&1628,1632,1642,1643,1647,1672,1673,1674,1675,1676,1677,1686,1687,2310,2311,2312\}&
39\\\hline
7&\{12,13,14,15,46,54,109,110,132,188,189,190,191,192,193,471,472,473,474,475,\\
&486,487,488,489,846,847,1613,1614,1617,1618,1619\}&31\\\hline
8&\{63,64,67,103,104,105,112,113,114,868,869,870,874,888,889,890,892,904,905,906,\\
&910,2140,2141,2142,2143\}&25\\\hline
9&\{170,171,172,173,174,175,176,177,178,512,530,550,564,572,575,583,602,649,656,672,\\
&687,717,1116,1117,1118,1119,1120,1121,1122,1123,1124,1125,1126,1127,1128,1129,1130,1131,1132,1133,\\
&1134\}&41\\\hline
10&\{1070,1071,1072,1073,1074,1075,1076,1077,1078,1079,1080,1081,1082,1083,1084,1085,1086,1087,1088,1089,\\
&1090,1091,1092,1093,1094,1095,1096,1097,1098,1099,1100,1101,1102,1103,1104,1105,1106,1612,1638,1656,\\
&1657,1665,1666,1667,1694,1695,1700,1701,1702,1703,1706,1709,1711,1716,1718,1722,1723,1726,2194,2195,\\
&2196,2197,2198,2199,2200,2201,2202,2203,2204,2205,2206,2207,2208,2209,2210,2211,2212,2213,2214,2215\}&80\\\hline
11&\{1040,1041,1042,1043,1044,1045,1046,1047,1048,1049,1050,1051,1052,1053,1054,1055,1056,1057,1058,1059,\\
&1060,1061,1062,1063,1064,1065,1066,1067,1068,1069,1611,1637,1653,1655,1662,1664,1693,1699,1705,1707,\\
&1708,1713,1715,1717,1721,1724,1725,2177,2178,2179,2180,2181,2182,2183,2184,2185,2186,2187,2188,2189,\\
&2190,2191,2192,2193\}&64\\\hline
12&\{6,7,45,107,130,462,463,479,841,842\}&10\\\hline
13&\{1107,1108,1109,1110,1111,1112,1113,1114,1115,1710,1712,1719,1727,2257,2258,2259,2260,2261,2262,2263\}&20\\\hline
14&\{224,225,226,298,299,300,321,322,323,539,540,542,566,567,569,576,577,578,580,624,\\
&625,628,665,666,669,673,674,677,688,689,690,694,700,701,702,706,722,723,726,1779,\\
&1780,1781,1782,1860,1861,1862,1863,1875,1876,1877,1878,1879,1889,1890,1891,1892,1899,1900,1901,1902\}&60\\\hline
15&\{212,213,214,215,216,217,218,219,220,221,222,223,302,303,304,305,306,313,314,315,\\
&316,317,318,319,320,518,519,520,521,534,535,536,537,538,555,556,557,558,584,585,\\
&586,587,594,595,596,597,598,603,604,605,606,607,608,609,610,611,612,650,651,652,\\
&653,654,657,658,659,660,661,679,680,681,682,712,713,714,715,718,719,720,721,1728,\\
&1729,1730,1731,1732,1733,1734,1761,1762,1763,1764,1765,1766,1767,1802,1803,1804,1805,1806,1807,1808,\\
&1817,1818,1819,1820,1821,1822,1823,1832,1833,1834,1835,1836,1837,1838,1839,1840\}&116\\\hline
16&\{227,228,229,301,324,325,326,541,543,568,570,579,581,626,627,629,667,668,670,675,\\
&676,678,691,692,693,695,703,704,705,707,724,725,727,1783,1784,1785,1786,1864,1865,1866,\\
&1867,1880,1881,1882,1893,1894,1903,1904\}&48\\\hline
17&\{2,3,4,26,27,28,29,30,31,32,33,34,35,47,55,62,70,230,231,232,233,234,235,254,255,256,257,258,270,\\
&271,272,273,274,275,276\}&35\\\hline
18&\{1629,1630,1633,1644,1645,1648,1678,1679,1680,1681,1682,1683,1684,1688,1689,2313,2314,2315,2316\}&19\\\hline
19&\{871,872,873,875,891,893,907,908,909,911,2144,2145,2146
\}&13\\\hline
20&\{100,101,115,116,117,886,887\}&7\\\hline
21&\{65,66,68,118,119,120,121,876,877,878,879,880\}&12\\\hline
22&\{48,49,52,58,60,122,123,124,850,851,852\}&11\\\hline
23&\{50,51,53,56,57,59,125,126,127,856,857,858,862,863,864
\}&15\\\hline
24&\{546,547,548,638,639,640,662,663,664,683,684,685,708,709,710,711,1796,1797,1798,1799\}&20\\\hline
25&\{522,523,599,600,621,622,623,1755,1756,1757\}&10\\\hline
\end{tabular}
}

\newpage
{\scriptsize
\begin{tabular}{llr}
$\Phi_j$&$G(2^7,i),i\in$&\#\\\hline
26&\{524,525,526,588,589,590,591,592,593,613,614,615,616,617,618,619,620,1746,1747,1748,\\
&1749,1750\}&22\\\hline
27&\{147,148,149,155,156,157,991,992,993,994\}&10\\\hline
28&\{1400,1401,1402,1403,1404,1405,1406,1407,1408,1409,1410,1411,1412,1413,1414,1415,1416,1417,1418,1419,\\
&1420,1421,1422,1423,1424,1425,1426,1427,1428,1429,1430,1431,1432,1433,1434,1435,1436,1437,1438,1439,\\
&1440,1441,1442,1443,1444,1445,1446,1447,1448,1449,1450,1451,1452,1453,1454,1455,1456,1457,1458,1459,\\
&1460,1461,1462,1463,1464,1465,1466,1467,1468,1469,1470,1471,1472,1473,1474,1475,1476,1477,1478,1479,\\
&1480,1481,1482,1483,1484,1485,1486,1487,1488,1489,1490,1491,1492,1493,1494,1495,1496,1497,1498,1499,\\
&1500,1501,1502,1503,1504,1505,1506,1507,1508,1509,1510,1511,1512,1513,1514,1515,1516,1517,1518,1519,\\
&1520,1521,1522,1523,1524,1525,1526,1527,1528,1529,1530,1531,1532,1533,1534,1535,1536,1537,1538,1539,\\
&1540,1541,1542,1543\}&144\\\hline
29&\{1135,1136,1137,1138,1139,1140,1141,1142,1143,1144,1145,1146,1147,1148,1149,1150,1151,1152,1153,1154,\\
&1155,1156,1157,1158,1159,1160,1161,1162,1163,1164,1165,1166,1167,1168,1169,1170,1171,1172,1173,1174,\\
&1175,1176,1177,1178,1179,1180,1181,1182,1183,1184,1185,1186,1187,1188,1189,1190,1191,1192,1193,1194,\\
&1195,1196,1197,1198,1199,1200,1201,1202,1203,1204,1205,1206,1207,1208,1209,1210,1211,1212,1213,1214,\\
&1215,1216,1217,1218,1219,1220,1221,1222,1223,1224,1225,1226,1227,1228,1229,1230,1231,1232,1233,1234,\\
&1235,1236,1237,1238,1239,1240,1241,1242,1243,1244,1245,1246,1247,1248,1249,1250,1251,1252,1253,1254,\\
&1255,1256,1257,1258,1259,1260,1261,1262,1263,1264,1265,1266,1267,1268,1269,1270,1271,1272,1273,1274,\\
&1275,1276,1277,1278,1279,1280,1281,1282,1283,1284,1285,1286,1287,1288,1289,1290,1291,1292,1293,1294,\\
&1295,1296,1297,1298,1299,1300,1301,1302,1303,1304,1305,1306,1307,1308,1309,1310,1311,1312,1313,1314,\\
&1315,1316,1317,1318,1319,1320,1321,1322,1323,1324,1325,1326,1327,1328,1329,1330,1331,1332,1333,1334,\\
&1335,1336,1337,1338,1339,1340,1341,1342,1343,1344\}&210\\\hline
30&\{1544,1545,1546,1547,1548,1549,1550,1551,1552,1553,1554,1555,1556,1557,1558,1559,1560,1561,1562,1563,\\
&1564,1565,1566,1567,1568,1569,1570,1571,1572,1573,1574,1575,1576,1577\}&34\\\hline
31&\{1345,1346,1347,1348,1349,1350,1351,1352,1353,1354,1355,1356,1357,1358,1359,1360,1361,1362,1363,1364,\\
&1365,1366,1367,1368,1369,1370,1371,1372,1373,1374,1375,1376,1377,1378,1379,1380,1381,1382,1383,1384,\\
&1385,1386,1387,1388,1389,1390,1391,1392,1393,1394,1395,1396,1397,1398,1399\}&55\\\hline
32&\{1578,1579,1580,1581,1582,1583,1584,1585,1586,1587,1588,1589,1590,1591,1592,1593,1594,1595,1596,1597,\\
&1598,1599,1600\}&
23\\\hline
33&\{2264,2265,2266,2267,2268,2269,2270,2271,2272,2273,2274,2275,2276,2277,2278,2279,2280,2281,2282,2283,\\
&2284,2285,2286,2287,2288,2289,2290,2291,2292,2293,2294,2295,2296,2297,2298,2299,2300\}&37\\\hline
34&\{2216,2217,2218,2219,2220,2221,2222,2223,2224,2225,2226,2227,2228,2229,2230,2231,2232,2233,2234,2235,\\
&2236,2237,2238,2239,2240,2241,2242,2243,2244,2245,2246,2247,2248,2249,2250,2251,2252,2253,2254,2255,\\
&2256\}&41\\\hline
35&\{2326,2327\}&2\\\hline
36&\{731,732,733,743,744,745,746,747,748,755,756,757,758,765,766,767,768,773,774,775,\\
&786,787,788,789,790,791,797,798,799,800,803,804,805,806,807,808,809,815,816,817,\\
&818,819,820,824,825,826,827,828,829,831,832,833\}&52\\\hline
37&\{242,243,244,245,246,247,265,266,267,268,269,287,288,289,290,291,292,293\}&18\\\hline
38&\{236,237,238,239,240,241,248,249,250,251,252,253,259,260,261,262,263,264,277,278,\\
&279,280,281,282,283,284,285,286\}&
28\\\hline
39&\{36,37,38,39,40,41\}&6\\\hline
40&\{1996,1997,1998,1999,2000,2001,2002,2003,2004,2005,2006,2007,2008,2009,2010,2038,2039,2040,2041,2042,\\
&2043,2044,2045,2046,2047,2048,2049,2050,2051,2052,2053,2054,2055,2056,2057,2058,2059,2060,2061,2062,\\
&2063,2064,2065,2075,2076,2077,2078,2079,2080,2081,2082,2083,2084,2085,2086,2087,2088,2089,2098,2099,\\
&2100,2101,2102,2103,2104,2105,2106,2107,2108,2109,2116,2117,2118,2119,2120,2121,2122,2129,2130,2131,\\
&2132,2133,2134,2135\}&84\\\hline
41&\{2011,2012,2013,2014,2015,2016,2017,2018,2019,2026,2027,2028,2029,2030,2031,2032,2033,2034,2035,2036,\\
&2037,2066,2067,2068,2069,2070,2071,2072,2073,2074,2090,2091,2092,2093,2094,2095,2096,2097,2110,2111,\\
&2112,2113,2114,2115,2123,2124,2125,2126,2127,2128\}&50\\\hline
42&\{1930,1931,1932,1933,1934,1935,1936,1952,1953,1954,1955,1956,1957,1973,1974,1975,1976,1989,1990,1991,\\
&1992,1993,1994,1995\}&24\\\hline
43&\{1924,1925,1926,1927,1928,1929,1945,1946,1947,1948,1949,1950,1951,1966,1967,1968,1969,1970,1971,1972,\\
&1983,1984,1985,1986,1987,1988\}&26\\\hline
44&\{1918,1919,1920,1921,1922,1923,1937,1938,1939,1940,1941,1942,1943,1944,1958,1959,1960,1961,1962,1963,\\
&1964,1965,1977,1978,1979,1980,1981,1982\}&28\\\hline
45&\{16,17,18,19,20,21,22,23,24,25\}&10\\\hline
\end{tabular}
}

\newpage
{\scriptsize 
\begin{tabular}{llr} 
$\Phi_j$&$G(2^7,i),i\in$&\#\\\hline
46&\{1740,1741,1742,1743,1744,1745,1773,1774,1775,1776,1777,1778,1813,1814,1815,1816,1828,1829,1830,1831,\\
&1850,1851,1852,1853,1854,1855,1856,1857,1858,1859\}&30\\\hline
47&\{1735,1736,1737,1738,1739,1768,1769,1770,1771,1772,1809,1810,1811,1812,1824,1825,1826,1827,1841,1842,\\
&1843,1844,1845,1846,1847,1848,1849\}&27\\\hline
48&\{200,201,202,203,204,205,630,631,632,633,696,697,698,699,728,729,730\}&17\\\hline
49&\{1789,1790,1791,1792,1793,1794,1795,1868,1869,1886,1887,1888,1895,1896,1905,1906,1907,1908,1909,1910,\\
&1911\}&21\\\hline
50&\{544,545,559,565,573,897,898\}&7\\\hline
51&\{1787,1788,1870,1871,1872,1873,1874,1883,1884,1885,1897,1898,1912,1913,1914,1915,1916,1917\}&18\\\hline
52&\{194,195,196,197,198,199,513,514,515,516,517,531,532,533,551,552,553,554\}&18\\\hline
53&\{2317,2318\}&2\\\hline
54&\{1615,1616,1620,1621\}&4\\\hline
55&\{387,388,389,390,391,392,393,394,395,396\}&10\\\hline
56&\{351,352,353,354,355,356,357,358,359,360,361,362,363,364,365,366,367,368,369,370,\\
&371,372,373,374,375,376,377,378,379,380,381,382,383,384,385,386\}&36\\\hline
57&\{327,328,329,330,331,332,333,334,335,336,337,338,339,340,341,342,343,344,345,346,347,348,349,350\}&24\\\hline
58&\{417,418,419,420,421,422,423,424,425,426,427,428,429,430,431,432,433,434,435,436\}&20\\\hline
59&\{397,398,399,400,401,402,403,404,405,406,407,408,409,410,411,412,413,414,415,416\}&20\\\hline
60&\{446,447,448,449,450,451,452,453,454,455\}&10\\\hline
61&\{437,438,439,440,441,442,443,444,445\}&9\\\hline
62&\{750,751,752,777,778,779,783,784,795,796,810,811,821,822
\}&14\\\hline
63&\{734,735,736,737,738,739,759,760,769,770,771,772,792,793
\}&14\\\hline
64&\{763,782,785,812,834,835\}&6\\\hline
65&\{753,754,761,762,776,794\}&6\\\hline
66&\{742,749,823,830\}&4\\\hline
67&\{2020,2021\}&2\\\hline
68&\{2022,2023,2024,2025\}&4\\\hline
69&\{1751,1752,1753,1754\}&4\\\hline
70&\{634,635,636,637\}&4\\\hline
71&\{645,646,647\}&3\\\hline
72&\{1758,1759,1760\}&3\\\hline
73&\{641,642,643,644\}&4\\\hline
74&\{1800,1801\}&2\\\hline
75&\{527,528\}&2\\\hline
76&\{560,561,562\}&3\\\hline
77&\{75,76,77,78,83,84,85,86\}&8\\\hline
78&\{79,80,81,82,92,93,94,95,96,97\}&10\\\hline
79&\{916,917,918,919,920,921,938,939,940,941,942,943,956,957,958,959,960,961,964,965,\\
&966,967,968,969\}&24\\\hline
80&\{950,951,952,975,976,977,982,983,987\}&9\\\hline
81&\{947,948,949,972,973,974,978,979,980,981,984,985,986\}&
13\\\hline
82&\{912,913\}&2\\\hline
83&\{853,854,855\}&3\\\hline
84&\{859,860,861,865,866,867\}&6\\\hline
85&\{2147,2148,2149\}&3\\\hline
86&\{780,781\}&2\\\hline
87&\{836\}&1\\\hline
88&\{801,802\}&2\\\hline
89&\{740,741\}&2\\\hline
90&\{813,814\}&2\\\hline
91&\{764\}&1\\\hline
92&\{934,935\}&2\\\hline
93&\{936,937\}&2\\\hline
94&\{931,932,933\}&3\\\hline
95&\{928,929,930\}&3\\\hline
96&\{71,72,73,74\}&4\\\hline
97&\{98\}&1\\\hline
\end{tabular}
}

\newpage
{\scriptsize 
\begin{tabular}{llr} 
$\Phi_j$ & $G(2^7,i), i\in $ & \#\\\hline
98&\{89,90,91\}&3\\\hline
99&\{87,88\}&2\\\hline
100&\{924,925,926,927\}&4\\\hline
101&\{922,923\}&2\\\hline
102&\{970,971\}&2\\\hline
103&\{944,945,946\}&3\\\hline
104&\{962,963\}&2\\\hline
105&\{953,954,955\}&3\\\hline
106&\{144,145\}&2\\\hline
107&\{140,141,142,143\}\hspace*{12cm}&4\\\hline
108&\{150,151,152\}&3\\\hline
109&\{995,996\}&2\\\hline
110&\{158\}&1\\\hline
111&\{134,135\}&2\\\hline
112&\{136,137\}&2\\\hline
113&\{161,162,163\}&3\\\hline
114&\{138,139\}&2\\\hline
115&\{146\}&1\\\hline
&&2328
\end{tabular}\vspace*{5mm}
}
Table $2$ 
Classification of groups of order $128$ into $115$ isoclinism families
\end{center}

\smallskip
\begin{remark}
For nine groups $G=G(2^6, i)$ which belong to $\Phi_{16}$ 
with $B_0(G)\neq 0$, we have 
\begin{align*}
G(2^6,149)\times C_2\simeq G(2^7,1783),\  
G(2^6,150)\times C_2\simeq G(2^7,1784),\ 
G(2^6,151)\times C_2\simeq G(2^7,1785),\\
G(2^6,170)\times C_2\simeq G(2^7,1864),\ 
G(2^6,171)\times C_2\simeq G(2^7,1865),\ 
G(2^6,172)\times C_2\simeq G(2^7,1866),\\
G(2^6,177)\times C_2\simeq G(2^7,1880),\ 
G(2^6,178)\times C_2\simeq G(2^7,1881),\ 
G(2^6,182)\times C_2\simeq G(2^7,1893). 
\end{align*}
\end{remark}



\vspace*{5mm}
\noindent
Akinari Hoshi\\
Department of Mathematics\\
Niigata University\\
8050 Ikarashi 2-no-cho\\
Nishi-ku, Niigata, 950-2181\\
Japan\\
E-mail: \texttt{hoshi}@\texttt{math.sc.niigata-u.ac.jp}\\
Web: \texttt{http://mathweb.sc.niigata-u.ac.jp/\~{}hoshi/}
\end{document}